\newtheorem{theorem}{Theorem}[section]
\newtheorem{definition}[theorem]{Definition}
\newtheorem{corollary}[theorem]{Corollary}
\newtheorem{proposition}[theorem]{Proposition}
\newtheorem{lemma}[theorem]{Lemma}
\newtheorem{remark}[theorem]{Remark}
\newtheorem{thm}{Theorem}
\newtheorem{cor}[thm]{Corollary}
\theoremstyle{remark}
\newtheorem*{ackn}{Acknowledgment} 
\numberwithin{equation}{section}       
\newcommand{\bK}{\bar{K}}
\newcommand{\hh}{\hat{h}}
\newcommand{\tf}{\tilde{f}}
\newcommand{\tT}{\tilde{T}}
\newcommand{\tX}{\tilde{X}}
\newcommand{\C}{\mathbb{C}}
\newcommand{\N}{\mathbb{N}}
\newcommand{\R}{\mathbb{R}}
\newcommand{\Z}{\mathbb{Z}}
\newcommand{\cC}{\mathcal{C}}
\newcommand{\cCper}{\mathcal{C}_{\mathrm{per}}}
\newcommand{\cCexc}{\mathcal{C}_{\mathrm{exc}}}
\newcommand{\cO}{\mathcal{O}}
\newcommand{\Dper}{D_{\mathrm{per}}}
\newcommand{\Dexc}{D_{\mathrm{exc}}}
\DeclareMathOperator{\Aut}{Aut}
\DeclareMathOperator{\Pic}{Pic}
\DeclareMathOperator{\NS}{NS}
\DeclareMathOperator{\DivGP}{Div}
\DeclareMathOperator{\Ind}{I}
\DeclareMathOperator{\Supp}{Supp}
\DeclareMathOperator{\Div}{div}
\DeclareMathOperator{\Dist}{dist}
\title{On the complex dynamics of birational surface maps defined over number fields}
\author{Mattias Jonsson \and Paul Reschke}
\address{Dept of Mathematics\\
  University of Michigan\\
  Ann Arbor, MI 48109--1043\\
  USA}
\email{mattiasj@umich.edu, preschke@umich.edu}
\date{}
\begin{document}

\setlength{\pdfpagewidth}{8.5in}
\setlength{\pdfpageheight}{11in}

\begin{abstract}
  We show that any birational selfmap of a complex projective surface 
  that has dynamical degree greater
  than one and is defined over a number field automatically satisfies
  the Bedford-Diller energy condition after a suitable birational
  conjugacy.
  As a consequence, the complex dynamics of the map is well-behaved.
  We also show that there is a well-defined canonical height function.
\end{abstract}

\maketitle

\section{Introduction}
Much work in higher-dimensional complex dynamics has centered around
the problem of finding interesting invariant measures. In this paper,
we consider a birational surface map $f$, that we furthermore assume
has dynamical degree $\lambda>1$. 
This setting includes polynomial automorphisms $f\in\Aut(\C^2)$, which
were studied in detail by Bedford-Lyubich-Smillie~\cite{BS1,BS2,BS3,BLS1,BLS2}. 
In this case, there is a unique invariant probability measure $\mu$ of maximal entropy $\log\lambda$. 
It can be constructed as the product $\mu= T^+\wedge T^-$ of positive closed $(1,1)$-currents 
$T^{\pm}$ satisfying $f^*T^+=\lambda T^+$ and 
$f_*T_-=\lambda T^-$, and has many nice properties.
Cantat~\cite{Can-B} later proved analogous results in the case of automorphisms of compact complex surfaces.

A general birational selfmap $f\colon X\dashrightarrow X$ of a compact
complex surface $X$ typically has a nonempty \emph{indeterminacy set}
$\Ind_f$, a finite set where $f$ fails to be defined. Forn{\ae}ss and
Sibony (see~\cite{Sib99}) introduced the important notion of \emph{algebraic
stability}, meaning that the forward orbits of the indeterminacy points
for $f^{-1}$ are well-defined, that is, they don't intersect the
indeterminacy set of~$f$. 

Diller and Favre~\cite{DF} showed that $f$ becomes algebraically stable after a finite number of carefully chosen point blowups. 
They also proved that when $f$ is algebraically stable, it admits positive closed currents $T^\pm$ satisfying the invariance relations above. These currents are unique up to scaling, and it is tempting to define a probability measure $\mu:=T^+\wedge T^-$. However, Buff~\cite{Buff} showed that $T^\pm$ may be supported on a pluripolar set, in which case it is not clear that 
their product is well-defined.

In~\cite{BD-A}, Bedford and Diller introduced a quantitative version of algebraic stability,
\begin{equation*}
  \sum_{k \geq 0} \lambda^{-k} \log \Dist \big(f^k(x),\Ind_f \big) > -\infty
  \quad\text{for every $x\in\Ind_{f^{-1}}$},\tag{BD}
\end{equation*}
under which the product $\mu=T^+\wedge T^-$ is well-behaved and has good properties, as
we shall specify later.
However, examples by Favre~\cite{Fav-A} show that~(BD) may fail.
Further, except for special classes such as polynomial automorphisms, 
it may be difficult to verify whether or not~(BD) holds.

\smallskip
Our main result gives a new large class of examples where the complex dynamics is well
behaved.
\begin{thm}\label{MainThm}
  Let $f:X\dashrightarrow X$ be a birational selfmap of a smooth complex projective surface,
  with dynamical degree $\lambda>1$. 
  Assume that $X$ and $f$ are defined over a number field. 
  Then, up to birational conjugacy, \(f\) satisfies the Bedford-Diller energy condition~(BD).
\end{thm}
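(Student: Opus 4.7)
The plan is to derive the Bedford--Diller condition from the existence of a canonical height. First, by Diller--Favre I may conjugate $f$ birationally and assume it is algebraically stable; then $f^*$ acts on $\NS_{\R}(X)$ with an eigenclass $\theta^+$ satisfying $f^*\theta^+ \equiv \lambda\theta^+$, which (by Galois averaging) I may take to be defined over the number field of definition $K$. Fix a Weil height $h = h_{\theta^+}$ on $X(\bK)$.

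Next, resolve the indeterminacy of $f$ on a blowup $\pi : \tilde X \to X$ so that $\tilde f := f\circ\pi$ is a morphism. Algebraic stability yields an effective exceptional $\mathbb{Q}$-divisor $E$ on $\tilde X$, supported over $\Ind_f$, with $\tilde f^*\theta^+ \equiv \lambda\pi^*\theta^+ + E$ in $\NS(\tilde X)$; the height-change formula then gives
\[
  h(f(y)) \;=\; \lambda h(y) + h_E(\pi^{-1}y) + O(1) \qquad\text{for } y \notin \Ind_f.
\]
Telescoping along the forward orbit of $x\in\Ind_{f^{-1}}$, which is well-defined by algebraic stability, yields
\[
  \lambda^{-n} h(f^n(x)) - h(x) \;=\; \sum_{k=0}^{n-1} \lambda^{-k-1}\bigl(h_E(\pi^{-1}(f^k(x))) + O(1)\bigr),
\]
a sum of essentially non-negative terms once the local heights are normalized so that $\lambda_{E,v}\ge 0$ off $E$. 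At the archimedean place $v_\infty$ coming from the given embedding $K\hookrightarrow\C$, the local contribution equals $-\log\Dist(f^k(x),\Ind_f) + O(1)$; consequently (BD) is equivalent to finiteness of the archimedean piece of this telescoping sum.

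The second step is to construct a canonical height $\hh^+ : X(\bK)\to\R$ satisfying $\hh^+\circ f = \lambda\hh^+$ as a \emph{finite-valued} function, in the style of Silverman and Kawaguchi, via arithmetic intersection theory on an integral model of $X$ over $\mathrm{Spec}\,\cO_K$. Once $\hh^+$ is produced, the identity $\hh^+(x) = h(x) + \sum_{k\ge 0}\lambda^{-k-1}(h_E(\pi^{-1}(f^k(x))) + O(1))$ shows that the full telescoping sum converges; positivity at each place then forces convergence of the archimedean part separately, which is (BD) up to $O(1)$.

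I expect the main obstacle to lie in the non-archimedean half of the construction of $\hh^+$, namely in showing that the local canonical heights $\hh^+_v$ at finite places converge. The plan is to fix an $\cO_K$-model of $(X,f)$ admitting a good resolution of $\Ind_f$ away from a finite set $S$ of bad places. At $v\notin S$ a Tate-type argument uses the bounded $v$-adic denominators of the integral orbit to produce $\hh^+_v$ by a convergent limit, while at the finitely many $v\in S$ a direct Liouville estimate against the Weil-height growth $h(f^k(x)) = O(\lambda^k)$ handles the contribution. Summing the local constructions over all places gives the global canonical height, and the argument closes.
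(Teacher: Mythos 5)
Your high-level strategy — telescope the height increment along the orbit, interpret it as a sum of local heights over places of $K$, and use global convergence plus positivity at each place to isolate the archimedean contribution — is essentially the paper's strategy for its key Theorem~\ref{T101}. However, there are two significant gaps and one sign error.

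First, a sign error: the negativity lemma gives $g^*\theta^+ = \lambda\pi^*\theta^+ - E$ with $E$ effective $\pi$-exceptional (see~\eqref{e102}), not $\lambda\pi^*\theta^+ + E$. Consequently the height increments $h_{\theta^+}(f(y)) - \lambda h_{\theta^+}(y)$ are bounded \emph{above}, not below, and the telescoping sum's terms are essentially non-positive. This is fixable, but it means the difficulty shifts: the issue is to bound the weighted sum from \emph{below}.

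Second and more seriously, you give no valid argument that the telescoping sum converges, i.e.\ that $\hh^+(x) > -\infty$. Since $\theta^+$ is big and nef but not ample, $h_{\theta^+}$ is \emph{not} bounded below on $X(\bK)$; it can be very negative along the base curves $\cC^+$ of $\theta^+$. The paper handles this via the Kawaguchi decomposition $\theta^+ = A + [D]$ together with a careful analysis of the dynamics of the base curves (Theorem~\ref{basecurvesmain} and Lemma~\ref{pointorbits}), which shows that any forward orbit either eventually avoids all curves in $\cC^+$ (whence $h_{\theta^+}$ is bounded below along the orbit) or eventually lies in the periodic family $\cCper$ (whence the orbit never meets $\Ind_f$ and the claim is vacuous). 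None of this appears in your plan. Instead, you propose to establish finiteness of $\hh^+$ by constructing local canonical heights $\hh^+_v$ at every place using an integral model and a Tate-type argument with ``bounded $v$-adic denominators of the integral orbit.'' This is circular: for a birational (non-morphism) map $f$, controlling the growth of $v$-adic denominators along the orbit as it approaches $\Ind_f$ in the special fiber \emph{is} the $v$-adic energy condition you are trying to prove, and Tate's limit argument does not go through in the presence of indeterminacy. The paper in fact proves the energy condition~($\mathrm{BD}_v$) first and only afterward extracts the canonical height, not the other way around.

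Finally, even granting finiteness, your positivity argument needs the local heights $\lambda_{E,v}$ to be uniformly comparable (at the archimedean place) to $-\log\Dist(\cdot,\Ind_f)$, and the various $O(1)$'s to be summable. The paper sidesteps the direct use of the exceptional divisor $E$ and instead works with the decomposition $\theta^+ = \sum r_iB_i + L + [\Dper]$ from Proposition~\ref{newdecomp}, where each $B_i$ and $f^*B_i$ is base point free and $(B_i\cdot f(x))>0$ for $x\in\Ind_f$. This lets one write local heights explicitly via global sections (Proposition~\ref{lochtbd}) and obtain the needed upper bounds at each place, together with the comparison to $\log\Dist_v$ via Lemma~\ref{L102}. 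You would need a substitute for this machinery to make the $E$-based approach rigorous.
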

As a consequence, we get:
\begin{cor}\label{MainCorCurr}
  Let $f:X\dashrightarrow X$ be an algebraically stable birational selfmap of a 
  smooth complex projective surface, with dynamical degree $\lambda>1$. 
  Assume that $X$ and $f$ are defined over a number field.
  Then the currents \(T^+\) and \(T^-\) are laminar, do not charge any pluripolar set,
    and admit a geometric intersection~$\mu$ satisfying \(f_*\mu=\mu\).
\end{cor}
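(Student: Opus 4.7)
The plan is to reduce to a birational model where the Bedford--Diller energy condition (BD) is satisfied, apply the Bedford--Diller theory there to obtain the three claimed properties, and then descend them back to $X$ via the conjugating birational map. By Theorem~\ref{MainThm}, there exist a smooth projective surface $\tX$ and a birational map $\pi\colon \tX \dashrightarrow X$ such that $\tf := \pi^{-1} \circ f \circ \pi$ satisfies (BD). Since (BD) strengthens algebraic stability, the invariant currents $\tT^\pm$ of $\tf$ are well defined on $\tX$ by the Diller--Favre construction recalled in the introduction.

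On the model $\tX$, the theorems of Bedford--Diller that motivate condition (BD) in \cite{BD-A} directly yield all three conclusions for $\tf$: the currents $\tT^\pm$ are laminar, place no mass on any pluripolar set, and their geometric wedge product $\tilde{\mu} := \tT^+ \wedge \tT^-$ is a well-defined probability measure invariant under $\tf$. The remaining task is to transport these properties along $\pi$.

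Since the invariant currents of the algebraically stable map $f$ are unique up to scaling, I can identify $T^\pm$ with positive multiples of $\pi_*\tT^\pm$. Laminarity is a local property, and $\pi$ is a biholomorphism between the complements of the (finite) indeterminacy sets of $\pi$ and $\pi^{-1}$; the laminar structure on $\tX$ therefore transports to $X$ across these Zariski open sets, and nothing is lost on the contracted curves since these are pluripolar and the currents do not charge them. Non-charging of pluripolar sets passes through $\pi_*$ because the preimage of a pluripolar set under $\pi$ differs from a pluripolar set by at most a proper analytic subset of $\tX$, on which $\tT^\pm$ place no mass. Finally, setting $\mu := \pi_* \tilde{\mu}$ and using the relation $\pi_* \circ \tf_* = f_* \circ \pi_*$ (a consequence of $f \circ \pi = \pi \circ \tf$ off the exceptional loci) gives $f_* \mu = \mu$.

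The main obstacle is verifying that $\mu$ genuinely is a geometric intersection of $T^+$ and $T^-$ on $X$, rather than merely an invariant measure obtained by pushforward. This should follow from the fact that the geometric intersection is defined via transverse intersections of holomorphic disks in the laminae, an operation that commutes with the biholomorphism $\pi$ on the open set where it is defined. What remains is controlling contributions from the finitely many exceptional points, and here the non-charging property, now established on $X$, rules out any extraneous mass on the indeterminacy loci.
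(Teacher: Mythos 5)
Your overall strategy --- conjugate to a model where (BD) holds, apply the Bedford--Diller machinery there, and transport back --- is the one the paper uses.  However, there is a genuine gap in the descent step that you hand-wave past, and it is exactly the point at which the paper has to invoke a separate theorem.

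The conjugacy produced by Theorem~\ref{MainThm} is a birational \emph{morphism} $\rho\colon X\to X'$ (a composition of blowdowns, to pass from (AS1) to (AS2) via Proposition~4.1 of~\cite{BD-A}), with $X'$ the ``good'' model.  Proposition~2.8 of~\cite{BD-A} gives $\rho_*T^{\pm}=T^{\pm}(f')$ up to scaling, but this alone does not determine $T^{\pm}$ on $X$: a priori, $T^{\pm}$ could differ from the pullback (strict transform) of $T^{\pm}(f')$ by positive multiples of the $\rho$-exceptional curves.  When you write ``nothing is lost on the contracted curves since these are pluripolar and the currents do not charge them,'' you are implicitly assuming that $T^{\pm}$ on $X$ do not charge pluripolar sets --- but that is exactly one of the three conclusions you are trying to prove, so the argument is circular.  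The information you \emph{do} have a priori is only that $\tT^{\pm}$ (on the good model) do not charge pluripolar sets; you need a separate, non-circular input to control $T^{\pm}$ on the exceptional curves of $\rho$ in $X$.  The paper supplies this via Theorem~1 of Favre~\cite{Fav-B}, which shows that neither $T^+$ nor $T^-$ charges any curve in $\Supp(R_\rho)$.  Once that is in hand, $T^{\pm}$ is identified with the strict transform of $T^{\pm}(f')$ and the laminarity, non-charging, and geometric-intersection statements all descend.  You should also note that ``the theorems of Bedford--Diller'' alone are not quite enough: the non-charging of pluripolar sets and the identification of $T^+\wedge T^-$ as a geometric intersection in this generality come from Dujardin and from Diller--Dujardin--Guedj~\cite{DDG2} (Proposition~2.2 and Theorem~2), not from~\cite{BD-A}.
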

The measure $\mu$ is invariant under birational conjugation and therefore
defined even without assuming algebraic stability.
As the next result shows, it has many nice properties.
\begin{cor}\label{MainCorMeas}
  Let $f:X\dashrightarrow X$ be a birational selfmap of a 
  smooth complex projective surface, with dynamical degree $\lambda>1$. 
  Assume that $X$ and $f$ are defined over a number field.
  Then there exists a unique probability measure \(\mu\) on \(X\) with the following properties:
  \begin{itemize}
  \item[(1)]
    \(\mu\) does not charge any pluripolar set on $X$; in particular, it does not 
    charge any curve or point on \(X\);
  \item[(2)]
    $f_*\mu = \mu$, and $\mu$ is mixing and hyperbolic for \(f\); its Lyapunov
    exponents $\chi^\pm=\chi^\pm(\mu,f)$ satisfy
    \begin{equation*}
      \chi^-<-\frac12\log\lambda
      \quad\text{and}\quad
      \chi^+>\frac12\log\lambda;
    \end{equation*}  
  \item[(3)]
    $h_{\rm{top}}(f) = h_\mu(f) = \log(\lambda)$, so that 
    $\mu$ is a measure of maximal entropy for $f$;
  \item[(4)] 
    the saddle periodic points for $f$ are equidistributed for $\mu$ in the sense that
    \[\lambda^{-n} \sum_{p \in \mathrm{SPer}_n} \delta_p \xrightarrow[n \rightarrow \infty]{} \mu.\]
  \end{itemize}
  Furthermore, (1) and (4) imply that the set of periodic points for \(f\) is Zariski dense in \(X\).
\end{cor}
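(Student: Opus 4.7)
My plan is to reduce to a birational model where $f$ is algebraically stable and satisfies the Bedford--Diller condition~(BD), construct $\mu$ as the geometric intersection of the invariant currents supplied by Corollary~\ref{MainCorCurr} on that model, transport it back to $X$, and then derive the four listed properties from known results on (BD) maps, ending with a short measure-theoretic argument for the Zariski density claim.

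First I would apply Diller--Favre~\cite{DF} to pass to an algebraically stable model, and then Theorem~\ref{MainThm} to further conjugate so that~(BD) holds. On this reduced model Corollary~\ref{MainCorCurr} provides laminar currents $T^\pm$ and their geometric intersection $\nu := T^+ \wedge T^-$, satisfying $f_*\nu = \nu$ and charging no pluripolar set. Because $\nu$ gives zero mass to the exceptional locus of any birational map used in the reduction, it pushes forward unambiguously to a measure $\mu$ on $X$ that is independent of the choices made; this yields~(1) and the invariance in~(2).

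The remaining dynamical properties I would import from the established literature on birational surface maps satisfying~(BD). Specifically, mixing and the entropy identity $h_\mu(f)=h_{\rm top}(f)=\log\lambda$ in~(3) come from Bedford--Diller~\cite{BD-A} combined with the Gromov--Dinh--Sibony upper bound on topological entropy of meromorphic maps; the strict Lyapunov bounds in~(2) come from de Th\'elin's lower bound on Lyapunov exponents of laminar currents, applied to both $T^+$ and $T^-$, together with the Ruelle inequality; and the saddle equidistribution in~(4) is Dujardin's theorem in this setting. Uniqueness of $\mu$ among probability measures with the listed properties is then a consequence of the characterization, also due to Dujardin and Bedford--Diller, of measures of maximal entropy that do not charge pluripolar sets as coinciding with $T^+\wedge T^-$.

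The final Zariski density assertion I would handle directly from~(1) and~(4). Suppose for contradiction that the periodic set of $f$ is contained in a proper Zariski closed subset $Y\subsetneq X$; since $X$ is a surface, $Y$ is a finite union of curves and points and hence pluripolar, so $\mu(Y)=0$ by~(1). On the other hand, all saddle periodic points $\mathrm{SPer}_n$ lie in $Y$, so the approximating measures in~(4) are supported on the closed set $Y$, forcing $\Supp(\mu)\subset Y$ and $\mu(Y)=1$, a contradiction. The main obstacle in the plan is packaging the various results from the literature so that each applies precisely in the generality of algebraically stable birational surface maps satisfying~(BD); once Theorem~\ref{MainThm} is in hand, each ingredient is by now standard, and the only genuinely new content beyond the cited work is the Zariski density conclusion.
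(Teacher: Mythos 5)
Your proposal is correct and follows essentially the same route as the paper: reduce to an algebraically stable model where Theorem~\ref{MainThm} gives~(BD), construct $\mu$ as (the transport of) the geometric intersection $T^+\wedge T^-$ from Corollary~\ref{MainCorCurr}, and import (2)--(4) from Bedford--Diller, Dinh--Sibony, and Dujardin. The one place where you are a bit loose is the transport step: the paper is careful to note that (AS1) is achieved by blowing \emph{up} ($\pi:Y\to X$) while (AS2) is achieved by blowing \emph{down} ($\rho:Y\to Y'$), so the measure is taken as a strict transform under $\rho$ and then a pushforward under $\pi$, rather than a single pushforward, but since the measure charges no pluripolar sets this makes no difference in substance; your explicit argument for Zariski density is also correct and fills in a step the paper leaves implicit.
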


The properties in Corollaries~\ref{MainCorCurr} and~\ref{MainCorMeas} are consequences of the energy condition~(BD), as proved
by Bedford-Diller~\cite{BD-A}, Dujardin~\cite{Duj06} 
and Diller-Dujardin-Guedj~\cite{DDG2}.
Indeed, the currents $T^\pm$ are essentially birational invariants, so
it suffices to work on a birational model where the energy
condition holds. See~\S\ref{S204} for details.

We remark that Junyi Xie has suggested an application of Corollary~\ref{MainCorMeas} to the result (in~\cite{XiePerpts}) that a birational selfmap $f$ on a projective surface defined over any algebraically closed field of characteristic zero must have a Zariski dense set of periodic points if $\lambda(f)>1$: one can combine the final conclusion of Corollary~\ref{MainCorMeas} with a specialization argument for a family of models of $f$ to give a proof which bypasses the use of a theorem due to Hrushovski in the argument in~\cite{XiePerpts}.

Let us emphasize that in condition~(BD) we measure distances to the indeterminacy set.
Estimating the corresponding sum when $\Ind_f$ is replaced by a general finite subset 
defined over $\bar{\mathbb{Q}}$ is probably very difficult: see~\cite{Silverman93}
for related results in one variable.

\smallskip
While we limit ourselves to birational surface maps in this paper, we expect that 
there are other instances where being defined over a number field has good consequences
for the complex dynamics. 
As a simple one-dimensional example, note that Roth's theorem implies that 
a rational selfmap of the Riemann sphere that is defined over a number field
has no Cremer points: see~\cite[\S11]{Milnor}. With regard to potential applications of the methods in this paper to maps in higher dimensions, we remark that De Thelin and Vigny~\cite{dTV10} have extended~(BD) to certain birational maps on $\mathbb{P}^k$ ($k\ge2$) and shown that the condition guarantees nice dynamical properties for these maps as well.

\smallskip
In the setting of Theorem~\ref{MainThm}, let $K$ be a number field over which $f$ and $X$
are defined. In Theorem~\ref{T101} we prove a result that is stronger than~(BD) in two ways. 
First, the summability condition in~(BD) holds
for every rational point $x\in X(K)$ with well-defined forward orbit. 
Second, we can replace the distance with the $v$-adic distance, for every place
$v$ of $K$.
However, we are certainly not aware of any analogue of Corollaries~\ref{MainCorCurr} and~\ref{MainCorMeas} 
for non-Archimedean fields.

\medskip
The main idea in the proof of Theorem~\ref{MainThm} is to exploit local and global heights. 
As a byproduct of our analysis, we find that birational surface maps admit canonical heights.

\begin{thm}\label{HeightThm}
  Let $X$ be a smooth projective surface and $f:X\dashrightarrow X$ a birational selfmap,
  both defined over a number field $K$. Assume that $f$ has 
  dynamical degree $\lambda>1$. Then, up to birational conjugation, the limit 
  \begin{equation*}
    \hh^+(x) := \lim_{n \rightarrow \infty} \lambda^{-n} h_{\theta^+}(f^n(x))
  \end{equation*}
  exists and is finite and non-negative for all points \(x \in X(\bar K)\) with well-defined forward orbit.
  Further, $\hh(x)=0$ unless the Zariski closure of the orbit of $x$ intersects $\Ind_f$.
\end{thm}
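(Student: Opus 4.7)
The plan is to produce $\hh^+$ as a Tate-style telescoping limit, with convergence supplied by the adelic summability of Theorem~\ref{T101}. After a Diller--Favre birational conjugacy, I may assume $f$ is algebraically stable, so $(f^n)^*=(f^*)^n$ on $\NS(X)_\R$ and Perron--Frobenius yields a nef class $\theta^+$ with $f^*\theta^+=\lambda\theta^+$. Fix a Weil height $h_{\theta^+}$ associated to $\theta^+$. The technical heart of the argument is the local-defect estimate
\begin{equation*}
\bigl|\,h_{\theta^+}(f(x))-\lambda\,h_{\theta^+}(x)\,\bigr|
\;\le\;C_1+C_2\sum_{v\in M_K}\log^{+}\Dist_v(x,\Ind_f)^{-1},
\end{equation*}
valid for all $x\in X(\bK)\setminus\Ind_f$.

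To derive this, I would resolve the indeterminacy of $f$ by a birational morphism $\pi\colon\tX\to X$ so that $\tf:=f\circ\pi$ is a morphism. The $\R$-divisors $\tf^*\theta^+$ and $\pi^*(f^*\theta^+)=\lambda\pi^*\theta^+$ differ by an exceptional divisor $E$ with $\Supp(E)\subset\pi^{-1}(\Ind_f)$, so functoriality of the Weil height machine gives
\begin{equation*}
h_{\theta^+}(f(x))\;=\;h_{\tf^*\theta^+}(\tilde x)+O(1)\;=\;\lambda\,h_{\theta^+}(x)+h_{E}(\tilde x)+O(1),
\end{equation*}
where $\tilde x=\pi^{-1}(x)$. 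Bounding $h_{E}(\tilde x)$ by the $v$-adic distances from $\tilde x$ to the components of $\Supp(E)$ and comparing these with $\Dist_v(x,\Ind_f)$ under $\pi$ yields the claimed inequality. Then the telescoping identity
\begin{equation*}
\lambda^{-n}h_{\theta^+}(f^n(x))-h_{\theta^+}(x)=\sum_{k=0}^{n-1}\lambda^{-(k+1)}\bigl[h_{\theta^+}(f^{k+1}(x))-\lambda\,h_{\theta^+}(f^k(x))\bigr]
\end{equation*}
realizes $\hh^+(x)$ as an absolutely convergent series, since Theorem~\ref{T101} controls $\sum_{k}\lambda^{-k}\log^{+}\Dist_v(f^k(x),\Ind_f)^{-1}$ at every place $v$ and only finitely many places contribute nontrivially. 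Non-negativity follows by approximating $\theta^+$ with ample classes $\theta^++\varepsilon H$, whose heights are globally bounded below, dividing by $\lambda^n$, and letting $\varepsilon\to 0$ (using that $\lambda^{-n}h_H(f^n(x))$ remains bounded).

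For the vanishing clause, let $Y$ be the Zariski closure of the forward orbit of $x$ and suppose $Y\cap\Ind_f=\emptyset$. If $\dim Y=0$ the orbit is finite and $h_{\theta^+}$ is bounded on it. If $\dim Y=1$, replace $f$ by a power preserving each irreducible component $C\subseteq Y$; then $f|_C\colon C\to C$, being the restriction of a birational selfmap, is generically injective and hence a birational automorphism of $C$. Such an automorphism preserves degrees on $\Pic(C)$, so the identity $(f|_C)^*(\theta^+|_C)=\lambda\,\theta^+|_C$ combined with $\lambda>1$ forces $\deg(\theta^+|_C)=0$; consequently $h_{\theta^+}$ stays bounded along the orbit and $\hh^+(x)=0$. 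The main obstacle will be the local-defect estimate, where one must track the $v$-adic contributions of each exceptional component of $\pi$ uniformly across every place of $K$; once that is in hand, the rest is a formal manipulation of the telescoping sum using the adelic input of Theorem~\ref{T101}.
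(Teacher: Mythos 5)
Your outline takes a genuinely different route from the paper, but it has two real gaps.

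First, the ``absolute convergence'' step is not established. You want to sum the local-defect estimate over all places $v$ and all times $k$, but Theorem~\ref{T101} only controls $\sum_k\lambda^{-k}\log^+\Dist_v(f^k(x),\Ind_f)^{-1}$ for each \emph{fixed} place $v$, and the proof there uses that the contributions at the other places are bounded \emph{above}, not that they are summable. Your assertion that ``only finitely many places contribute nontrivially'' is false once $k$ varies: for each individual $k$ only finitely many places contribute, but the set of contributing places can grow with $k$, and the inner sum $\sum_v\log^+\Dist_v(f^k(x),\Ind_f)^{-1}$ is essentially a global height (of the exceptional divisor at the lift of $f^k(x)$), which need not be uniformly bounded. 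The paper avoids this entirely: from the one-sided bound (your ``$-h_{[E]}(\tilde x)\le 0$'' inequality, which is Lemma~\ref{nefhtbd}), it applies the Call--Silverman argument to get that the limit exists in $[-\infty,\infty)$, and then proves separately that it is $\ge 0$ (hence finite). The two-sided, adelically summable estimate you want is stronger than what is needed, and what you would need to prove it is missing.

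Second, the non-negativity argument via $\theta^++\varepsilon H$ silently assumes $\lambda^{-n}h_H(f^n(x))$ is bounded in $n$. This is not trivial; it amounts to an arithmetic-degree bound for a birational (non-everywhere-defined) map, and is essentially the same level of difficulty as the boundedness-below of $\lambda^{-n}h_{\theta^+}(f^n(x))$ that you are trying to prove. In fact, establishing it in general requires exactly the kind of orbit analysis the paper performs: decompose $\theta^+ = A+[D]$ with $A$ ample and $D$ effective supported on the base curves $\cC^+$ (Proposition~\ref{Kawdecomp}), and then use Lemma~\ref{pointorbits} to split into cases according to whether the orbit eventually avoids $\Supp(D)$, is preperiodic, or has Zariski closure a periodic cycle of curves in $\cCper$. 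In the last case the paper uses that $f$ restricts to an isomorphism of that cycle, so heights grow at most polynomially along the orbit; this is also where your vanishing-clause reasoning is slightly off --- from $\deg(\theta^+|_C)=0$ you cannot conclude that $h_{\theta^+}$ is \emph{bounded} on $C(\bK)$ (degree-zero line bundles on positive-genus curves have unbounded Weil heights, of order the square root of an ample height). The correct statement is that the height grows sub-exponentially, which still gives $\hh^+(x)=0$ after division by $\lambda^n$. So the vanishing clause is fixable, but the first two points are genuine gaps, and closing them would lead you back to the decomposition and orbit-structure arguments of the paper.
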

Here $\theta^+\in\Pic(X)_\R$ is the unique (up to scaling) nonzero nef
class satisfying $f^*\theta^+=\lambda\theta^+$ and $h_{\theta^+}$ is the corresponding
Weil height.
We have $\hh^+\circ f=\lambda\hh^+$, so it makes sense
to call $\hh^+$ a canonical height for $f$. There is also a 
canonical height $\hh^-$ for $f^{-1}$. These canonical heights generalize  
constructions in~\cite{Silverman91,Silverman94,Kaw06,Kaw08}; see also~\cite{Kaw13,JW}.

Unfortunately, we are unable to rule out the possibility that $\hh^\pm$ are identically
zero. However, if, for example,  
$\hh^+\equiv0$, then we show that $\lambda^{-n}h_A\circ f^n\to0$ on
$X(\bar K)$ for every ample line bundle $A$. 
Since Xie~\cite{XiePerpts} proved that $f$ admits Zariski dense orbits, 
this would contradict a recent conjecture by Silverman~\cite{Sil14}.

\medskip
Let us be somewhat precise about how we use heights to prove the generalized
Bedford-Diller condition~($\mathrm{BD}_v$), with respect to a place $v$ of $K$.
Theorem~\ref{HeightThm} shows that the global height does
not grow faster than $\lambda^n$, but this is not enough for~($\mathrm{BD}_v$),
and also does not seem to be enough for the weaker energy 
conditions considered by Diller, Dujardin and Guedj~\cite{DDG2}.
Instead we proceed as follows.

For simplicity, assume that the invariant class $\theta^+$ is ample and 
fix a point $x\in X(K)$ with well-defined orbit. 
Height considerations imply $\sum_{n=1}^\infty \lambda^{-n} \alpha_n>-\infty$,
where $\alpha_n=h_{\theta^+}(f^n(x))-h_{f^*\theta^+}(f^{n-1}(x))$.
A well chosen decomposition of the heights involved into local heights
gives rise to a decomposition of $\sum_n\lambda^{-n}\alpha_n$ into a sum 
over all the places $v$ of $K$. We show that each term in the decomposition is a finite lower bound for the sum defining the condition~($\mathrm{BD}_v$). 

In general, $\theta^+$ is not ample. To make the argument work, we need to 
understand the dynamics of the \emph{base curves} of $\theta^+$, that is, 
the irreducible curves $C$ orthogonal to $\theta^+$. This is done in
Theorem~\ref{basecurvesmain}.

\medskip
The paper is organized as follows. 
In~\S\ref{S201} we recall facts about the dynamics of birational surface maps, 
heights, and the Bedford-Diller energy condition. 
For the convenience of the reader, we treat the technically simpler case when 
$f:\mathbb{P}^2\dashrightarrow\mathbb{P}^2$ is algebraically stable in~\S\ref{S301}.
In the general case, the invariant classes $\theta^\pm$ are studied in detail in~\S\ref{S202}.
In~\S\ref{S203} we establish the generalized $v$-adic energy condition, as well as 
Theorem~\ref{HeightThm}, while the consequences for 
complex dynamics (Corollaries~B and~C) are treated in~\S\ref{S204}.

\begin{ackn} The authors would like to thank Jeff Diller, Romain Dujardin, Joe Silverman, and the referees for useful comments.
The first author was supported by NSF grant DMS-1266207. The second author was supported by NSF grants DMS-0943832 and DMS-1045119.
\end{ackn}

\section{Background}\label{S201}
Fix a rational surface \(X\) and a birational self-map \(f:X \dashrightarrow X\). In~\S\S\ref{BGactions}--\ref{BGstability}, we can take \(X\) and \(f\) to be defined over an arbitrary algebraically closed field. In~\S\ref{BGenergy}, we take \(X\) and \(f\) to be defined over \(\C\), whereas in~\S\S\ref{BGheights}--\ref{S102} we work over a number field.

When \(D\) is an divisor (resp., $\R$-divisor) on \(X\), we let \([D]\) denote the class of \(D\) in \(\Pic(X) \cong \NS(X)\) (resp., $\Pic(X)_{\R}\simeq\NS(X)_{\R}$). For classes \(\alpha\) and \(\beta\) in \(\Pic(X)_\R\), \((\alpha \cdot \beta)\) denotes the intersection pairing; we also write $(D\cdot\beta):=([D]\cdot\beta)$ 
and $(C\cdot D):=([C]\cdot[D])$ for divisors $C$ and $D$. 
The notation \(\alpha_1 \leq \alpha_2\) for $\alpha_1,\alpha_2\in\Pic(X)_{\R}$ means that \(\alpha_2-\alpha_1\) is pseudoeffective; hence
$(\alpha_1\cdot \beta) \leq (\alpha_2\cdot \beta)$
if \(\alpha_1 \leq\alpha_2\) and \(\beta\in \Pic(X)_\R\) is nef.

\subsection{Pullbacks of line bundles}\label{BGactions}
The following definitions and facts are presented in detail in~\cite{DF} 
and~\cite{DJS}\footnote{While~\textit{loc.\ cit.} only treat complex surfaces, the relevant arguments are valid over any algebraically closed field.}; the statements for \(f\) apply also to every (forward or backward) iterate \(f^n\) (\(n \in \mathbb{Z}\)). 

We let \(\Ind_f\) denote the \textit{indeterminacy set} of \(f\)---i.e., the finite set of points on \(X\) to which \(f\) does not extend as a morphism.
An irreducible curve \(C\) on \(X\) is \textit{\(f\)-exceptional} if \(f(C \smallsetminus \Ind_f)\) is a point; so \(f\) defines an isomorphism on the complement of the union of \(\Ind_f\) with all of the finitely many \(f\)-exceptional curves. 

There is a smooth surface \(Y\) with birational morphisms (i.e., finite compositions of point blow-ups) \(\pi,g:Y \rightarrow X\) such that the following diagram commutes,
\[\xymatrix{
Y \ar[d]_\pi \ar[rd]^g\\
X \ar@{-->}[r]^f &X}\]
i.e., \(f = g \circ \pi^{-1}\). 
The definitions below will not depend on the choice of $(Y,\pi)$, 
but we note that $Y$ can be chosen minimally by successively blowing up the 
indeterminacy points of $f$. In particular
we may assume $\pi$ (resp., $g$) is an isomorphism above $X\smallsetminus\Ind_f$
(resp., above $X\smallsetminus\Ind_{f^{-1}}$).

The pullback and pushforward of a divisor $D$ under $f$ are defined as 
\(f^*D := \pi_*g^*D\) and  \(f_*D:= g_*\pi^*D\), respectively.
Note that $f_*=(f^{-1})^*$.  
These operations preserve effective and nef divisors, and descend to linear maps 
on $\Pic(X)$ that are adjoint under the intersection form in the sense that
\[(f^*\alpha \cdot \beta) = (\alpha \cdot f_*\beta)\]
for any \(\alpha\) and \(\beta\) in \(\Pic(X)_\R\). 
Moreover, it follows from Theorem 3.3 in~\cite{DF} (the ``push-pull formula'') that \(f^*\) is intersection-increasing in the sense that
\[(f^*\alpha \cdot f^*\alpha) \geq (\alpha \cdot \alpha)\]
for any \(\alpha \in \Pic(X)_\R\). 
In particular, if $\alpha$ is big and nef, then so are $f^*\alpha$ and $f_*\alpha$.
By the ``negativity lemma''  (see Theorem 3.3 in~\cite{DF}) we also have
\begin{equation}\label{e102}
  g^*\alpha \leq \pi^*f^*\alpha=\pi^*\pi_*g^*\alpha
\end{equation}
for $\alpha\in\Pic(X)_{\R}$ nef; indeed, 
$\pi^*f^*\alpha-g^*\alpha$ is represented by an effective $\pi$-exceptional
$\R$-divisor.

We let \(R_f\) denote the \textit{ramification divisor} for \(f\)---i.e., the effective divisor on \(X\) characterized by satisfying \([R_f] = K_X-f^*K_X\) and having as its support the union of all of the \(f\)-exceptional curves. Note that \(R_f=\pi_*R_g\), where \(R_g\) is the ramification divisor for \(g\). 

When \(D\) is a prime divisor, the support of \(f^*D\) is in general a union of \(f\)-exceptional curves and possibly a single prime divisor \(D'\) such that the Zariski closure of \(f(D' \smallsetminus \Ind_f)\) is \(D\). 
We define the \textit{strict transform} of \(D\) under \(f\) to be \(f^\#D :=D'\) if \(D'\) exists and \(f^\#D :=0\) otherwise. 

For \(x \in \Ind_f\), we define \(f(x) := g(\pi^{-1}(\{x\}))\)---i.e., the Zariski closure of \((f^{-1})^{-1}(\{x\})\), or, equivalently, the union of all of the \(f^{-1}\)-exceptional curves which map to \(x\) under \(f^{-1}\). Note that \(f(x)\) is a connected and non-empty curve.
We will view it as a nonzero reduced divisor on $X$.

\subsection{Pullbacks of sections}\label{BGsections}
As recalled above, we can pull back line bundles by rational maps.
We will also need to pull back global sections; the next two results 
tell us how to do so.
\begin{proposition}\label{P101}
  Let $L\in\Pic(X)$ and let $\sigma\in H^0(X,L)$ be a nonzero global section.
  Then there exists a section $\sigma'\in H^0(X,f^*L)$, unique up to a 
  multiplicative constant, such that  
  $\Div(\sigma')=f^*\Div(\sigma)$. Furthermore: 
  \begin{itemize}
  \item[(1)]
    if $x\in X\smallsetminus \Ind_f$, then $\sigma'(x)=0$ iff $\sigma(f(x))=0$;
  \item[(2)]
    if $x\in \Ind_f$, $(L\cdot f(x))>0$, and $\Div(\sigma)$ is not supported on any \(f^{-1}\)-exceptional curve, then $\sigma'(x)=0$.
  \end{itemize}
\end{proposition}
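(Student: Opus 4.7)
The plan is to define $\sigma'$ as the Hartogs-type extension of the pullback section from $X\smallsetminus\Ind_f$. Since $f|_{X\smallsetminus\Ind_f}$ is a morphism, the pullback $\tilde\sigma:=(f|_{X\smallsetminus\Ind_f})^*\sigma$ is a regular section of $(f^*L)|_{X\smallsetminus\Ind_f}$ with divisor $(f^*\Div(\sigma))|_{X\smallsetminus\Ind_f}$; because $X$ is smooth and $\Ind_f$ has codimension two, this section extends uniquely to $\sigma'\in H^0(X,f^*L)$, and the divisor identity propagates to all of $X$. Uniqueness up to a scalar will follow because the ratio of two such sections is a global unit on the projective surface $X$, hence a nonzero constant. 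Property~(1) is immediate from the construction on $X\smallsetminus\Ind_f$.

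For~(2) I would pass to the resolution $\pi,g\colon Y\to X$ with $f=g\circ\pi^{-1}$ and compare two natural sections on $Y$: the section $\pi^*\sigma'$ of $\pi^*f^*L$ and the section $g^*\sigma$ of $g^*L$. These line bundles differ by the purely $\pi$-exceptional class $E':=\pi^*f^*\Div(\sigma)-g^*\Div(\sigma)$, so the ratio
\[
u:=g^*\sigma/\pi^*\sigma'
\]
is a nonzero rational function on $Y$ with $\Div(u)=-E'$, supported on $\pi$-exceptional curves. The assumption $\sigma'(x)\neq 0$ will then force $\pi^*\sigma'$ to be the nonzero constant section $\sigma'(x)$ along $F:=\pi^{-1}(x)$, hence nowhere vanishing on $F$.

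To derive a contradiction, I would exploit the hypothesis $(L\cdot f(x))>0$ together with the observation that $(L\cdot C_k)=(\Div(\sigma)\cdot C_k)\geq 0$ for each component $C_k$ of $f(x)$ (the intersection being proper since $C_k$ is $f^{-1}$-exceptional while $\Div(\sigma)$ has no such component). This produces a component $C_{k_0}$ with $(L\cdot C_{k_0})>0$. Picking a non-$g$-exceptional $E_{j_0}\subset F$ with $g(E_{j_0})=C_{k_0}$ and $d_{j_0}:=\deg(g|_{E_{j_0}})>0$, the next step is to rule out $\operatorname{mult}_{E_{j_0}}(E')\neq 0$: a positive multiplicity would mean $u$ has a generic pole along $E_{j_0}$, forcing $\pi^*\sigma'$ to vanish there and contradicting $\sigma'(x)\neq 0$; a negative multiplicity would give $u\equiv 0$ on $E_{j_0}$, whence $g^*\sigma|_{E_{j_0}}\equiv 0$ and, by surjectivity of the finite map $g|_{E_{j_0}}\colon E_{j_0}\to C_{k_0}$, $\sigma|_{C_{k_0}}\equiv 0$, contradicting that $C_{k_0}$ is not a component of $\Div(\sigma)$. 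Hence $u|_{E_{j_0}}$ will be a well-defined nonzero rational function on $E_{j_0}\cong\mathbb{P}^1$, whose divisor has degree
\[
(-E'\cdot E_{j_0})_Y=(g^*L\cdot E_{j_0})-(\pi^*f^*L\cdot E_{j_0})=d_{j_0}(L\cdot C_{k_0})>0
\]
by the projection formula (using $(\pi^*f^*L\cdot E_{j_0})=(f^*L\cdot\pi_*E_{j_0})=0$). This contradicts the fact that a rational function on $\mathbb{P}^1$ has divisor of degree zero. The hard part is this correct setup on $Y$: once $\pi^*\sigma'$ and $g^*\sigma$ are compared via their ratio $u$ and the generic multiplicities along $E_{j_0}$ are controlled, the contradiction reduces to routine intersection-theoretic bookkeeping on a single $\mathbb{P}^1$-fiber component.
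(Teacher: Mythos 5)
Your proof of existence, uniqueness, and (1) matches the paper's: define $\sigma'$ by extending the pullback of $\sigma$ from $X\smallsetminus\Ind_f$ across the codimension-two set $\Ind_f$, with uniqueness since the ratio of two candidates is a global unit. The paper's proof of (2), however, is much shorter: it simply observes that the hypotheses force $x$ to lie in the Zariski closure of $f^{-1}(\Supp\Div(\sigma))$, which is contained in $\Supp(f^*\Div(\sigma))=\Supp(\Div(\sigma'))$.

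Your argument for (2) has a genuine gap. You declare $u=g^*\sigma/\pi^*\sigma'$ to be a rational \emph{function} on $Y$ with $\Div(u)=-E'$. But $g^*\sigma$ and $\pi^*\sigma'$ are sections of the line bundles $g^*L$ and $\pi^*f^*L$ respectively, which are not isomorphic when $E'\neq 0$; so $u$ is a rational \emph{section} of $\cO_Y(g^*L-\pi^*f^*L)\cong\cO_Y(-E')$, not a rational function. (Indeed, if $u$ were a rational function then $-E'$ would be a principal divisor, which fails whenever $E'\neq 0$.) Consequently, $u|_{E_{j_0}}$ is a rational section of $\cO_Y(-E')|_{E_{j_0}}$, a line bundle of degree $(-E'\cdot E_{j_0})=d_{j_0}(L\cdot C_{k_0})>0$ on $E_{j_0}\cong\mathbb{P}^1$. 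The divisor of a rational section of a positive-degree line bundle on $\mathbb{P}^1$ has positive degree; there is no contradiction. In fact your own multiplicity analysis shows that $u|_{E_{j_0}}$ is a regular, not-identically-zero section, and the positive-degree zeros are exactly $(g^*\Div(\sigma))\cap E_{j_0}$, which is perfectly consistent.

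The gap can be repaired without $u$ at all. Since $\Div(\sigma)$ has no $f^{-1}$-exceptional component, for every $\pi$-exceptional curve $E$ on $Y$ one has $(g^*\Div(\sigma)\cdot E)=(\Div(\sigma)\cdot g_*E)\geq 0$; by the negativity lemma (exactly as in the paper's~\eqref{e102}, but using the effectivity hypothesis on $\Div(\sigma)$ in place of nefness), $E'=\pi^*f^*\Div(\sigma)-g^*\Div(\sigma)$ is effective. Your computation $(E'\cdot E_{j_0})=-d_{j_0}(L\cdot C_{k_0})<0$ then forces $E_{j_0}$ to be a component of $E'$, hence $E_{j_0}\subset\Supp(\pi^*f^*\Div(\sigma))$, hence $x=\pi(E_{j_0})\in\Supp(\Div(\sigma'))$. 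This yields $\sigma'(x)=0$ without ever assuming $\sigma'(x)\neq 0$. As written, however, your proof is missing the effectivity of $E'$ (or an equivalent input), and the stated contradiction about degree-zero principal divisors is incorrect.
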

\begin{proof}
  Write $L':=f^*L$. 
  Uniqueness is clear: if the divisor of $\sigma'_i\in H^0(X,L')$ is equal to 
  $f^*\Div(\sigma)$ for $i=1,2$, then $\sigma'_1/\sigma'_2$ is a rational function 
  on $X$ without zeros or poles, and hence constant.

  The prove existence, set $D:=\Div(\sigma)$ and note that $D':=f^*D$ is an 
  effective divisor representing $L'$. We can find a section $\sigma'$ of $L'$
  on $X\smallsetminus\Ind_f$ such that $\Div(\sigma')=D'$.
  Since $\Ind_f$ has codimension two, $\sigma'$ extends as a global section of $L'$
  still satisfying $\Div(\sigma')=D'$.

  Finally,~(1) follows immediately from 
  $\Div(\sigma')=f^*\Div(\sigma)$, and~(2) holds since the assumptions imply that $x$ is in the Zariski closure of $f^{-1}(\Supp(D))$.
\end{proof}
\begin{lemma}\label{L103}
  If $\sigma,\tau\in H^0(X,L)$ are nonzero sections, 
  with associated sections $\sigma',\tau'\in H^0(X,f^*L)$ given by 
  Proposition~\ref{P101}, then the function
  \begin{equation*}
    x\mapsto\frac{\tau(f(x))/\sigma(f(x))}{\tau'(x)/\sigma'(x)}
  \end{equation*}
  on $X\setminus(\Ind_f\cup\mathrm{Supp}(\Div(\sigma'\tau')))$ is a nonzero constant.
\end{lemma}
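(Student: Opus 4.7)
The plan is to recognize the expression as the quotient of two nonzero rational functions on $X$, show that these have the same divisor, and then conclude that their ratio is a nonzero constant since $X$ is a complete variety.

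First, I would set $\phi := \tau/\sigma$, a nonzero rational function on $X$ with $\Div(\phi) = \Div(\tau) - \Div(\sigma)$, so that the numerator in the statement is the value at $x$ of the rational function $\phi \circ f$ on $X$ obtained by pulling back $\phi$ via the birational map $f$. Similarly, $\tau'/\sigma'$ is a nonzero rational function on $X$ with $\Div(\tau'/\sigma') = \Div(\tau') - \Div(\sigma')$. The problem thus reduces to showing $\Div(\phi \circ f) = \Div(\tau'/\sigma')$.

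For the right-hand side, Proposition~\ref{P101} gives $\Div(\sigma') = f^*\Div(\sigma)$ and $\Div(\tau') = f^*\Div(\tau)$, so by linearity $\Div(\tau'/\sigma') = f^*\Div(\phi)$. For the left-hand side, I would pass to a resolution $\pi, g : Y \to X$ with $g = f \circ \pi$. Then $\pi^*(\phi \circ f) = \phi \circ f \circ \pi = \phi \circ g = g^*\phi$ as rational functions on $Y$, and hence $\pi^*\Div(\phi \circ f) = g^*\Div(\phi)$ on $Y$; applying $\pi_*$ and using that $\pi$ is birational (so that $\pi_* \Div(\psi) = \Div(\psi)$ for any rational function $\psi$) gives $\Div(\phi \circ f) = \pi_* g^*\Div(\phi) = f^*\Div(\phi)$ on $X$. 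This matches the right-hand side.

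Combining, the ratio $(\phi \circ f)/(\tau'/\sigma')$ is a rational function with trivial divisor on the complete variety $X$ and is therefore a nonzero constant, which is exactly the claim on the specified open set (where both functions are defined and nonvanishing as literal numbers). The main obstacle is the identification $\Div(\phi \circ f) = f^*\Div(\phi)$, which requires reconciling two a priori different notions of pullback---the divisorial $f^* = \pi_* g^*$ defined through the resolution, and the divisor of the composition $\phi \circ f$ as a rational function through the birational map. The resolution argument above handles this cleanly, but it is the only nonformal step; the remainder is linear algebra of divisors together with Proposition~\ref{P101}.
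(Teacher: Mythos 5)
Your proof is correct and follows essentially the same route as the paper's. Both arguments work on the resolution $Y$ and exploit that the discrepancy between the ``naive'' pullback and the divisorial pullback $f^*=\pi_*g^*$ is $\pi$-exceptional and hence pushes forward to zero on $X$; the paper packages this by forming $\tilde\sigma=\pi^*\sigma'/g^*\sigma$ and $\tilde\tau=\pi^*\tau'/g^*\tau$ on $Y$, observes that $\tilde\tau/\tilde\sigma$ descends to a rational function on $X$ regular and nonvanishing outside the codimension-two set $\Ind_f$, and concludes by Hartogs-type extension, whereas you go straight to the identity $\Div(\phi\circ f)=\pi_*\pi^*\Div(\phi\circ f)=\pi_*g^*\Div(\phi)=f^*\Div(\phi)$ and compare divisors. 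Your version is somewhat cleaner because it makes explicit the only nonformal step, namely that the two notions of pullback of a principal divisor agree, and it avoids the (harmless but slightly distracting) claim that $\tilde\sigma,\tilde\tau$ are global sections of $M$ rather than merely rational sections with $\pi$-exceptional divisors.
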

\begin{proof}
  Note that $\tilde\sigma:=\pi^*\sigma'/g^*\sigma$ 
  and $\tilde\tau:=\pi^*\tau'/g^*\tau$ both 
  define global sections of the line bundle $M:=\pi^*f^*L-g^*L$ on
  $Y$ whose associated divisor has $\pi$-exceptional support.
  Thus $\tilde\tau/\tilde\sigma$ defines a rational function on $X$
  (and on $Y$) that is regular and nonvanishing outside $\Ind_f$. Since the latter set
  is of codimension two, $\tilde\tau/\tilde\sigma$ must be constant,
  which concludes the proof.
\end{proof}

\subsection{Algebraic stability}\label{BGstability} By Theorem 0.1 in~\cite{DF}, we can successively blow up points on \(X\) to conjugate \(f\) to a map, called \textit{algebraically stable}, satisfying
\vspace{6pt}
\\
\indent \indent (AS1) \indent \indent \(f^k(\Ind_{f^{-1}}) \cap \Ind_f = \emptyset\) for every \(k \geq 0\).
\vspace{6pt}
\\
Condition (AS1) is equivalent to the property that \((f^n)^* = (f^*)^n\) for every \(n \in \mathbb{Z}\). We assume henceforth that \(f\) satisfies (AS1). The spectral radius of $f^*$ acting on $\Pic(X)_\R$ is then independent of $X$
and equal to\footnote{The dynamical degree can be defined and studied 
  independently of the choice of an algebraically stable model: see e.g.~\cite{RuSh,Friedland,deggrowth}.}
 the (first) \textit{dynamical degree} $\lambda=\lambda(f)$ of \(f\).
We have $\lambda(f^n)=\lambda^{|n|}$ for every $n\in\mathbb{Z}$.

We assume henceforth that \(\lambda>1\). Then Theorem 0.3 in~\cite{DF} shows that 
there are nef classes \(\theta^+=\theta^+(f)\) and \(\theta^-=\theta^-(f)\) in \(\Pic(X)_\R\), unique up to scaling, such that \(f^*\theta^+ = \lambda \theta^+\), \(f_*\theta^- = \lambda \theta^-\), and \((\theta^+ \cdot \theta^-) >0\). Furthermore, by Theorem 0.5 in~\cite{DF}, for any $\alpha\in\Pic(X)_\R$ we have
\begin{equation}\label{e101}
  \lim_{n\to\infty}\lambda^{-n}f^{n*}\alpha
  =\frac{(\alpha\cdot\theta^-)}{(\theta^+\cdot\theta^-)}\theta^+.
\end{equation}
Note that (AS1) is satisfied by every iterate \(f^n\) (\(n \in \mathbb{Z}\)). Up to scaling, we have  $\theta^{\pm}(f^{-1})=\theta^{\mp}(f)$ and \(\theta^{\pm}(f^n)=\theta^{\pm}(f)\) for $n \in \N$. 

By Proposition 4.1 in~\cite{BD-A}, we can successively blow down curves on \(X\) to conjugate \(f\) to a map which maintains (AS1) and also satisfies
\vspace{6pt}
\\
\indent \indent (AS2) \indent \indent \((\theta^+ \cdot f(x)) > 0\) for every \(x \in \Ind_f\) and \((\theta^- \cdot f^{-1}(x)) > 0\) for every \(x \in \Ind_{f^{-1}}\).
\vspace{6pt}
\\
We assume henceforth that \(f\) satisfies both~(AS1) and~(AS2).

\begin{proposition}
Every iterate \(f^n\) (\(n \in \mathbb{Z}\)) also satisfies (AS2).
\end{proposition}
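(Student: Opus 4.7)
The plan is to proceed by induction on $n$. Because $\theta^{\pm}(f^{-1}) = \theta^{\mp}(f)$ up to scaling and because the two clauses of~(AS2) for $f^n$ and for $f^{-n}$ are exchanged when $f$ is replaced by $f^{-1}$, it suffices to show that $(\theta^+ \cdot f^n(x)) > 0$ for every $n \geq 1$ and every $x \in \Ind_{f^n}$; the negative-iterate case then follows by running the same argument on $f^{-1}$. The case $n = 1$ is~(AS2) itself, so I assume the claim for $f, f^2, \ldots, f^n$ and establish it for $f^{n+1}$.

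The first move is to reduce, via~(AS1), to the case $x \in \Ind_f$. Given $x \in \Ind_{f^{n+1}}$, let $k \in \{0, 1, \ldots, n\}$ be the smallest integer such that $f^k$ is a morphism near $x$ and $y := f^k(x) \in \Ind_f$. When $k \geq 1$, the indeterminacy of $f^{n+1}$ at $x$ comes purely from that of $f^{n+1-k}$ at $y$, so $f^{n+1}(x) = f^{n+1-k}(y)$; since $y \in \Ind_f \subseteq \Ind_{f^{n+1-k}}$ and $n+1-k \leq n$, the inductive hypothesis finishes this case. It remains to treat $k = 0$, i.e., $x \in \Ind_f$.

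For such $x$, I factor $f^{n+1} = f^n \circ f$ and resolve in two stages: first resolve $f$ over $x$ to produce the curve $f(x)$, then resolve $f^n$ along $f(x)$ by blowing up the finitely many points in $f(x) \cap \Ind_{f^n}$. Tracing the fiber of $\pi_{n+1}$ over $x$ and pushing forward by $g_{n+1}$ should yield the identity of reduced divisors
\[
f^{n+1}(x) \;=\; \bigcup_{\substack{C \subset f(x) \\ C \text{ not } f^n\text{-exceptional}}} \overline{f^n(C)} \;\;\cup\;\; \bigcup_{p \in f(x) \cap \Ind_{f^n}} f^n(p),
\]
where $C$ ranges over irreducible components and $\overline{f^n(C)}$ denotes the closure of $f^n(C \smallsetminus \Ind_{f^n})$. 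From here I case-split. If $f(x) \cap \Ind_{f^n} \neq \emptyset$, pick any $p$ there; the inductive hypothesis gives $(\theta^+ \cdot f^n(p)) > 0$, and since $f^n(p)$ is a subdivisor of $f^{n+1}(x)$ and $\theta^+$ is nef, $(\theta^+ \cdot f^{n+1}(x)) > 0$. Otherwise $f(x) \cap \Ind_{f^n} = \emptyset$, and (AS2) for $f$ supplies an irreducible component $C \subset f(x)$ with $(\theta^+ \cdot C) > 0$. Such a $C$ cannot be $f^n$-exceptional: since $C$ avoids $\Ind_{f^n}$, the projection formula gives $((f^n)^* \theta^+ \cdot C) = (\theta^+ \cdot (f^n)_* C)$, and if $C$ were contracted this would vanish, contradicting $\lambda^n(\theta^+ \cdot C) > 0$. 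Hence $\overline{f^n(C)}$ is an irreducible component of $f^{n+1}(x)$, and $(\theta^+ \cdot \overline{f^n(C)}) = \lambda^n(\theta^+ \cdot C) > 0$, finishing the case.

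The main obstacle I anticipate is justifying the divisorial identity above rigorously: one must trace the two-stage resolution $Y_{n+1} \to Y_1 \to X$ of $f^{n+1}$, verify that the exceptional curves introduced by blowing up the points of $f(x) \cap \Ind_{f^n}$ in $Y_1$ are sent by $g_{n+1}$ precisely to the curves $f^n(p)$, and confirm that any $f^n$-exceptional component of $f(x)$ drops out under the final pushforward rather than contributing a spurious piece to $f^{n+1}(x)$.
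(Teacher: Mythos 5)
Your argument is essentially the paper's: both induct on $n$ and split according to whether $f(x)$ meets $\Ind_{f^n}$. The paper skips your preliminary reduction to $x \in \Ind_f$; it simply observes that in the disjoint case the hypothesis $x \in \Ind_{f^{n+1}}$ already forces $x \in \Ind_f$, while the intersecting case covers $x \notin \Ind_f$ automatically (then $f(x)$ is a single point of $\Ind_{f^n}$). The full divisorial decomposition of $f^{n+1}(x)$ that you flag as the main obstacle is stronger than what is needed: the intersecting case uses only the one-sided inequality $f^{n+1}(x) \ge f^n(p)$ for a single $p \in f(x) \cap \Ind_{f^n}$, and the disjoint case uses $f^{n+1}(x) = f^n_* f(x)$ together with adjunction,
\[
(\theta^+ \cdot f^{n+1}(x)) = (\theta^+ \cdot f^n_* f(x)) = ((f^n)^* \theta^+ \cdot f(x)) = \lambda^n(\theta^+ \cdot f(x)) > 0,
\]
pushing forward the whole curve $f(x)$ rather than isolating a non-$f^n$-exceptional component as you do. So there is no genuine gap, only an overcomplication.
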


\begin{proof}
By symmetry it suffices to consider the case $n>0$, which we
treat by induction on $n$. 
Thus suppose $n>1$ and \(x \in \Ind_{f^n}\). 
If $f(x)\cap\Ind_{f^{n-1}}=\emptyset$, then $x\in\Ind_f$
and $f^n(x)=f^{n-1}_*f(x)$, so
\[
(\theta^+\cdot f^n(x))
=(\theta^+\cdot f^{n-1}_*f(x)) 
=(f^{(n-1)*}\theta^+\cdot f(x)) 
=\lambda^{n-1}(\theta^+\cdot f(x))>0.
\]
If instead $f(x)\cap\Ind_{f^{n-1}}\ne\emptyset$, 
then $f^n(x)\ge f^{n-1}(y)$ for any 
$y\in f(x)\cap\Ind_{f^{n-1}}$,
and hence
\[(
\theta^+\cdot f^n(x))
\ge(\theta^+\cdot f^{n-1}(y))>0.
\]
Thus $(\theta^+\cdot f^n(x))>0$ for $n\ge 1$ and $x\in\Ind_{f^n}$. 
A similar argument shows that $(\theta^-\cdot f^{-n}(x))>0$
for $n\ge 1$ and $x\in\Ind_{f^{-n}}$. This completes the proof.
\end{proof}

In the following, we may replace \(f\) by a forward iterate to simplify certain statements and arguments.

\begin{definition}
An irreducible curve \(C\) on \(X\) is a \textbf{base curve} for \(f\) (resp., \(f^{-1}\)) if \((C \cdot \theta^+)=0\) (resp., \((C \cdot \theta^-)=0\)). We let \(\cC^+\) (resp., \(\cC^-\)) denote the set of base curves for \(f\) (resp. \(f^{-1}\)).
\end{definition}

We assume henceforth that \(f\) is \textit{strictly birational}---i.e., not birationally conjugate to an automorphism. By Theorem 0.4 in~\cite{DF}, this assumption is equivalent to $\theta^+$ being big, that is, \((\theta^+ \cdot \theta^+) > 0\). It follows from the Hodge index theorem that each curve \(C \in \cC^+\) must have negative self-intersection; so \(C\) is the only effective divisor representing \([C] \in \Pic(X)\).  The following proposition only uses the fact that \(\theta^+\) is big and nef.
\begin{proposition}[\cite{Kaw08}, Proposition 1.3]\label{Kawdecomp}
  There are finitely many curves in \(\cC^+\). Further, 
  \begin{equation*}
    \theta^+=A+[D],
  \end{equation*}
  where $A\in\Pic(X)_{\R}$ is ample and $D$ is an effective $\R$-divisor on $X$ whose
  support is equal to the union of all the curves in $\cC^+$.
\end{proposition}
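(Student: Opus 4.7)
The plan is to establish finiteness of $\cC^+$ first, then construct the decomposition. For finiteness, every $C\in\cC^+$ satisfies $[C]\in(\theta^+)^\perp\subset\NS(X)_\R$, and since $(\theta^+)^2>0$, the Hodge index theorem forces the intersection form to be negative definite on $(\theta^+)^\perp$. Hence the Gram matrix of any finite collection of base curve classes is nonsingular, so these classes are linearly independent in the finite-dimensional space $\NS(X)_\R$; this bounds $|\cC^+|$ by $\rho(X)$.

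To construct $D$, enumerate $\cC^+=\{C_1,\dots,C_n\}$ and let $M:=((C_i\cdot C_j))$. Then $-M$ is a symmetric positive definite matrix with nonpositive off-diagonal entries (since distinct irreducible curves intersect nonnegatively), i.e., a symmetric $M$-matrix. A standard Neumann-series argument, applied within each connected component of the intersection graph, shows that $(-M)^{-1}$ has strictly positive entries. Fixing an ample class $H$ and solving $\sum_i a_i(C_i\cdot C_j)=-(H\cdot C_j)$ for every $j$ therefore yields strictly positive $a_i$; setting $D_0:=\sum_i a_i C_i$ then gives an effective $\R$-divisor with $\Supp(D_0)=\bigcup_i C_i$ and $(D_0\cdot C_j)=-(H\cdot C_j)<0$ for all $j$.

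I would then claim that $A:=\theta^+-\epsilon[D_0]$ is ample for sufficiently small $\epsilon>0$, in which case $\theta^+=A+\epsilon[D_0]$ is the desired decomposition. By the Campana--Peternell $\R$-version of the Nakai--Moishezon criterion, ampleness on a surface reduces to $(A^2)>0$ together with $(A\cdot C)>0$ for every irreducible $C$. Using $(\theta^+\cdot D_0)=0$ and $(D_0^2)<0$ yields $(A^2)=(\theta^+)^2+\epsilon^2(D_0^2)>0$ for small $\epsilon$, and $(A\cdot C_j)=\epsilon(H\cdot C_j)>0$ for base curves by construction.

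The hard part will be to secure $(A\cdot C)>0$ uniformly over non-base irreducible $C$, i.e., to arrange $\epsilon<(\theta^+\cdot C)/(D_0\cdot C)$ whenever the denominator is positive, with a single $\epsilon$ that works for all such $C$. I would obtain the needed uniform bound by applying Kodaira's lemma to write $\theta^+=A'+[E']$ with $A'$ ample and $E'$ effective, and then picking $\eta>0$ with $A'-\eta H$ nef (by openness of the ample cone) and $c>0$ with $cH-[D_0]$ nef (for $c$ large). For $C\not\subset\Supp(E')$ these estimates give $(\theta^+\cdot C)\ge\eta(H\cdot C)$ and $(D_0\cdot C)\le c(H\cdot C)$, hence the uniform ratio lower bound $\eta/c$. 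The finitely many non-base $C$ inside $\Supp(E')$ contribute only finitely many further constraints on $\epsilon$, which are handled by shrinking it. Any $\epsilon>0$ satisfying all constraints (including $(A^2)>0$) then yields the decomposition $\theta^+=A+\epsilon[D_0]$ with $\Supp(\epsilon D_0)$ equal to the union of the curves in $\cC^+$.
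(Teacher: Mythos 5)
The paper does not prove this proposition; it cites it from Kawaguchi (Proposition~1.3 in~\cite{Kaw08}) and merely notes that the proof uses only that $\theta^+$ is big and nef, so there is no in-paper argument to compare against. Assessed on its own, your construction is correct once it is known that $\cC^+$ is finite and the classes $[C]$, $C\in\cC^+$, are linearly independent in $\NS(X)_\R$: inverting the symmetric M-matrix $-M$ to produce an effective $D_0$ with $\Supp(D_0)=\bigcup C_i$ and $(D_0\cdot C_j)<0$, checking $(A^2)>0$ using $(\theta^+\cdot D_0)=0$, applying the $\R$-Nakai--Moishezon criterion, and deriving a uniform lower bound on $(\theta^+\cdot C)/(D_0\cdot C)$ from Kodaira's lemma are all sound.

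The one genuine gap is in your finiteness step. The inference ``the intersection form is negative definite on $(\theta^+)^\perp$, hence the Gram matrix of the base curve classes is nonsingular'' is circular: for any symmetric bilinear form, the Gram matrix of a finite family of vectors is nonsingular precisely when the vectors are linearly independent, so you are assuming exactly what you want to prove. Negative definiteness of the ambient form tells you nothing about nonsingularity of the Gram matrix of an arbitrary finite collection. The correct argument uses that the $C_i$ are distinct irreducible effective curves. If $\sum a_i[C_i]=0$, set $P:=\sum_{a_i>0}a_iC_i$ and $N:=\sum_{a_i<0}(-a_i)C_i$; then $P,N$ are effective with disjoint supports and $[P]=[N]$, so $(P^2)=(P\cdot N)\ge 0$, while $(P\cdot\theta^+)=0$ and the Hodge index theorem give $(P^2)<0$ unless $[P]=0$; effectivity (pairing with an ample class) then forces $P=N=0$, i.e.\ all $a_i=0$. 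Even more directly, since you already invoke Kodaira's lemma: write $\theta^+=A'+[E']$ with $A'$ ample and $E'$ effective; for $C\in\cC^+$ one has $(A'\cdot C)>0$ and $(\theta^+\cdot C)=0$, hence $(E'\cdot C)<0$, hence $C\subset\Supp(E')$, so $\cC^+$ lies among the finitely many irreducible components of $E'$. With linear independence established this way, the negative definiteness of $M=((C_i\cdot C_j))$ that your Neumann-series inversion requires follows from the Hodge index theorem, and the remainder of your argument goes through as written.
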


The decomposition in Proposition~\ref{Kawdecomp} is neither unique nor canonical. However, every decomposition \(\theta^+ = A + [D]\) with \(A \in \Pic(X)_\R\) ample and $D$ an effective \(\R\)-divisor must have the property that the support of \(D\) contains every \(C \in \cC^+\). On the other hand, \(\theta^+\) may be representable by an effective \(\R\)-divisor whose support does not contain every curve in \(\cC^+\).

\subsection{Complex dynamics and finite energy}\label{BGenergy}
We now take \(X\) and \(f\) to be defined over \(\C\). 
Assume $f$ satisfies~(AS1) but not necessarily~(AS2).
By Theorem 0.5 in~\cite{DF} and Theorem 2.6 in~\cite{BD-A}, the cohomology classes \(\theta^+\) and \(\theta^-\) contain unique positive currents \(T^+\) and \(T^-\) satisfying \(f^*T^+=\lambda T^+\) and \(f_*T^-=\lambda T^-\). 
These currents have zero Lelong number outside
$\bigcup_{n\ge1}\Ind_{f^{\pm n}}$.
If \(\omega^{\pm}\) is a smooth form in the cohomology class \(\theta^{\pm}\),
then \(T^{\pm} = \omega^{\pm} + dd^cG^{\pm}\) with \(G^{\pm}\) quasi-psh.

In this setting, Bedford and Diller~\cite{BD-A} (see also~\cite{DillerP2}) introduced the condition
\begin{equation*}
  \sum_{k \geq 0} \lambda^{-k} \log \Dist \big(f^k(\Ind_{f^{-1}}),\Ind_f \big) > -\infty,\tag{BD}
\end{equation*}
which is stronger than (AS1). 
(Here \(\Dist\) is any distance function on \(X\) induced by a Hermitian metric.)  
Assuming that~(AS2) holds, Theorem 4.3 in~\cite{BD-A} shows that (BD) is equivalent to the condition \(G^+(x) > -\infty\) for every \(x \in \Ind_{f^{-1}}\).

When \(f\) satisfies~(AS2) and~(BD), the complex dynamics of $f$ is well-behaved. 
Indeed, the main result in~\cite{BD-A} states that \(\mu = T^+ \wedge T^-\) is a well-defined probability measure that charges no algebraic set and is invariant, mixing, and hyperbolic for \(f\). 
Dujardin~\cite{Duj04,Duj06} improved upon this result, by showing that $T^\pm$ are laminar
currents; 
that the intersection $\mu=T^+\wedge T^-$ is geometric and that $\mu$ has a local product structure
in the sense of Pesin.
Further, $\mu$ has maximal entropy $\log\lambda$, describes the distribution of saddle
periodic points, and has Lyapunov exponents satisfying $|\chi|\ge\frac12\log\lambda$.
Finally, Diller, Dujardin and Guedj, in their work~\cite{DDG2} 
on rational maps of small topological degree, proved that $\mu$ does not charge pluripolar sets.
They also extended this result to allow for slightly weaker conditions than~(BD). However, examples by Favre~\cite{Fav-A} and Bedford~\cite{Bed03} (ex. 5) show that (BD) (and even the weaker conditions in \cite{DDG2}) can fail to hold, and examples by Buff \cite{Buff} show that \(T^+\) and \(T^-\) can charge pluripolar sets.

In~\S\ref{PfMainThm}, we will prove Theorem~\ref{MainThm} by showing that \(f\) satisfies (BD) when \(X\) and \(f\) are defined over a number field (with a given embedding into \(\C\)) and $f$ satisfies~(AS2).

The currents $T^{\pm}$ are essentially birationally invariant. 
More precisely, suppose we have a birational morphism $\rho\colon\tX\to X$
and that the conjugate birational maps $f:X\dashrightarrow X$ and 
$\tf:\tX\dashrightarrow\tX$ both satisfy~(AS1). Let $T^\pm$ and
$\tT^\pm$ be the invariant currents. 
Then Proposition~2.8 in~\cite{BD-A} shows that, up to scaling, we have
$\rho_*\tT^\pm=T^\pm$. Since $\tT^\pm$ has zero Lelong number outside
the countable set $\bigcup_{n\ge1}\Ind_{f^n}$, it follows that
$\tT^\pm$ is the strict transform of $T^\pm$.

\subsection{Height functions}\label{BGheights}
In the next two sections, \(X\) is any irreducible projective variety over a number field \(K\). (In~\S\ref{birheightfacts}, we present results specific to the case where \(X\) is a rational surface equipped with a birational self-map.)  The following definitions and facts are presented in detail in~\cite{BG}. 

Let \(M_K\) be the set of normalized, nontrivial absolute values on \(K\), as defined in~\cite[\S1.4]{BG}. 
For $v\in M_K$, let $K_v$ be the completion of $K$ with respect to the norm $|\cdot|_v$.
For every \(a \in K \smallsetminus \{0\}\), the \emph{product formula} (see~\cite[Proposition 1.4.4]{BG}) states \[\sum_{v \in M_K} \log |a|_v =0,\]
where, furthermore, only finitely many terms are nonzero.
 
Given a base point free line bundle $L$ on $X$, a finite set $\Sigma$ of global sections spanning \(H^0(X,L)\), a nonzero global section \(\sigma \in H^0(X,L)\), and $v\in M_K$, 
we define the \textit{local height function}
\begin{equation*}
  h_{\sigma,\Sigma,v}(x) 
  := \log \max_{\tau\in\Sigma}\frac{|\tau(x)|_v}{|\sigma(x)|_v}
\end{equation*}
for $x \in X(K_v)$ with $\sigma(x)\ne0$. 
Note that $h_{\sigma,\Sigma,v}(x)\to+\infty$ as $x$ approaches $\Div(\sigma)$ in the $v$-adic
topology on $X(K_v)$.

We also define a \textit{global height function} by 
\[h_{\sigma,\Sigma}(x) := \sum_{v \in M_K} h_{\sigma,\Sigma,v}(x)\]
for $x\in X(K)$ with $\sigma(x)\ne0$, the sum again being finite.
The product formula implies that $h_{\sigma,\Sigma}$ does not depend on $\sigma$ in the sense 
that, if $\sigma,\sigma' \in H^0(X,L)$, then 
$h_{\sigma',\Sigma}(x)=h_{\sigma,\Sigma}(x)$ for $x\in X(K)$ with $\sigma(x),\sigma'(x)\ne0$.
Since $L$ is base point free, we can then define a function 
$h_{L,\Sigma}$ on $X(K)$ by setting 
\begin{equation*}
  h_{L,\Sigma}(x) := h_{\sigma,\Sigma}(x) 
\end{equation*}
where $\sigma\in H^0(X,L)$ is any section with $\sigma(x)\ne0$.

Using the same formulas above, $h_{L,\Sigma}$ extends to a function on
$X(\bK)$. This function does depend on the choice of $\Sigma$, 
but if $\Sigma,\Sigma'$ are finite spanning subsets of $H^0(X,L)$, then 
$h_{L,\Sigma}-h_{L,\Sigma'}$ is a bounded function on $X(\bK)$.
Hence we have a well-defined element $h_L$ in the vector space $W_{X,\bK}$
of functions on $X(\bK)$ modulo bounded functions. 
The assignment $L\to h_L$ extends uniquely to an $\R$- linear map from
$\Pic(X)_{\R}$ to $W_{X,\bK}$.

The element $h_L$ associated to $L\in\Pic(X)_{\R}$ is called the \textit{Weil height} of $L$. 
Slightly abusively, we think of it as a function on $X(\bK)$.
The assignment $L\to h_L$ is functorial in the sense that if 
$\phi\colon Y\to X$ is a morphism of irreducible varieties over \(K\), 
then $h_{\phi^*L}=h_L\circ\phi+O(1)$ on $Y(\bK)$.
We shall also use
\begin{proposition}[\cite{BG}, Proposition 2.3.9]\label{heightbounds}
  If \(D \in \DivGP(X)_\R\) is effective, then $h_{[D]}$ is bounded from below on 
  $(X\smallsetminus\Supp(D))(\bK)$.
\end{proposition}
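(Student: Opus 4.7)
The plan is to reduce the statement to an effective integral divisor and then exploit the canonical section of $\cO(D)$ by comparing $h_{[D]}$ with the height of a suitable base-point-free auxiliary line bundle.

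First I would write $D=\sum_i a_iD_i$ with $D_i$ distinct prime divisors and $a_i\in\R_{\ge0}$. The $\R$-linearity of $L\mapsto h_L$ gives $h_{[D]}=\sum_ia_ih_{[D_i]}$ in $W_{X,\bK}$, and a point $x\in X(\bK)$ lies outside $\Supp(D)=\bigcup_{a_i>0}D_i$ precisely when $x$ avoids each $D_i$ with $a_i>0$. Thus a lower bound for each $h_{[D_i]}$ outside $D_i$ implies the claim, so I may assume $D$ is an effective integral divisor.

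Let $\sigma_D\in H^0(X,\cO(D))$ be the canonical section with $\Div(\sigma_D)=D$. Pick a very ample line bundle $H$ and an integer $n$ large enough that both $M:=nH$ and $L:=\cO(D)+M$ are base point free (in fact very ample, by Serre's theorem). Fix a finite spanning set $\Sigma_M\subset H^0(X,M)$, and enlarge the finite set $\{\sigma_D\cdot\tau:\tau\in\Sigma_M\}\subset H^0(X,L)$ to a finite spanning set $\Sigma$ of $H^0(X,L)$. For $x\in X(K)\setminus\Supp(D)$, base point freeness of $M$ provides some $\tau_0\in\Sigma_M$ with $\tau_0(x)\neq 0$; then $s_0:=\sigma_D\cdot\tau_0\in\Sigma$ satisfies $s_0(x)\neq 0$ since $\sigma_D(x)\neq 0$. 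Using $s_0$ and $\tau_0$ as reference sections for $L$ and $M$ at every place $v\in M_K$, the inclusion $\sigma_D\cdot\Sigma_M\subset\Sigma$ yields
\[
h_{s_0,\Sigma,v}(x)\;\ge\;\log\max_{\tau\in\Sigma_M}\frac{|\sigma_D(x)\,\tau(x)|_v}{|\sigma_D(x)\,\tau_0(x)|_v}\;=\;\log\max_{\tau\in\Sigma_M}\frac{|\tau(x)|_v}{|\tau_0(x)|_v}\;=\;h_{\tau_0,\Sigma_M,v}(x).
\]
Summing over all $v\in M_K$ gives $h_{L,\Sigma}(x)\ge h_{M,\Sigma_M}(x)$, so the representative $h_{L,\Sigma}-h_{M,\Sigma_M}$ of $h_{[D]}=h_L-h_M$ is non-negative on $(X\setminus\Supp(D))(K)$; any other representative of $h_{[D]}$ differs from this one by a bounded function, hence is bounded from below.

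To pass from $X(K)$ to $X(\bK)$, given $x\in X(\bK)$ I would take a finite extension $K'/K$ over which $x$ is defined and rerun the whole computation with $M_{K'}$ replacing $M_K$; since the data $\sigma_D,H,n,\Sigma,\Sigma_M$ were all fixed over $K$, the resulting lower bound is uniform in $K'$, hence yields a uniform lower bound on $(X\smallsetminus\Supp(D))(\bK)$. The one genuinely non-formal step is the insertion of $\sigma_D$ into the spanning set of the positive twist $L$ so that the factor $|\sigma_D(x)|_v$ cancels from numerator and denominator at each place; this is where the effectivity of $D$ is crucially used. Everything else is $\R$-linearity of the Weil height and the product formula, which justifies the freedom in choosing the reference section in the global height.
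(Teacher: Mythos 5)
Your argument is correct, and it is essentially the standard proof of this fact. The paper does not supply its own proof but simply cites Bombieri--Gubler, Proposition~2.3.9, so there is nothing in the paper to compare against beyond the citation; your write-up is a faithful reconstruction of the textbook argument. The reduction to an effective integral divisor via $\R$-linearity of $L\mapsto h_L$ is valid (with the understanding that one sums specific representatives $h_{[D_i]}$, each bounded below off $D_i$, to get a representative of $h_{[D]}$ bounded below off $\Supp(D)$), the cancellation of $|\sigma_D(x)|_v$ in the quotient $|\sigma_D(x)\tau(x)|_v/|\sigma_D(x)\tau_0(x)|_v$ is exactly where effectivity enters, and the extension to $X(\bK)$ by running the same local computation over a finite extension $K'/K$ is standard since the chosen spanning sets are defined over $K$ and the place-by-place inequality $h_{s_0,\Sigma,w}(x)\ge h_{\tau_0,\Sigma_M,w}(x)$ holds for every $w\in M_{K'}$. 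No gaps.
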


\subsection{Distance functions}\label{S102}
Now suppose $X$ is smooth. 
For any $v\in M_K$, we equip $X(K_v)$ with a metric $\Dist_v$ given by the
pullback of the Fubini-Study metric on $\mathbb{P}^N(K_v)$, for
some fixed embedding $X\hookrightarrow\mathbb{P}^N_K$.
This metric is not canonical, but any two embeddings give rise to 
metrics on $X(K_v)$ that are Lipschitz equivalent. More generally,
if $x\in X(K_v)$ and $(z_1,\dots,z_n)$ are local $v$-adic analytic 
coordinates at $x$, then $(z_1,\dots,z_n)$ define a bi-Lipschitz
homeomorphism of a neighborhood of $x$ in $X(K_v)$ onto a bidisc 
in $K_v^n$.

\begin{lemma}\label{L102}
  Let $L$ be a base point free line bundle on $X$,  
  $\Sigma\subset H^0(X,L)$ a spanning set, $\sigma\in H^0(X,L)$ 
  a nonzero section and $y\in X(K)$ a point at which $\sigma$ vanishes.
  Then, for every $v\in M_K$ there exists $D_v\in\R$
  such that 
  \begin{equation}\label{e104}
    \log\frac{|\sigma(x)|_v}{\max_{\tau\in\Sigma}|\tau(x)|_v}
    \le\log\Dist_v(x,y)+D_v
  \end{equation}
  for all $x\in X(K_v)$, where 
  $D_v$ depends on $\sigma$, $\Sigma$, $v$ and the distance function 
  on $X(K_v)$, but not on $x$ or $y$.
\end{lemma}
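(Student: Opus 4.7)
The plan is to split the analysis into a \emph{local} part, valid on a small $v$-adic neighborhood of $y$ where the left-hand side of~\eqref{e104} tends to $-\infty$ linearly in $\log\Dist_v(x,y)$, and a \emph{global} part on the (compact) complement, handled by continuity and compactness of $X(K_v)$.

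First I would use that $L$ is base point free and $\Sigma$ spans $H^0(X,L)$ to choose $\tau_0\in\Sigma$ with $\tau_0(y)\ne 0$. Then $\sigma/\tau_0$ is a rational function on $X$, regular at $y$ and vanishing there. Next I pick $v$-adic analytic local coordinates $(z_1,\dots,z_n)$ centered at $y$ as in the paragraph preceding the lemma; these define a bi-Lipschitz homeomorphism of a neighborhood $V$ of $y$ in $X(K_v)$ onto a polydisc in $K_v^n$. In these coordinates $\sigma/\tau_0$ becomes a convergent power series $F(z)=\sum_{|\alpha|\ge 1} a_\alpha z^\alpha$ with no constant term. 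A standard estimate (Taylor remainder for Archimedean $v$, a termwise ultrametric bound for non-Archimedean $v$) produces a constant $C_v$ with $|F(z)|_v\le C_v\max_i|z_i|_v$ on a slightly smaller polydisc. Transporting this via the bi-Lipschitz identification yields a neighborhood $U_v$ of $y$ in $X(K_v)$ and a constant $C'_v$ with
\[
|\sigma(x)|_v/|\tau_0(x)|_v\;\le\; C'_v\,\Dist_v(x,y)
\quad\text{for all }x\in U_v.
\]
Since $|\tau_0(x)|_v\le\max_{\tau\in\Sigma}|\tau(x)|_v$, this is exactly~\eqref{e104} on $U_v$ with $D_v=\log C'_v$, and the inequality is trivially satisfied at $x=y$ (where both sides are $-\infty$).

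Second, the complement $X(K_v)\smallsetminus U_v$ is compact, using that $X$ is projective and $K_v$ is locally compact (this handles both Archimedean and non-Archimedean $v$ uniformly). The continuous function $x\mapsto\max_{\tau\in\Sigma}|\tau(x)|_v$ is strictly positive on all of $X(K_v)$, because $\Sigma$ spans $H^0(X,L)$ and $L$ is base point free, so some $\tau\in\Sigma$ is nonzero at every point. Hence $|\sigma(x)|_v/\max_\tau|\tau(x)|_v$ is continuous and attains a maximum $M_v<\infty$ on the complement, while $\Dist_v(x,y)\ge\epsilon_v>0$ there. Thus~\eqref{e104} holds off $U_v$ for any $D_v\ge\log M_v-\log\epsilon_v$, and taking $D_v$ to be the maximum of the two constants completes the argument.

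The only genuine work lies in the first step. For Archimedean $v$ this is a one-line application of Taylor's theorem to the analytic function $F$ at the origin. For non-Archimedean $v$ one dominates $F$ termwise: on a polydisc of sufficiently small radius $r$ one has $|F(z)|_v\le\bigl(\max_{|\alpha|\ge 1}|a_\alpha|_v r^{|\alpha|-1}\bigr)\max_i|z_i|_v$, and the supremum on the right is finite by convergence of the series. No input beyond standard $v$-adic analysis is required, and the overall structure mirrors familiar local-global comparisons used in height theory.
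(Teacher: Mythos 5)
Your proof is correct and follows essentially the same route as the paper's: pick $\tau_0\in\Sigma$ nonvanishing at $y$, expand the regular function $\sigma/\tau_0$ in local $v$-adic analytic coordinates at $y$, bound it by $\Dist_v(\cdot,y)$ via its Taylor/power series, and handle the complement of a small neighborhood of $y$ by compactness of $X(K_v)$. One point worth noting, shared with the paper's own terse proof, is that the constant you produce a priori depends on $y$ (through the choice of $\tau_0$, the local coordinates, and the neighborhood $U_v$), whereas the lemma asserts $D_v$ is independent of $y$; that extra uniformity would require a compactness argument over $\Supp(\Div(\sigma))(K_v)$ (or, equivalently, a global Lipschitz bound for $x\mapsto|\sigma(x)|_v/\max_\tau|\tau(x)|_v$), though it is immaterial in the only application, Proposition~\ref{lochtbd}, where $y$ ranges over the finite set $\Ind_f$.
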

\begin{proof}
  The statement is local on $X(K_v)$. Pick any $\tau\in\Sigma$ such that 
  $\tau(y)\ne0$. Then $f:=\sigma/\tau$ is a rational function on $X$
  that is regular and vanishes at $y$ and the left-hand side of~\eqref{e104} is
  bounded above by $\log|f(x)|_v$ near $y$.
  Pick local $v$-analytic coordinates $(z_1,\dots,z_n)$ at $y$. 
  By considering the Taylor expansion of $f$ we see that 
  $\log|f(x)|_v\le\log\max_i|z_i(x)|+O(1)=\log\Dist_v(x,y)$,
  which completes the proof.
\end{proof}

\section{Algebraically stable maps on $\mathbb{P}^2$}\label{S301}
For the convenience of the reader, we here present the proof of Theorem~A 
in the case of an algebraically stable birational map 
$f\colon\mathbb{P}_\mathbb{C}^2\dashrightarrow\mathbb{P}_\mathbb{C}^2$
of degree $\lambda\ge2$.
This case avoids many of the technical difficulties encountered in the general case.

By assumption,~(AS1) is satisfied, the dynamical degree of $f$ is
$\lambda$, and we have $\theta^+=\theta^-=\cO_{\mathbb{P}^2}(1)$, 
up to scaling. In particular, $\theta^+$ and $\theta^-$ are ample, so~(AS2) is automatically satisfied.

Assume that $f$ is defined over a number field $K$, with a fixed embedding 
$K\hookrightarrow\mathbb{C}$. 
We may assume that $\Ind_f$ and $\Ind_{f^{-1}}$ are defined over $K$.
Let $M_K$ be the set of normalized nontrivial absolute values of $K$. For each $v\in M_K$,
let $K_v$ denote the completion of $K$ with respect to $v$ and let 
$\Dist_v$ denote the Fubini-Study distance on $X(K_v)$.
We shall prove that,  for every $v\in M_K$ and every point $y\in X(K)$ whose 
forward orbit is disjoint from $\Ind_f$, we have 
  \begin{equation}\label{e302}
    \sum_{k \geq 0} \lambda^{-k} \log \Dist_v\big( f^k(y),\Ind_f \big) > -\infty.\tag{$\mathrm{BD}_\nu$}
  \end{equation}
Letting $v$ be the absolute value on $K$ induced by the embedding $K\hookrightarrow\mathbb{C}$,
and taking $y\in\Ind_{f^{-1}}$, this implies that the Bedford-Diller energy condition~(BD) holds. 

Let $\pi:K^3\to\mathbb{P}_K^2$ be the projection, and pick a homogeneous polynomial mapping
$F:K^3\to K^3$ such that $\pi\circ F=f\circ\pi$. Note that $F$ is unique up to scaling
and that $F(a)=0$ iff $\pi(a)\in\Ind_f$.

For any $v\in M_K$, let $\|\cdot\|_v$ be the norm on $K_v^3$ defined by 
$\|(a_0,a_1,a_2)\|_v:=\max_i|a_i|_v$.
The height on $\mathbb{P}^2(K)$ is now defined by 
\begin{equation*}
  h(\pi(a)):=\sum_{v\in M_K}\log\|(a_0,a_1,a_2)\|_v.
\end{equation*}
By the product formula, the right hand side does not change if we multiply the $a_j$ 
by a common nonzero number from $K$. Thus the height $h$ is well defined.
It is equal to the global height $h_{\sigma,\Sigma}$ in~\S\ref{BGheights} with $L=\cO(1)$,
$\Sigma=\{z_0,z_1,z_2\}\subset H^0(\mathbb{P}^2,\cO(1))$ and $\sigma=z_i$ for some $i$.
Note that $h\ge0$ since we can assume that $a_i=1$ for some $i$.

For $y\in(\mathbb{P}^2\setminus I_f)(K)$ and $v\in M_K$, define
\begin{equation*}
  \varphi_v(y):=\frac1{\lambda}\log\|F(a)\|_v-\log\|a\|_v,
\end{equation*}
for any $a\in K^3$ with $\pi(a)=y$.
There exists a constant $C_v\ge0$ only depending on $F$ such that 
\begin{equation*}
  \varphi_v(y)\le C_v
\end{equation*}
for all $y\in(\mathbb{P}^2\setminus I_f)(K)$. Further, there exists a 
finite set $S\subset M_K$ such that $C_v=0$ for $v\in M_K\setminus S$.

Note that there exists a constant $D>0$ such that 
\begin{equation}\label{e301}
  \varphi_v\le\log\Dist_v(\cdot,I_f)+D
\end{equation}
on $(\mathbb{P}^2\setminus I_f)(K)$, for all $v\in M_K$.

Now consider any point $y\in\mathbb{P}^2(K)$ such that $f^n(y)\not\in I_f$
for $n\ge1$. Write $y=\pi(a)$ with $a\in K^3$.
For every $k\ge0$ we have
\begin{equation*}
  \sum_{v\in M_K}\varphi_v(y_k)=\frac1{\lambda}h(y_{k+1})-h(y_k),
\end{equation*}
where $y_k:=f^k(y)$.
Hence 
\begin{equation*}
  \sum_{k=0}^{n-1}\sum_{v\in M_K}\lambda^{-k}\varphi_v(y_k)
  =\frac1{\lambda^n}h(y_n)-h(y)\ge-h(y),
\end{equation*}
for every $n\ge 0$. 
Now, for every $v\in M_K$ we have 
\begin{equation*}
  \sum_{k=0}^{n-1}\lambda^{-k}\varphi_v(y_k)
  \le\sum_{k=0}^{n-1}\lambda^{-k}C_v
  \le 2C_v.
\end{equation*}
This implies that, for every $v\in M_K$ and every $n\ge 0$:
\begin{equation*}
  \sum_{k=0}^{n-1}\lambda^{-k}\varphi_v(y_k)
  \ge-h(y)-2\sum_{w\ne v}C_w
  \ge-h(y)-2\sum_{w\in M_K}C_w,
\end{equation*}
where the last sum is finite.
It follows that $\sum_{k=0}^\infty\lambda^{-k}\varphi_v(y_k)>-\infty$.
This implies that condition~\eqref{e302} holds,
in view of~\eqref{e301}. 
\begin{remark}
  Vigny~\cite{Vig15} proved that if a birational selfmap of $\mathbb{P}^2$ satisfies~(BD), then it 
  has exponential decay of correlations.
\end{remark}
\begin{remark}
  It is known~\cite[Proposition~4.5]{BD-A} that if $f$ is any birational selfmap of 
  $\mathbb{P}^2$ of degree $>1$,
  then $f\circ A$ satisfies the Bedford-Diller condition for all 
  $A\in\mathrm{Aut}(\mathbb{P}^2)$  outside a pluripolar set.
  However, such a result is not useful for our study since the pluripolar set in question could 
  a priori contain all automorphisms of $\mathbb{P}^2$ defined over $\bar{\mathbb{Q}}$.
\end{remark}

\section{Dynamics of the base curves}\label{S202}
We take \(X\) and \(f\) to be defined over an algebraically closed field of arbitrary characteristic. Additionally, we take \(f\) to be strictly birational with \(\lambda > 1\) and to satisfy (AS1) and (AS2). The main result here is a quite precise description of the dynamics on the base curves for \(f\) and \(f^{-1}\).

\begin{theorem}\label{basecurvesmain}
  We can write \(\cC^+ = \cCper \sqcup \cCexc^+\) and \(\cC^- = \cCper \sqcup \cCexc^-\)
  and there exists $N\ge 1$ such that the behavior of every \(C \in \cC^+ \cup \cC^-\) 
  under \(f\) is described as follows.
  \begin{itemize}
  \item[(1)] 
    If \(C \in \cCper\), then, for every $n\in\Z$,
    \(C \cap \Ind_{f^n}=C\cap R_{f^n} = \emptyset\) 
    and \(f^{n*}C=C_n\) for some $C_n\in\cCper$.
  \item[(2)] 
    If $C\in \cCper$ and $D\in\cCexc^+\cup\cCexc^-$, then $C\cap D= \emptyset$.
  \item[(3)] 
    If \(C \in \cCexc^+ \cap \cCexc^-\), then \(C \cap \Ind_{f^n} = \emptyset\) 
    for every \(n \in \mathbb{Z}\) and \(f^{n*}C=0\) for $|n|\ge N$.
  \item[(4)] 
    If \(C \in \cCexc^+ \smallsetminus \cCexc^-\), then
    \begin{itemize}
    \item[(a)] \(C \cap \Ind_{f^n} = \emptyset\) for every \(n \in \N\);
    \item[(b)] \(f^n_*C=0\) for $n\ge N$;
    \item[(c)] \((C\cdot\theta^-) > 0\); and
    \item[(d)] \((f^n)^*[C]\) is big and nef for $n\ge N$;
    \end{itemize}
  \item[(5)] 
    If \(C \in \cCexc^- \smallsetminus \cCexc^+\), then (4) holds with 
    \(n\) replaced by \(-n\) 
    and \(\theta^-\) replaced by \(\theta^+\).
  \end{itemize}
\end{theorem}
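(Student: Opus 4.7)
The plan is to classify irreducible base curves by the long-term behavior of their strict transforms under $f^\#$, exploiting three ingredients: the finiteness of $\cC^\pm$ (Proposition~\ref{Kawdecomp}), the negative-definiteness of the intersection form on the subspace $V^\pm:=\mathrm{span}(\cC^\pm)\subseteq\Pic(X)_{\R}$, and the Diller-Favre limit~\eqref{e101}. The first observation is that $f^\#$ is a well-defined partial self-map of the finite set $\cC^+$: if $C\in\cC^+$ is not $f$-exceptional then $f_*[C]=m[f^\#C]$ and $(f_*[C]\cdot\theta^+)=\lambda([C]\cdot\theta^+)=0$ forces $f^\#C\in\cC^+$. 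By pigeonhole, every $f^\#$-orbit in $\cC^+$ either lands in a finite cycle or is eventually $f$-exceptional. Let $\cC^+_{\mathrm{cyc}}\subseteq\cC^+$ denote the union of the $f^\#$-cycles, and define $\cC^-_{\mathrm{cyc}}$ analogously using $(f^{-1})^\#$.

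The crux is the inclusion $\cC^+_{\mathrm{cyc}}\subseteq\cC^-$. Replacing $f$ by an iterate $f^k$ (which does not affect $\cC^\pm$, $V^\pm$, or $\theta^\pm$ up to scalar), we may assume $f^\#C=C$ for the given $C\in\cC^+_{\mathrm{cyc}}$. Writing $f=g\circ\pi^{-1}$, the pullback reads $f^*C=C+F$, where $F$ is effective and supported on $f$-exceptional curves (arising from the $g$-exceptional part of $g^*C$). The eigenrelation $f^*\theta^+=\lambda\theta^+$ together with $([C]\cdot\theta^+)=0$ gives $([F]\cdot\theta^+)=0$, and effectivity plus nefness of $\theta^+$ force every irreducible component of $F$ into $\cC^+$, so $[F]\in V^+$. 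Iterating this yields $(f^n)^*[C]=[C]+[F_n]$ with $[F_n]\in V^+$ for every $n\geq 0$. Applying~\eqref{e101} gives
\begin{equation*}
\lambda^{-n}(f^n)^*[C]\longrightarrow\frac{([C]\cdot\theta^-)}{(\theta^+\cdot\theta^-)}\,\theta^+.
\end{equation*}
The left-hand side lies in the closed subspace $V^+$, whereas $\theta^+\notin V^+$ (since $(\theta^+)^2>0$ but $V^+\subseteq(\theta^+)^\perp$ is negative definite). Hence the limiting coefficient vanishes, forcing $([C]\cdot\theta^-)=0$, i.e., $C\in\cC^-$.

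The symmetric argument gives $\cC^-_{\mathrm{cyc}}\subseteq\cC^+$, and traversing a cycle backwards identifies $f^\#$-cycles in $\cC^+$ with $(f^{-1})^\#$-cycles in $\cC^-$, so $\cCper:=\cC^+_{\mathrm{cyc}}=\cC^-_{\mathrm{cyc}}\subseteq\cC^+\cap\cC^-$; setting $\cCexc^\pm:=\cC^\pm\setminus\cCper$ yields the claimed partition. To verify (1), I would reduce to $f^\#C=C$ and observe that $f_*C=C$ (as $f$ restricts to a birational self-map of $C$), while $f_*C=g_*\pi^*C=C+g_*(E)$ with $E$ effective $\pi$-exceptional supported over $C\cap\Ind_f$; the contribution $g_*(E)$ is nonzero whenever $C\cap\Ind_f\neq\emptyset$, because by (AS2) each $f(y)=g(\pi^{-1}(y))$ is a genuine curve. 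Hence $C\cap\Ind_f=\emptyset$, and by symmetry $C\cap\Ind_{f^{-1}}=\emptyset$, so $f^{n*}C$ has no exceptional contribution and equals a single curve $C_n\in\cCper$; the non-intersection with $R_{f^n}$ follows from $0=((f^n)_*D\cdot C_n)=(D\cdot(f^n)^*C_n)=(D\cdot C)$ for every $f^n$-exceptional $D$ via adjointness. Property (2) is immediate from (1). For (3)--(5), the contraction claims (4b), (5b), and (3) follow from the pigeonhole of the first step; (4c) is the definition of $\cCexc^+\smallsetminus\cCexc^-$; and (4d) follows from the Diller-Favre limit $\lambda^{-n}(f^n)^*[C]\to c\theta^+$ with $c>0$, combined with a Kawaguchi decomposition $\theta^+=A+[D']$ (with $A$ a rational ample class) to produce a uniform lower bound $(\theta^+\cdot D)\geq\delta>0$ on integral curves $D\notin\cC^+$, and checking separately the finitely many intersections against the components of $(f^n)^*[C]$ lying in $\cC^+$.

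The main obstacle is the key inclusion $\cC^+_{\mathrm{cyc}}\subseteq\cC^-$: it depends on the slightly surprising observation that the exceptional correction $[F_n]$ remains trapped inside the very subspace $V^+$ that the Diller-Favre limit cannot belong to, and this rigidity relies on the nontrivial interplay between effectivity, intersection theory, and the $\theta^+$-eigenstructure of $f^*$.
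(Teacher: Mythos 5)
Your overall strategy — classify base curves by the $f^\#$-dynamics, show the cycle curves form a common set $\cCper$ inside $\cC^+\cap\cC^-$, and treat the eventually-exceptional curves via the Diller--Favre limit — is a reasonable plan and is structurally close to the paper's. However, the argument you give for the key inclusion $\cC^+_{\mathrm{cyc}}\subseteq\cC^-$ has a genuine gap, and it occurs precisely at the step the paper has to work hardest on.

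You write $f^*C=C+F$ (for $f^\#C=C$) and assert that $([F]\cdot\theta^+)=0$ follows from ``the eigenrelation $f^*\theta^+=\lambda\theta^+$ together with $([C]\cdot\theta^+)=0$.'' That deduction is not valid when $f$ is strictly birational, because $f^*$ is then not an isometry of $\Pic(X)_\R$. Concretely, by adjointness $([F]\cdot\theta^+)=(f^*[C]\cdot\theta^+)=([C]\cdot f_*\theta^+)$, and $\lambda f_*\theta^+=f_*f^*\theta^+=\theta^++g_*E^+$, where $E^+:=\pi^*f^*\theta^+-g^*\theta^+$ is effective and $\pi$-exceptional by~\eqref{e102}; thus $g_*E^+$ is an effective divisor supported on $f^{-1}$-exceptional curves and $([F]\cdot\theta^+)=\lambda^{-1}([C]\cdot g_*E^+)\ge0$, with no formal reason for it to vanish. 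Whether it vanishes is exactly the question of whether the cycle curve $C$ avoids the relevant part of $R_{f^{-1}}$ — which is the substance of what you are trying to prove, not a consequence of the eigenrelation. The paper breaks this circle with a separate, non-formal argument, Lemma~\ref{periodiccurves}: intersecting the periodic base curves with $K_X$, using $f^*K_X=K_X-R_f$, and letting finiteness of $\cCper^+$ force $(C_n\cdot R_f)=0$, whence $C\cap\Ind_{f^{-1}}=\emptyset$ and in fact $F=0$; only then does Corollary~\ref{C201} conclude $\cCper^+=\cCper^-$. Your later ``by symmetry $C\cap\Ind_{f^{-1}}=\emptyset$'' in the proof of~(1) presupposes $C\in\cC^-$, so it cannot repair the circularity. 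Some argument that controls the intersection of $C$ with the exceptional locus, rather than only with nef classes, appears unavoidable here.

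A secondary issue is your sketch of~(4d). The uniform lower bound $(\theta^+\cdot D)\geq\delta>0$ for integral $D\notin\cC^+$ is fine, but by itself it does not let you pass from $\lambda^{-n}(f^n)^*[C]\to c\theta^+$ to $(f^{n*}C\cdot D)\geq0$ for a single $n$ and all such $D$: the error term $\bigl(\lambda^{-n}(f^n)^*[C]-c\theta^+\bigr)\cdot D$ is not controlled uniformly in $D$, and the curves in the support of $(f^n)^*C$ vary with $n$. The paper's Proposition~\ref{bigandnef} normalizes $w_D:=(D\cdot\theta^+)^{-1}[D]$ and shows, using the bound $\zeta\leq(D\cdot D)\leq -1$ from Lemma~\ref{L101} together with the decomposition orthogonal to $\theta^+$, that these classes lie in a compact subset of $\Pic(X)_\R$; that compactness is what makes the limiting argument uniform, and your sketch does not supply a replacement for it.
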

Note that upon passing to an iterate we will have $N=1$ and $C_n=C$ for all $C\in\cCper$.

Most of the statements in Theorem~\ref{basecurvesmain} are relatively
straightforward, but as we have not been able to locate them in the
literature, we provide complete proofs. The most delicate part is~(4d), which 
we prove in~\S\ref{AGpreimages}. 

In~\S\ref{S205} we use Theorem~\ref{basecurvesmain} to study the
Zariski closure of forward orbits, and in~\S\ref{AGdecomp}, 
we use Theorem~\ref{basecurvesmain} to obtain a 
decomposition of \(\theta^+\) in terms of effective divisor classes 
that are all either big and nef or periodic under~\(f^*\).

\subsection{Types of base curves}\label{AGmorefacts}
We let \(\cCexc^{\pm} \subseteq \cC^{\pm}\) be the set of all base curves that are \(f^{\pm m}\)-exceptional for some \(m \in \N\), and we let \(\cCper^{\pm} = \cC^{\pm} \smallsetminus \cCexc^{\pm}\).
\begin{lemma}\label{basecurvetypes}
For \(C \in \cC^+\) we have 
\(C \cap \Ind_{f^n} = \emptyset\) for all \(n \in \N\). 
\end{lemma}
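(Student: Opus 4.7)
The plan is a proof by contradiction. Suppose \(x \in C \cap \Ind_{f^n}\) for some \(n \geq 1\). Since \(f^n\) is algebraically stable, satisfies (AS2) by the preceding proposition, and has the same invariant class \(\theta^+\) up to scaling (so the same base curves), it suffices to run the argument in the case \(n=1\) and then apply it to \(f^n\) in place of \(f\) for general \(n\). The crucial intermediate assertion I will aim for is the divisor inequality
\begin{equation*}
  f_*[C] \ \geq\ f(x)
\end{equation*}
of effective divisors on \(X\). Granting this, intersecting with \(\theta^+\) yields the contradiction
\begin{equation*}
  0 \ =\ \lambda(C \cdot \theta^+) \ =\ ([C] \cdot f^*\theta^+) \ =\ (f_*[C] \cdot \theta^+) \ \geq\ (f(x) \cdot \theta^+) \ >\ 0,
\end{equation*}
the last strict inequality being precisely (AS2).

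To prove the divisor inequality I would pass to the resolution \(\pi, g \colon Y \to X\) with \(f = g \circ \pi^{-1}\), presenting \(\pi\) as a sequence of point blow-ups. By definition \(f_*[C] = g_*\pi^* C\) and \(f(x) = g(\pi^{-1}(x))\); every component of \(f(x)\) is \(g(E)\) for some irreducible \(E \subseteq \pi^{-1}(x)\) that is \emph{not} \(g\)-exceptional. The target inequality therefore reduces to the following multiplicity claim: every \(\pi\)-exceptional curve \(E \subseteq \pi^{-1}(x)\) appears in the effective divisor \(\pi^* C\) with strictly positive multiplicity. Indeed, once this holds, for each component \(D = g(E)\) of \(f(x)\) the non-\(g\)-exceptional witness \(E\) contributes a positive multiple of \(D\) to \(g_*\pi^* C\).

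The multiplicity claim I would establish by induction on the blow-up tower above \(x\). The exceptional divisor of the initial blow-up at \(x\) itself appears in \(\pi^* C\) with multiplicity \(\mathrm{mult}_x(C) \geq 1\). Any later blow-up contributing an exceptional curve over \(x\) is centered at a point \(q\) lying on some previously constructed exceptional curve above \(x\); by the inductive hypothesis that earlier curve already has positive multiplicity in the current total transform of \(C\), so the multiplicity of that total transform at \(q\) is positive, and this equals the multiplicity of the new exceptional divisor in the further pullback. This combinatorial multiplicity bookkeeping is the only real technical content of the argument, and I expect it to be the main, though essentially routine, obstacle; everything else reduces to invoking the adjoint relation \((f^*\alpha \cdot \beta) = (\alpha \cdot f_*\beta)\) together with (AS2).
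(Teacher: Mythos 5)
Your proof is correct and follows essentially the same approach as the paper: one intersects the effective divisor inequality \(f^n_*C \geq f^n(x)\) with the nef class \(\theta^+\) and uses adjointness of \(f^{n*}\) and \(f^n_*\) to get \(0 = \lambda^n(C\cdot\theta^+) = (f^n_*C\cdot\theta^+) \geq (f^n(x)\cdot\theta^+)\), contradicting~(AS2). The only difference is that the paper simply asserts the inequality \(f^n_*C \geq f^n(x)\), whereas you spell out the underlying multiplicity bookkeeping in the blow-up tower over \(x\); that bookkeeping is correct.
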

\begin{proof}
  For every \(n \in \N\),
  \[0 = \lambda^n(C \cdot \theta^+) = (C \cdot \lambda^n\theta^+) = (C \cdot (f^n)^*\theta^+) = (f^n_*C \cdot \theta^+).\]
  If there were some \(x \in C \cap \Ind_{f^n}\), then \(f^n_*C \geq f^n(x)\) and
  \[0 = (f^n_*C \cdot \theta^+) \geq (f^n(x) \cdot \theta^+),\]
  which would contradict (AS2). So $C\cap\Ind_{f^n}=\emptyset$.
\end{proof}
\begin{lemma}\label{perbasecurve}
  For any \(C \in \cCper^+\), we have \(f(C)\in\cCper^+\).
\end{lemma}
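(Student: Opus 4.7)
The plan is to establish three things: that $f(C)$ is a well-defined irreducible curve, that it lies in $\cC^+$, and that it is not $f^m$-exceptional for any $m \ge 1$.

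First, since $C \in \cCper^+ \subseteq \cC^+$, Lemma~\ref{basecurvetypes} yields $C \cap \Ind_f = \emptyset$, so $f|_C$ extends to a morphism on all of $C$. Because $C \notin \cCexc^+$, in particular $C$ is not $f$-exceptional, so the image $D := f(C)$ is an irreducible curve.

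Next, I would use the resolution $\pi, g: Y \to X$ from \S\ref{BGactions} to compute $f_*C$. Since $C \cap \Ind_f = \emptyset$, the pullback $\pi^*C$ equals the strict transform $\tilde C \subset Y$; and since $\tilde C$ is not $g$-exceptional (it maps onto $D$), $g_*\tilde C = m\, D$ for some integer $m \ge 1$. The adjointness of $f^*$ and $f_*$ under the intersection form then gives
\[m(D \cdot \theta^+) = (f_*C \cdot \theta^+) = (C \cdot f^*\theta^+) = \lambda(C \cdot \theta^+) = 0,\]
so $(D \cdot \theta^+) = 0$ and $D \in \cC^+$.

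The main step is showing $D \notin \cCexc^+$, and I would proceed by contradiction. Suppose $D$ is $f^m$-exceptional for some $m \ge 1$, so that $f^m$ collapses $D \smallsetminus \Ind_{f^m}$ to a single point $p$. Lemma~\ref{basecurvetypes} gives $C \cap \Ind_{f^{m+1}} = \emptyset$, so $f^{m+1}|_C$ is a morphism on the irreducible curve $C$. Because $\Ind_{f^m}$ is finite and $f|_C : C \to D$ is nonconstant, the set of $c \in C$ with $f(c) \in \Ind_{f^m}$ is finite; for every other $c \in C$, the rational-map identity $f^{m+1} = f^m \circ f$ specializes to the pointwise equality $f^{m+1}(c) = f^m(f(c)) = p$. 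By continuity on the irreducible curve $C$, one concludes $f^{m+1}|_C \equiv p$, so $C$ is $f^{m+1}$-exceptional, contradicting $C \in \cCper^+$.

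The chief technical point is the pointwise identification $f^{m+1}(c) = f^m(f(c))$ on a cofinite subset of $C$, which is what allows the collapse of $D$ by $f^m$ to force the collapse of $C$ by $f^{m+1}$; everything else is bookkeeping with the intersection form and the definitions.
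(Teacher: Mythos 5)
Your proof is correct and its core computation---using $C\cap\Ind_f=\emptyset$ from Lemma~\ref{basecurvetypes}, then the adjointness $(f_*C\cdot\theta^+)=(C\cdot f^*\theta^+)=\lambda(C\cdot\theta^+)=0$---is exactly what the paper does. Where you genuinely add something is the last step: the paper dismisses the claim $f(C)\notin\cCexc^+$ with the parenthetical remark ``by definition,'' whereas you supply the actual argument: if $f(C)$ were $f^m$-exceptional, then since $C\cap\Ind_{f^{m+1}}=\emptyset$ and $f^{m+1}=f^m\circ f$ pointwise on a cofinite subset of $C$, the morphism $f^{m+1}|_C$ would be constant, making $C$ itself $f^{m+1}$-exceptional and contradicting $C\in\cCper^+$. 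This is the reasoning the authors presumably have in mind (it is the same mechanism as the remarks opening \S\ref{AGpreimages} about strict transforms and iterated exceptionality), but it is not literally ``by definition,'' and your spelled-out version is the more honest presentation. One small point: your multiplicity $m$ in $g_*\tilde C=mD$ is in fact $1$ since $g$ is birational and $\tilde C$ is not $g$-exceptional, but allowing $m\ge1$ is harmless for the conclusion $(D\cdot\theta^+)=0$.
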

\begin{proof}
  By Lemma~\ref{basecurvetypes}, $C\cap\Ind_f=\emptyset$. Since $C\not\in\cCexc^+$,
  $f_*C=f(C)$ is an irreducible curve, and
  \[(f(C) \cdot \theta^+) = (f_*C \cdot \theta^+) = (C \cdot f^*\theta^+) = (C \cdot \lambda \theta^+) = 0; \]
  so \(f(C)\) is again an element of \(\cCper^+\). (Note that \(f(C) \notin \cCexc^+\) by definition.)
\end{proof}
Thus \(f\) must permute the finitely many elements of \(\cCper^+\).
Clearly, Lemma~\ref{basecurvetypes} and Lemma~\ref{perbasecurve} remain true 
if we replace \(\cC^+\) by \(\cC^-\), \(\cCper^+\) by \(\cCper^-\),
and \(f\) by \(f^{-1}\).

\begin{lemma}\label{periodiccurves}
  For any \(C \in \cCper^+\), \(C \cap \Ind_{f^{-n}} = \emptyset\) for all \(n \in \N\).
\end{lemma}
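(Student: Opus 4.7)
The plan is to reduce the claim to Lemma~\ref{basecurvetypes} applied to $f^{-1}$. Since $f^{-1}$ also satisfies~(AS1) and~(AS2), and since its invariant class $\theta^+(f^{-1})$ equals $\theta^-(f)$ up to scaling, that lemma will yield $C \cap \Ind_{f^{-n}} = \emptyset$ for every $n \in \N$ as soon as we verify $(C \cdot \theta^-) = 0$, i.e., $C \in \cC^-$. So the task is to establish this orthogonality relation.

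The first step is to observe that pushforward permutes the finite set $\cCper^+$. Finiteness comes from Proposition~\ref{Kawdecomp}; invariance is Lemma~\ref{perbasecurve}; and injectivity holds because, for $C \in \cCper^+$, the pushforward $f_*C$ coincides with the strict transform $f(C)$, which uniquely determines $C$ as the strict transform of $f(C)$ under $f^{-1}$ among non-$f$-exceptional prime divisors. Consequently, there exists $p \ge 1$ with $(f_*)^p C = C$.

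The second step is the eigenvalue relation $f^* \theta^- = \lambda^{-1}\theta^-$. This is a consequence of~(AS1), which in the form $(f^{-1})^* = (f^*)^{-1}$ combined with the identity $f_* = (f^{-1})^*$ from~\S\ref{BGactions} gives $f^* \circ f_* = \mathrm{id}$ on $\Pic(X)_\R$; applying $f^*$ to $f_* \theta^- = \lambda \theta^-$ then yields $\theta^- = \lambda f^*\theta^-$. Iterating produces $(f^p)^*\theta^- = \lambda^{-p}\theta^-$, and by the adjointness $(f_*\alpha \cdot \beta) = (\alpha \cdot f^*\beta)$ together with the periodicity of $C$,
\[
(C \cdot \theta^-) = ((f^p)_* C \cdot \theta^-) = (C \cdot (f^p)^*\theta^-) = \lambda^{-p}(C \cdot \theta^-).
\]
Since $\lambda > 1$, this forces $(C \cdot \theta^-) = 0$, and the conclusion follows from Lemma~\ref{basecurvetypes} applied to $f^{-1}$.

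I do not foresee any substantial obstacle; the whole argument reduces to the observation that the $f^*$-eigenvalue $\lambda^{-1}$ of $\theta^-$ has modulus strictly less than $1$, while the eigenvalues of the finite-order permutation action of $f_*$ on $\cCper^+$ are roots of unity, so the two spectra are incompatible.
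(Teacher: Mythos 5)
Your proposed route has a genuine gap at the second step: the eigenvalue relation $f^*\theta^- = \lambda^{-1}\theta^-$ does not hold. You derive it from $f^*\circ f_* = \mathrm{id}$, reading the paper's statement ``$(f^n)^* = (f^*)^n$ for $n\in\Z$'' as asserting that $f_*$ is the inverse of $f^*$; but for a strictly birational map $f^*$ is not a lattice isometry, and $f_*\neq(f^*)^{-1}$. The cleanest counterexample is a H\'enon map on $X=\mathbb{P}^2$: it satisfies~(AS1), one has $\Pic(X)_\R=\R H$ with $\theta^+=\theta^-=H$, and $f^*H = f_*H = \lambda H$, so $f^*f_*=\lambda^2\cdot\mathrm{id}$ and $f^*\theta^-=\lambda\theta^-$, not $\lambda^{-1}\theta^-$. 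The correct content of the identity $f_*=(f^{-1})^*$ together with~(AS1) is only $(f^{-n})^* = (f_*)^n$ for $n\ge0$; it gives no formula for $f^*\theta^-$.

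The orthogonality $(C\cdot\theta^-)=0$ that you are after is indeed true, but in the paper it is derived in Corollary~\ref{C201} \emph{from} Lemma~\ref{periodiccurves}, not the other way around: one first needs $f^{n*}C=C$ (which requires exactly $C\cap\Ind_{f^{-n}}=\emptyset$), and then $(C\cdot\theta^-)=(f^{n*}C\cdot\theta^-)=(C\cdot f^n_*\theta^-)=\lambda^n(C\cdot\theta^-)$ forces vanishing. Replacing the pullback of $C$ by the pushforward, as you do, moves the operator onto $\theta^-$ as $f^{n*}\theta^-$, which is not proportional to $\theta^-$. The paper sidesteps this circularity entirely by pairing the cycle $C_n:=f^n_*C$ against the canonical class: $(C_{n+1}\cdot K_X)=(C_n\cdot K_X)-(C_n\cdot R_f)$, and periodicity of the cycle forces $(C_n\cdot R_f)=0$, so each $C_n$ avoids $\Supp(R_f)$; then any point of $\Ind_{f^{-1}}$ on $C_{n+1}$ would produce a point of $C_n$ on an $f$-exceptional curve, a contradiction. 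That use of $K_X$ and $R_f$ is the key idea your argument is missing.
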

\begin{proof}
  For $n\ge 0$ there exists $C_n\in\cCper^+$ such that 
  $f^n(C)=f^n_*C=C_n$. Thus
  \begin{equation*}
    (C_{n+1}\cdot K_X) 
    = (f_*C_n\cdot K_X) 
    = (C_n \cdot f^*K_X) 
    = (C_n \cdot K_X) - (C_n \cdot R_f)
    \le (C_n \cdot K_X),
  \end{equation*}
  with equality iff $C_n$ does not intersect the support of $R_f$;
  indeed, $C_n$ and $R_f$ are effective and $C_n$ is not an 
  irreducible component of $R_f$.

  Since $\cCper^+$ is finite, there exists $N>0$ such that $C_N=C$.
  Thus $(C_n\cdot K_X)=(C_{n+1}\cdot K_X)$ for $0\le n<N$, 
  so $C_n$ does not intersect the support of $R_f$ for $0\le n<N$.

  If there were a point \(x \in C_{n+1}\cap \Ind_{f^{-1}}\) for some $n$, 
  then \(f^{-1}(x)\) would be contained in the support of \(R_f\) 
  and also contain a point in \(C_n\), a contradiction. 
  So \(f^{-1}\) is defined on all of \(C_{n+1}\) and
  \(f^{-1}(C_{n+1})=f^*C_{n+1}=C_n\).
  As a consequence, $C\cap\Ind_{f^{-n}}=\emptyset$ for all \(n \in \N\).
\end{proof}
\begin{corollary}\label{C201}
  We have $\cCper^+=\cCper^-=:\cCper$. 
  For any \(C \in \cCper\), \(C \cap \Ind_{f^n} = C\cap R_{f^n}=\emptyset\) 
  for all \(n \in \Z\). 
\end{corollary}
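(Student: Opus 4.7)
The plan is to leverage the finiteness of $\cCper^+$ together with the orbit-pullback identity $f^* C_{n+1} = C_n$ already established inside the proof of Lemma~\ref{periodiccurves}: that identity lets us trade the $\theta^+$-condition for the $\theta^-$-condition on periodic base curves, after which the indeterminacy and ramification statements fall out of the periodicity of the $K_X$-pairing along the orbit.

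For the first claim, fix $C \in \cCper^+$ and, by Lemma~\ref{perbasecurve}, pick $N \ge 1$ so that the orbit $C = C_0, C_1, \dots, C_N = C$ closes up inside $\cCper^+$. Iterating $f^* C_{n+1} = C_n$ gives $f^{N*}C = C$, and hence
\[
(C \cdot \theta^-) = (f^{N*}C \cdot \theta^-) = (C \cdot f^N_*\theta^-) = \lambda^N (C \cdot \theta^-),
\]
which forces $(C \cdot \theta^-) = 0$ since $\lambda > 1$; thus $C \in \cC^-$. Because $f^{-m}$ is defined on $C$ (Lemma~\ref{periodiccurves}) and sends $C$ onto the irreducible curve $C_{-m \bmod N}$, the curve $C$ cannot be $f^{-m}$-exceptional, so in fact $C \in \cCper^-$. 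The reverse inclusion is the symmetric argument with $f$ replaced by $f^{-1}$, giving $\cCper^+ = \cCper^- =: \cCper$.

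For $C \in \cCper$, the indeterminacy statement $C \cap \Ind_{f^n} = \emptyset$ for all $n \in \Z$ is immediate from Lemma~\ref{basecurvetypes} applied to $f$ (for $n \ge 0$) and to $f^{-1}$ (for $n \le 0$). For the ramification assertion I would compute, using $[R_{f^n}] = K_X - (f^n)^*K_X$ and the adjoint relation between $(f^n)^*$ and $(f^n)_*$,
\[
(C \cdot R_{f^n}) = (C \cdot K_X) - ((f^n)_*C \cdot K_X) = (C \cdot K_X) - (C_n \cdot K_X),
\]
and then invoke the observation from the proof of Lemma~\ref{periodiccurves} that $(C_n \cdot K_X)$ is constant along the periodic orbit. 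Hence $(C \cdot R_{f^n}) = 0$; since $C$ is not $f^n$-exceptional (because $f^n_*C = C_n$ is a curve), $C$ is not a component of the effective divisor $R_{f^n}$, and the vanishing of the intersection forces $C \cap R_{f^n} = \emptyset$. Negative $n$ is handled by the same argument applied to $f^{-1}$. Nothing in this strategy is delicate: the real content sits in Lemmas~\ref{basecurvetypes}--\ref{periodiccurves}, and the corollary is essentially a bookkeeping exercise on information latent in their proofs, the one subtlety worth flagging being the use of $(f^n)_* = (f_*)^n$ on $\Pic(X)_\R$, which holds by~(AS1).
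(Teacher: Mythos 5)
Your proposal is correct, and the core argument matches the paper's. The derivation of $\cCper^+ = \cCper^-$ is the same: extract $f^{N*}C = C$ from the identity $f^*C_{n+1} = C_n$ established inside the proof of Lemma~\ref{periodiccurves}, then use adjointness and $f_*\theta^- = \lambda\theta^-$ to force $(C\cdot\theta^-) = 0$, and observe that $C$ cannot be $f^{-m}$-exceptional. The indeterminacy claim $C \cap \Ind_{f^n} = \emptyset$ is also handled essentially as in the paper (which cites Lemmas~\ref{basecurvetypes} and~\ref{periodiccurves}; you route the negative $n$ through the $f^{-1}$-version of Lemma~\ref{basecurvetypes}, which is equivalent once $\cCper^+ = \cCper^-$ is known).

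Where you genuinely diverge is the ramification claim. You compute
\[
(C\cdot R_{f^n}) \;=\; (C\cdot K_X) - (C_n\cdot K_X) \;=\; 0,
\]
invoking the constancy of $(C_n\cdot K_X)$ along the orbit (a fact buried in the proof of Lemma~\ref{periodiccurves}), and then deduce set-theoretic disjointness from the effectivity of $R_{f^n}$ and the fact that $C$ is not one of its components. The paper instead argues geometrically: if $C$ met $\Supp R_{f^n}$, then $f^n(C)\in\cCper$ would meet $\Ind_{f^{-n}}$, contradicting the indeterminacy statement just proved. Your version is in effect a re-run, for $f^n$ in place of $f$, of the same $K_X$-pairing mechanism used in Lemma~\ref{periodiccurves}; it is more self-contained and avoids tracking images of points, at the cost of being slightly longer and requiring that you explicitly note the constancy extends to all $n \ge 0$ by periodicity (as you implicitly do). The paper's route is shorter because it leverages the indeterminacy claim as a black box. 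Both arguments are sound; neither is circular.
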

\begin{proof}
  If $C\in\cCper^+$, then there exists $n\ge1$ such that $f^{n*}C=f^n(C)=C$.
  Then Lemma~\ref{periodiccurves} shows that $C=f^{-n}(C)=f^n_*C$. So
  \begin{equation*}
    (C\cdot\theta^-)
    =(f^{n*}C\cdot\theta^-)
    =(C\cdot f^n_*\theta^-)
    =\lambda^n(C\cdot\theta^-);
  \end{equation*}
  hence $(C\cdot\theta^-)=0$ and $C\in\cCper^-$. 
  We have proved $\cCper^+\subset\cCper^-$ and the reverse inclusion
  follows by considering $f^{-1}$.

  Lemmas~\ref{basecurvetypes} and~\ref{periodiccurves} now give
  $C\cap\Ind_{f^n}=\emptyset$ for all $n\in\Z$.
  Finally, if $C\cap R_{f^n}\ne\emptyset$, then 
  $f^n(C)\cap\Ind_{f^{-n}}\ne\emptyset$, 
  which is impossible since $f^n(C)\in\cCper$.
\end{proof}
All the assertions in Theorem~\ref{basecurvesmain} now follow easily,
with the exception of~(4d) (and its analogue for $f^{-1}$), 
which will be proved in the next section: see Proposition~\ref{bigandnef}.

\subsection{Preimages of base curves}\label{AGpreimages}
For a prime divisor \(D\) on \(X\), the strict transform \(f^\#D\) cannot be an \(f\)-exceptional curve. On the other hand, \(f^\#D\) is an \(f^2\)-exceptional curve if and only if it is non-zero and \(D\) is \(f\)-exceptional. For every \(k \in \N\), it follows from (AS1) that \((f^k)^\# = (f^\#)^k\) and every \(f^{k-1}\)-exceptional curve is also \(f^k\)-exceptional. If \(D\) is \(f\)-exceptional, then \((f^k)^\#D\) and \((f^{k'})^\#D\) are distinct whenever \(k' > k \geq 0\) and \((f^k)^\#D \neq 0\).

\begin{lemma}\label{SNC}
For every \(x \in X \smallsetminus \Ind_f\), there are at most two distinct irreducible curves on \(X\) that contain \(x\) and are contained in the support of \(R_f\).
\end{lemma}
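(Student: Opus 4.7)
The plan is to reduce the question to a standard SNC (simple normal crossings) property of exceptional divisors of blow-ups by resolving the rational map $f$.

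Fix $x \in X \smallsetminus \Ind_f$ and choose a resolution $\pi, g \colon Y \to X$ of $f$ as in \S\ref{BGactions}, with $\pi$ chosen to be an isomorphism above $X \smallsetminus \Ind_f$. Then $\pi^{-1}(x) = \{x'\}$ is a single point and $\pi$ is an isomorphism on a neighborhood of $x'$. I would first establish a bijection between $f$-exceptional curves on $X$ passing through $x$ and $g$-exceptional curves on $Y$ passing through $x'$. If $C \ni x$ is $f$-exceptional, its strict transform $\tilde{C}$ under $\pi$ passes through $x'$ (as $\pi$ is iso near $x'$) and satisfies $g(\tilde{C}) = f(C) = \mathrm{pt}$, hence is $g$-exceptional. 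Conversely, any $g$-exceptional curve through $x'$ is not $\pi$-exceptional (once more because $\pi$ is iso near $x'$), so its $\pi$-image is an irreducible curve on $X$ through $x$ which is $f$-exceptional since $f \circ \pi^{-1} = g$ contracts it.

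The second and main step is the standard SNC property: the birational morphism $g \colon Y \to X$ between smooth projective surfaces factors as a finite sequence of point blow-ups, and the resulting total $g$-exceptional divisor on $Y$ is an SNC divisor. This follows by induction on the number of blow-ups — blowing up a smooth point on a smooth surface either (i) adds a new exceptional $\mathbb{P}^1$ disjoint from the previous exceptional configuration, (ii) adds an exceptional $\mathbb{P}^1$ meeting a single smooth component of that configuration transversally at one point, or (iii) separates an existing node of two components and inserts a new $\mathbb{P}^1$ that meets each of the strict transforms transversally at distinct points. In every case, no three irreducible components of the total exceptional divisor ever share a point. This holds in arbitrary characteristic, which matches the standing hypothesis of \S\ref{S202}.

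Combining the two steps: at most two components of the $g$-exceptional divisor pass through $x'$, so at most two $f$-exceptional curves pass through $x$. Since $\Supp(R_f)$ is precisely the union of the $f$-exceptional curves on $X$, the lemma follows. I do not expect a real obstacle here; the content is essentially bookkeeping around the SNC property of exceptional divisors of sequences of blow-ups, and the only slightly delicate point is verifying that the correspondence \(C \leftrightarrow \tilde C\) is genuinely bijective on the \emph{local} set of curves through $x$ (resp.\ $x'$), which is ensured by the choice of $\pi$ as an isomorphism over $X\smallsetminus\Ind_f$.
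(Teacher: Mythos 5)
Your proof is correct and takes essentially the same route as the paper's: both resolve $f$ by $\pi,g\colon Y\to X$, invoke the SNC property of the $g$-exceptional divisor on $Y$ (since $g$ is a composition of point blow-ups), and transport the conclusion to $X$ using that $\pi$ is an isomorphism over $X\smallsetminus\Ind_f$. You spell out the induction establishing the SNC property and the bijection of curves through $x$ and $x'$ in more detail, but the argument is the same.
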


\begin{proof} Since \(g\) is a finite composition of point blow-ups, the support of \(R_g\) (i.e., the union of all of the \(g\)-exceptional curves) is a (not necessarily connected) simple normal crossing divisor on \(Y\). So, for any \(y \in Y\), there are at most two distinct irreducible curves on \(Y\) that contain \(y\) and are contained in the support of \(R_g\). Since the support of \(R_f\) is the image under \(\pi\) of the support of \(R_g\) and \(\pi\) is an isomorphism away from the preimage of \(\Ind_f\), the desired conclusion follows.
\end{proof}

\begin{lemma}\label{L101}
Let \(D\) be an \(f\)-exceptional prime divisor. Then there is \(\zeta_D \in \mathbb{Z}\) such that
\[((f^k)^\#D \cdot (f^k)^\#D) \geq \zeta_D\]
for every \(k \in \N_0\).
\end{lemma}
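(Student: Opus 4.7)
The plan is to apply adjunction on $X$ to $D_k := (f^k)^\# D$, use a pullback--pushforward calculation to rewrite $K_X \cdot D_k$ in terms of the ramification divisor $R_{f^{k+1}}$, and then bound the resulting intersection using the SNC structure of exceptional divisors on a resolution.

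First I would reduce to the case $D_k \neq 0$ for all $k \ge 0$, since otherwise only finitely many $D_k$ are nonzero and $\zeta_D$ can be taken to be the minimum of $0$ and their self-intersections. Since $D$ is $f$-exceptional with $f(D) = \{p\}$ and the forward orbit of $p$ is disjoint from $\Ind_f$ by (AS1), an induction on $k$ shows that each $D_k$ is itself $f^{k+1}$-exceptional (but not $f^k$-exceptional, because $f^k$ maps $D_k$ birationally onto $D$). Adjunction together with the elementary bound $p_a(D_k) \ge 0$ then gives $D_k^2 \ge -2 - K_X \cdot D_k$, so the task reduces to an upper bound on $K_X \cdot D_k$.

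Using $K_X = f^{(k+1)*}K_X + R_{f^{k+1}}$, the adjointness identity $f^{(k+1)*}K_X \cdot D_k = K_X \cdot f^{k+1}_* D_k$, and $f^{k+1}_* D_k = 0$ (since $D_k$ is $f^{k+1}$-exceptional), I obtain the clean identity
\[
K_X \cdot D_k \;=\; R_{f^{k+1}} \cdot D_k.
\]
Because $D_k$ is itself an irreducible component of $R_{f^{k+1}}$ (whose support is the union of all $f^{k+1}$-exceptional curves) with some multiplicity $a_k \ge 1$, writing $R_{f^{k+1}} = a_k D_k + R'_k$ with $R'_k$ effective and not containing $D_k$, I get
\[
(1 + a_k) D_k^2 \;\ge\; -2 - R'_k \cdot D_k.
\]

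The main obstacle is to bound $R'_k \cdot D_k$ suitably against $1 + a_k$, uniformly in $k$. My approach would be to pass to the resolution $\pi_{k+1}, g_{k+1} : Y_{k+1} \to X$ of $f^{k+1}$, where the strict transform $\widetilde D_k$ of $D_k$ under $\pi_{k+1}$ sits inside the $g_{k+1}$-exceptional divisor, whose support is SNC by the argument used in Lemma~\ref{SNC}; in particular $\widetilde D_k$ meets at most two other exceptional components, each transversally. I would then attempt a careful accounting tying both the multiplicity $a_k$ and the local contributions to $R'_k \cdot D_k$ to the depth of blowups along $\widetilde D_k$ in the factorization of $g_{k+1}$, with the goal of producing the desired uniform lower bound on $D_k^2$.
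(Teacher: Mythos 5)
Your proof takes a genuinely different route from the paper's, but it contains a gap that I do not see how to repair. The key step is the claim that $f^{k+1}_* D_k = 0$ because $D_k$ is $f^{k+1}$-exceptional. This does not follow. With the paper's definition $f_* D' = g_*\pi^* D'$, the pushforward of an $f$-exceptional prime divisor $D'$ vanishes only when $D' \cap \Ind_f = \emptyset$: if $D'$ passes through some $x \in \Ind_f$, then $\pi^*D'$ contains $\pi$-exceptional components over $x$ with positive coefficients, and at least one of those is \emph{not} $g$-exceptional (it dominates a component of the curve $f(x)$), so its pushforward under $g$ is a nonzero effective divisor. Thus for your argument you would need $D_k \cap \Ind_{f^{k+1}} = \emptyset$, which is precisely the sort of uniform control over the $D_k$'s that the lemma is trying to establish, and cannot be assumed. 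Without it, the true identity reads $K_X \cdot D_k = K_X \cdot f^{k+1}_*D_k + R_{f^{k+1}}\cdot D_k$, and the extra term $K_X\cdot f^{k+1}_*D_k$ — the canonical class paired against an effective class — has no sign in general, so adjunction does not close the argument. On top of that, you frankly acknowledge that the final bookkeeping step bounding $R'_k\cdot D_k$ against $1+a_k$ is only an attempt; I do not see how to make it uniform in $k$.

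For comparison, the paper's proof avoids both the adjunction and the pushforward computation entirely. It applies Lemma~\ref{SNC} to $R_{f^{k+1}}$ to conclude that each point of the finite set $\Ind_{f^{-1}}$ lies on at most two of the distinct prime divisors $D_j$; hence there is $N$ with $D_k \cap \Ind_{f^{-1}} = \emptyset$ for $k \geq N$. This forces $D_{k+1} = f^*D_k$ for such $k$, and the push-pull (intersection-increasing) inequality $((f^*D_k)\cdot(f^*D_k)) \geq (D_k\cdot D_k)$ makes the self-intersections monotone from that point on. The minimum over $k\le N$ then serves as $\zeta_D$. This sidesteps any genus or canonical bundle estimate, and the crucial uniformity comes for free from the monotonicity rather than from a delicate accounting.
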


\begin{proof} If \((f^{N+1})^\#D=0\) for some \(N \in \N_0\), then
\[\zeta_D = \min \{(D \cdot D),\dots,((f^N)^\#D \cdot (f^N)^\#D),0\}\]
gives the desired inequality.

Otherwise, we write \(D_k := (f^k)^\#D\) for every \(k \in \N_0\); so \(\{D_k\}_{k \in \N_0}\) is a pairwise distinct collection of prime divisors on \(X\). It follows from Lemma~\ref{SNC} (applied to forward iterates of \(f\)) that each \(x \in \Ind_{f^{-1}}\) is contained in at most two distinct elements of \(\{D_k\}_{k \in \N_0}\). So, since \(\Ind_{f^{-1}}\) is finite, there is \(N \in \N\) such that \(D_k \cap \Ind_{f^{-1}} = \emptyset\) for all \(k \geq N\). It follows that
\[D_k = (f^{k-N})^*D_N,\]
and therefore $(D_k \cdot D_k) \geq (D_N \cdot D_N)$, for all \(k \geq N\). 
So
\[\zeta_D = \min \{(D \cdot D),\dots,(D_N \cdot D_N)\}\]
gives the desired inequality.
\end{proof}

Let \(\zeta\) be the minimum value of \(\zeta_D\) where \(D\) ranges over the finitely many irreducible curves contained in the support of \(R_f\). Then \((D' \cdot D') \geq \zeta\) whenever \(D'\) is an irreducible curve contained in the support of \((f^k)^*R_f\) for some \(k \in \N_0\). Note that \(\zeta < 0\) if \(\cCexc^+\) is non-empty.

\begin{proposition}\label{bigandnef}
For any \(C \in \cCexc^+\smallsetminus\cCexc^-\), the line bundle 
\(f^{n*}[C]\) is big and nef for $n\gg0$.
\end{proposition}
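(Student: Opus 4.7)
The plan is to prove nefness and bigness of $f^{n*}[C]$ separately for $n\gg0$, deducing bigness from nefness together with the limit formula~\eqref{e101}. That formula applied to $[C]$ yields $\lambda^{-n}f^{n*}[C]\to c\theta^+$ with $c=(C\cdot\theta^-)/(\theta^+\cdot\theta^-)>0$; positivity of $c$ is part~(4c) of Theorem~\ref{basecurvesmain}, an immediate consequence of $C\in\cCexc^+\smallsetminus\cCexc^-$ (so $C\notin\cC^-$). Combined with $(\theta^+)^2>0$ from strict birationality, this gives $(f^{n*}[C])^2\to\infty$. On a smooth surface, a nef class with positive self-intersection is big, so only nefness requires proof; and since $f^*$ preserves nefness, it suffices to find a single $n$ with $f^{n*}[C]$ nef.

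Fix $n$ and let $E$ be an irreducible curve. I want $(f^{n*}[C]\cdot E)=([C]\cdot f^n_*[E])\geq 0$, split into three cases. \textit{Case (i): $E\in\cC^+$.} If $E\in\cCper$, Lemma~\ref{perbasecurve} and part~(2) give $f^n_*E\in\cCper$ disjoint from $C\in\cCexc^+$, so the intersection vanishes. If $E\in\cCexc^+$, parts~(3) and~(4b) give $f^n_*E=0$ for $n\geq N$. \textit{Case (ii): $E\notin\cC^+$ and $E\neq C_{-n}:=(f^n)^\#C$.} Every $f^n$-exceptional curve $F$ lies in $\cC^+$, since $(\theta^+\cdot F)=\lambda^{-n}(\theta^+\cdot f^n_*F)=0$; so $E$ is not $f^n$-exceptional. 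Because $f^n$ is birational, $C_{-n}$ is the unique non-$f^n$-exceptional irreducible curve with image $C$, so $f^n(E)\neq C$. Writing $f^n_*E=f^n(E)+\Gamma$ with $\Gamma$ a nonnegative combination of $f^{-n}$-exceptional curves, the assumption $C\notin\cCexc^-$ rules out $C$ as a component of $\Gamma$ or equal to $f^n(E)$, so $(C\cdot f^n_*E)\geq 0$.

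The crucial case is $E=C_{-n}$. Naive adjointness suggests $(f^{n*}[C]\cdot C_{-n})=(C\cdot C)<0$; the saving positive contribution comes from the indeterminacy $C_{-n}\cap\Ind_{f^n}$. Decompose $f^{n*}[C]=[C_{-n}]+[E_n]$ with $E_n$ effective and supported on $f^n$-exceptional curves, all of which lie in $\cC^+$. I use three bounds: (a)~$(E_n)^2\leq 0$, since the intersection form on $\theta^{+\perp}$ is negative definite by Hodge index (as $(\theta^+)^2>0$), and $\operatorname{span}(\cC^+)\subset\theta^{+\perp}$; (b)~$(C_{-n}\cdot E_n)\geq 0$, since $C_{-n}$ is not a component of $E_n$; and (c)~$(C_{-n})^2\geq\zeta$ uniformly in $n$, from the paragraph preceding the proposition: letting $m\geq 1$ be minimal with $f^m_*C=0$, the curve $C'':=f^{m-1}(C)$ is $f$-exceptional, and $C_{-n}=(f^{n+m-1})^\#C''\subset\Supp\bigl((f^{n+m-1})^*R_f\bigr)$. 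Expanding $(f^{n*}[C])^2=(C_{-n})^2+2(C_{-n}\cdot E_n)+(E_n)^2$ and invoking~(a) and~(c), we obtain $(C_{-n}\cdot E_n)\geq\tfrac{1}{2}\bigl((f^{n*}[C])^2-\zeta\bigr)\to\infty$. Hence
\[(f^{n*}[C]\cdot C_{-n})=(C_{-n})^2+(C_{-n}\cdot E_n)\geq \zeta+(C_{-n}\cdot E_n)\longrightarrow\infty,\]
which is nonnegative for $n\gg 0$. The main obstacle is this case~(iii): it demands coupling the Hodge-index negativity of $\cC^+$, the uniform floor $(C_{-n})^2\geq\zeta$ inherited from Lemma~\ref{L101}, and the divergence $(f^{n*}[C])^2\to\infty$ in order to overwhelm the negative $(C\cdot C)$ by the contribution of $E_n$.
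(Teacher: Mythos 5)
Your reduction of bigness to nefness via the limit formula~\eqref{e101} matches the paper, as does the use of the uniform floor $\zeta$ from Lemma~\ref{L101}. However, the argument for nefness hinges on the assertion that ``every $f^n$-exceptional curve $F$ lies in $\cC^+$, since $(\theta^+\cdot F)=\lambda^{-n}(\theta^+\cdot f^n_*F)=0$,'' and this assertion is unjustified. The final equality requires $f^n_*F=0$, which need not hold: if $F$ meets $\Ind_{f^n}$, then $f^n_*F = g_*\pi^*F$ picks up contributions from $\pi$-exceptional curves mapping under $g$ onto $f^{-n}$-exceptional curves, so $f^n_*F$ is a nonzero effective divisor supported on $f^{-n}$-exceptional curves. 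Those curves need not lie in $\cC^+$; on the contrary, (AS2) forces $(\theta^+\cdot f^n(p))>0$ for every $p\in\Ind_{f^n}$, so some component of $f^n(p)$ has positive intersection with $\theta^+$. Consequently $(\theta^+\cdot F)$ can be strictly positive for an $f^n$-exceptional $F$. This kills step (a) in your case~(iii): the divisor $E_n$ may have a component with $(\cdot\theta^+)>0$, so $E_n\notin\theta^{+\perp}$ and the Hodge-index bound $(E_n)^2\leq 0$ fails. Without an upper bound on $(E_n)^2$ of smaller order than $(f^{n*}[C])^2\sim\lambda^{2n}$, the chain of inequalities in case~(iii) does not yield $(f^{n*}[C]\cdot C_{-n})\geq 0$. (A secondary concern: in case~(ii), $C\notin\cCexc^-$ does not by itself preclude $C$ from being $f^{-n}$-exceptional, since $\cCexc^-$ is defined as $\cC^-\cap\{f^{-m}\text{-exceptional}\}$ and $C\notin\cC^-$ already; the hypothesis you invoke is weaker than what that step needs.)

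The paper's proof handles precisely the case you try to rule out. After reducing to irreducible $D$ in the support of $f^{n*}C$ with $(D\cdot D)<0$, it disposes of $D\in\cCper$ and $D\in\cCexc^+$, and then treats the genuinely problematic case $D\notin\cC^+$, i.e.\ $(D\cdot\theta^+)>0$, by a compactness argument: the normalized classes $[D]/(D\cdot\theta^+)$ lie in a fixed compact set $W\subset\Pic(X)_\R$ (using $\zeta\leq(D\cdot D)\leq -1$ together with discreteness of $\Pic(X)$ to get a uniform lower bound on $(D\cdot\theta^+)$), while $\lambda^{-n}f^{n*}[C]$ converges to a positive multiple of $\theta^+$ and hence eventually lands in a neighborhood $U$ of $\theta^+$ with $(\beta\cdot w)\geq 0$ for all $\beta\in U$, $w\in W$. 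Your attempt to bypass this by an explicit self-intersection estimate would only work if $(E_n)^2$ admitted a bound independent of $n$, which would require exactly the claim that fails.
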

With this result in hand (and the analogue for $f^{-1}$), 
the proof of Theorem~\ref{basecurvesmain} is
complete.
\begin{proof} 
Our assumptions imply \(C \notin \cC^-\), that is, \((C \cdot \theta^-) >0\). 
We have
\[\lim_{n \rightarrow \infty} \lambda^{-n} (f^n)^*[C] = \frac{(C \cdot \theta^-)}{(\theta^+ \cdot \theta^-)}\theta^+.\]
Since \((\theta^+ \cdot \theta^+) > 0\), the continuity of the intersection product implies
\((f^{n*}C \cdot f^{n*}C)>0\) for \(n\gg0\).

It remains to be shown that $f^{n*}[C]$ is nef for $n\gg0$, that is, 
$(f^{n*}C\cdot D)\ge0$ for any irreducible curve $D$. We may assume that $D$
is contained in the support of $f^{n*}C$ and that $(D\cdot D)<0$. 
For such $D$ we have $\zeta\le(D\cdot D)\le -1$.
Furthermore, it is clear that $D\not\in\cCper$ and
$$(D \cdot f^{n*}C) = (f_*D \cdot (f^{n-1})^*C) = 0$$
for $n\ge1$ and $D \in \cCexc^+$. Thus we may assume 
$D\not\in\cC^+$, so that $(D\cdot\theta^+)>0$.
Let $\mathcal{D}$ be the set of irreducible curves $D$ on \(X\) with \(\zeta \leq (D \cdot D) \leq -1\) 
and \((D \cdot \theta^+)>0\).

To simplify the notation, we normalize $\theta^+$ by $(\theta^+\cdot\theta^+)=1$.
Given $s\le-1$ and $t\le0$, consider subsets $V_s$ and $W_t$ of $\Pic(X)_{\mathbb{R}}$ defined by 
\begin{align*}
  V_s &:= \{w\in\Pic(X)_{\R}\mid s\leq (w \cdot w) \leq -1 , 0 \leq (w \cdot \theta^+) \leq 1\}\\
  W_t &:= \{w\in\Pic(X)_{\R}\mid t\leq (w \cdot w) \leq 0,\ (w \cdot \theta^+) =1\}.
\end{align*}
We claim that $V_s$ and $W_t$ are compact.
To see this, let \(\{v_1,\dots,v_M\}\) be an orthonormal basis for the negative definite 
space $\theta^{+\perp}\subseteq \Pic(X)_\R$. Using the basis \(\{\theta^+,v_1,\dots,v_M\}\) for $\Pic(X)_{\R}$ we have
\begin{align*}
  V_s &\simeq \{(a_0,\dots,a_M)\in\R^{M+1}
\mid 0 \leq a_0 \leq 1, \, 1+a_0^2 \leq a_1^2 +\dots+ a_M^2 \leq a_0^2-s\},\\
  W_t &\simeq \{(a_0\dots,a_M)\in\R^{M+1}
\mid a_0=1, 1-t\leq a_1^2 + \dots + a_M^2 \le1\},
\end{align*}
from which it clear that $V_s$ and $W_t$ are compact.

Now \(\Pic(X)\) is discrete in \(\Pic(X)_\R\), so it intersects $V_\zeta$ in a finite set. 
It follows that there exists \(\epsilon > 0\) such that \((D\cdot \theta^+)\ge\epsilon\) for all $D\in\mathcal{D}$. 

Given $D\in\mathcal{D}$, set $w_D:=(D\cdot\theta^+)^{-1}[D]$, so that 
$w_D\in W:=W_{\zeta/\epsilon^2}$.
By the continuity of the intersection form, and the compactness of $W$,
there exists a neighborhood $U$ of $\theta^+$ in $\Pic(X)_{\R}$ such that 
$(\beta\cdot w)\ge 0$ for all $w\in W$ and all $\beta\in U$.
For $n\gg0$ and $D\in\mathcal{D}$, suitable positive multiples of $f^{n*}[C]$ and $[D]$
lie in $U$ and $W$, respectively; hence $(f^{n*}C\cdot D)\ge0$, which
completes the proof.
\end{proof}

\subsection{Structure of forward orbits}\label{S205}
\begin{lemma}\label{pointorbits}
  Consider a point $x\in X$ with a well-defined forward orbit, and let $Z$ be the Zariski closure
  of this orbit. Then exactly one of the following assertions holds:
  \begin{itemize}
  \item[(1)] 
    $x$ is preperiodic and hence $Z$ is a finite set;
  \item[(2)]
    $Z$ is a union of curves in $\cCper$;
  \item[(3)] 
    $Z$ is an infinite set intersecting $\Ind_f$, and
    for $n\gg0$, $f^n(x)$ does not belong to any of the curves in \(\cC^+\).
  \end{itemize}
\end{lemma}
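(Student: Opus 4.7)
The plan is to handle the preperiodic case directly (giving~(1)) and, for infinite orbits, to show that the dichotomy between~(2) and~(3) is determined by whether $x$ lies on a curve in $\cCper$. I first prove a backward-stability lemma: if $f^m(x) \in C$ for some $m \geq 0$ and some $C \in \cCper$, then $x \in \cCper$. The key observation is that by Corollary~\ref{C201}, $C \cap \Ind_{f^{-1}} = \emptyset$, so no $f$-exceptional curve $E$ can satisfy $f(E) \in C$ (its image point would have to lie in $\Ind_{f^{-1}}$). Hence the set-theoretic preimage $f^{-1}(C)$ coincides with the predecessor curve of $C$ in the $\cCper$-cycle, and a simple induction on $j$ yields $f^{m-j}(x) \in \cCper$ for all $0 \leq j \leq m$.

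If $x \in \cCper$, then by Lemma~\ref{perbasecurve} and Corollary~\ref{C201} the orbit stays in the finite $f$-cycle through $x$ inside $\cCper$; since the orbit is infinite it hits each curve in the cycle infinitely often, so $Z$ is precisely the union of these curves, yielding~(2). Otherwise the orbit misses $\cCper$ entirely. For any $C \in \cCexc^+$, Theorem~\ref{basecurvesmain}(3),(4) provides $N$ with $C \cap \Ind_{f^N} = \emptyset$ and $f^N(C)$ a single point $p_C$, so any $f^k(x) \in C$ forces $f^{k+N}(x) = p_C$; two such values of $k$ would make $p_C$ periodic and the whole orbit preperiodic, a contradiction. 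Since $\cCexc^+$ is finite, only finitely many iterates land in $\cCexc^+$; combined with the avoidance of $\cCper$ this gives $f^n(x) \notin \cC^+$ for $n \gg 0$.

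It remains to show $Z \cap \Ind_f \neq \emptyset$. Assume for contradiction that $Z \cap \Ind_f = \emptyset$. Every irreducible curve component $C$ of $Z$ then avoids $\Ind_f$, and must contain infinitely many orbit points (else it could be shaved off $Z$ while still preserving the orbit closure). The image $f(C)$ is then a curve: if it collapsed to a single point $p$, that $p$ would appear infinitely often in the orbit, again forcing preperiodicity. Since $f(C)$ is an irreducible curve meeting the orbit infinitely often, it lies in $Z$ and is itself a curve component; using the generic injectivity of $f$, we see that $f$ permutes the finite set of curve components of $Z$. Hence each such $C$ is $f^k$-periodic for some $k \geq 1$, and the equation $(C \cdot \theta^+) = (f^k_* C \cdot \theta^+) = \lambda^k (C \cdot \theta^+)$ together with $\lambda > 1$ gives $(C \cdot \theta^+) = 0$, i.e., $C \in \cC^+$. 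Since the cycle consists of curves only --- never collapsed forward or backward --- $C$ is not $f^{\pm n}$-exceptional for any $n$, so $C \in \cCper$, contradicting the avoidance of $\cCper$. The main obstacle is this last argument: one must combine the $\lambda$-equivariance of $\theta^+$ with Theorem~\ref{basecurvesmain}'s careful bookkeeping of exceptional behavior, all while working purely set-theoretically on $Z$.
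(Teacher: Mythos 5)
Your proof is correct and follows the same overall strategy as the paper's. The notable addition is your backward-stability lemma for $\cCper$ (using $C\cap\Ind_{f^{-1}}=\emptyset$ from Corollary~\ref{C201} to rule out exceptional curves collapsing into $C$), which makes case~(2) fully rigorous: the paper simply asserts that if $f^n(x)\in\cCper$ for some $n$ and the orbit is not preperiodic then we are in case~(2), but one must also rule out the possibility that the finitely many ``prefix'' points $x,\dots,f^{n-1}(x)$ lie off $\cCper$, in which case $Z$ would pick up isolated points and fail to be exactly a union of curves in $\cCper$; your lemma closes this gap. Your pigeonhole argument on $\cCexc^+$ and the intersection computation $(C\cdot\theta^+)=\lambda^k(C\cdot\theta^+)$ forcing $C\in\cC^+$ for any $f$-periodic curve component of $Z$ mirror the paper, and your more explicit treatment of why $f$ permutes the curve components of $Z$ (via generic injectivity and the observation that a collapse would force preperiodicity) elaborates helpfully on the paper's terse ``Since $f_*Z=f(Z)=Z$, $C$ cannot be $f$-exceptional.'' Two minor remarks: in the last step you only need that $C$ is not $f^n$-exceptional for $n\ge1$ (i.e.\ $C\notin\cCexc^+$) to conclude $C\in\cCper$, so the backward non-exceptionality is superfluous; and having already shown $f^n(x)\notin\cC^+$ for $n\gg0$, the existence of $C\in\cC^+$ containing infinitely many orbit points is already the contradiction, making the final refinement to $\cCper$ redundant (though harmless).
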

\begin{proof} 
  Let $I\subset\N$ be the times $n$ such that $f^n(x)$ belongs to a curve 
  in $\cC^+$. 
  First suppose $I$ is infinite.
  If $f^n(x)$ lies on a curve $C\in\cCper$ for some $n\in I$, then either the orbit is 
  preperiodic or we are in case~(2).
  Now assume that the orbit is disjoint from the curves in $\cCper$.
  Then there exist $m,n\in I$ with $0\le m<n$, and a curve $C\in\cCexc^+$
  such that $f^m(x),f^n(x)\in C$. This implies that the orbit is preperiodic
  (satisfying \(f^{m+1}(x)=f^{n+1}(x)\)).

  Finally suppose $I$ is finite. If $Z$ is a finite set, then the orbit is preperiodic, so
  suppose $Z$ has dimension at least one. We must show that $Z$ intersects $\Ind_f$, 
  so that we are in case~(3). This is clear if $Z=X$. Now suppose $Z$ is a curve disjoint from 
  $\Ind_f$ and let $C$ be one of its irreducible component curves. Since $f_*Z=f(Z)=Z$, $C$ cannot be $f$-exceptional and $f(C)$ must be another curve in $Z$.
  It follows that $f^n_*C=C$ for some $n\ge 1$,
  and then 
  \begin{equation*}
    \lambda^n(\theta^+\cdot C)
    =(f^{n*}\theta\cdot C)
    =(\theta^+\cdot f^n_*C)
    =(\theta^+\cdot C);
  \end{equation*}
  hence $(\theta^+\cdot C)=0$ since $\lambda>1$.
  Thus $C\in\cCper$, which contradicts the assumption
  that $I$ is finite.
\end{proof}

\subsection{A new decomposition for the invariant class}\label{AGdecomp}
Next we pull-back the decomposition of $\theta^+$ in Proposition~\ref{Kawdecomp}
to obtain a new decomposition better suitable for height considerations.
\begin{proposition}\label{newdecomp}
  We have a decomposition
  \begin{equation*}
    \theta^+ = \sum_{i=1}^p r_i B_i + L + [\Dper]
    \end{equation*}
    with the following properties:
    \begin{itemize}
    \item[(1)] for each $i$, $r_i>0$; $B_i$ and $f^*B_i$ are base point free;
      and $(B_i\cdot f(x))>0$ for every $x\in\Ind_f$;
    \item[(2)] $L\in\Pic(X)_{\R}$ is nef;
    \item[(3)] $\Dper$ is an effective $\R$-divisor supported on the curves in $\cCper$.
\end{itemize}
\end{proposition}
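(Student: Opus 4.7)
The plan is to refine the Kawaguchi decomposition from Proposition~\ref{Kawdecomp} by peeling off the periodic part, and then to extract base-point-free pieces from the remainder.

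First, I would apply Proposition~\ref{Kawdecomp} to write $\theta^+ = A_0 + [D]$ with $A_0 \in \Pic(X)_\R$ ample and $D$ an effective $\R$-divisor whose support equals $\cC^+$. Using the disjoint splitting $\cC^+ = \cCper \sqcup \cCexc^+$ from Theorem~\ref{basecurvesmain}, I decompose $D = D_1 + \Dper$ with $D_1$ supported on $\cCexc^+$; the piece $\Dper$ is destined to be the third summand. Setting $\beta := \theta^+ - [\Dper] = A_0 + [D_1]$, I would check that $\beta$ is big and nef: bigness is immediate, and the only delicate case $(\beta \cdot C)$ with $C \in \cCexc^+$ reduces to $(\theta^+ \cdot C) = 0$ via Theorem~\ref{basecurvesmain}(2), which gives $([\Dper] \cdot C) = 0$. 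The same disjointness, applied to $f(x)$ for $x \in \Ind_f$ (a union of $f^{-1}$-exceptional curves, all lying in $\cCexc^-$ since $\cCper$ curves are never $f^{-1}$-exceptional by Theorem~\ref{basecurvesmain}(1)), yields $([\Dper] \cdot f(x)) = 0$, so $(\beta \cdot f(x)) = (\theta^+ \cdot f(x)) > 0$ by~(AS2).

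The remaining, central step is to decompose $\beta = \sum_{i=1}^p r_i B_i + L$ with $L$ nef and each $B_i, f^*B_i$ base point free satisfying $(B_i \cdot f(x)) > 0$. My strategy is to build the $B_i$ as integer classes whose direction is close to $\theta^+$; natural candidates are $(f^N)^*H$ for ample $H$ and large $N$, since by~\eqref{e101} these converge in direction to $\theta^+$ and automatically satisfy $((f^N)^*H \cdot C) = (H \cdot f^N_*C) = 0$ on $\cCexc^+$ by Theorem~\ref{basecurvesmain}(3)--(4), preserving nefness of $L$ on these zero-intersection curves when the $r_i$ are small. The positivity $(B_i \cdot f(x)) > 0$ is available because some component of $f(x)$ must lie in $\cCexc^- \smallsetminus \cCexc^+$ (the contribution of $\cC^+$ components to $(\theta^+ \cdot f(x))$ vanishes), and such a component intersects $(f^N)^*H$ strictly positively.

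The main obstacle is ensuring base-point-freeness of both $B_i$ and $f^*B_i$ simultaneously. By Proposition~\ref{P101}(2), sections of an ample class pulled back via generic sections automatically vanish on $\Ind_f$, so the naive pullback linear system is never base-point-free there; iterated pullbacks like $(f^N)^*H$ only compound this problem. Overcoming it will require an explicit construction of the $B_i$—perhaps incorporating components supported on strategic $f^{-1}$-exceptional curves—so that $|f^*B_i|$ admits non-generic sections with no common zeros on $\Ind_f$. This delicate construction, leveraging the finiteness of $\Ind_f$ together with the base-curve dichotomy from Theorem~\ref{basecurvesmain}, is where I expect the real technical weight of the proof to lie.
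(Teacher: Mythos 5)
Your broad strategy runs parallel to the paper's: start from the Kawaguchi decomposition, peel off the periodic part $\Dper$ using Theorem~\ref{basecurvesmain}, and build the $B_i$ as large iterated pullbacks $f^{n*}A_i$ of ample classes. (The paper applies $f^{n*}$ to the entire decomposition of $\lambda^{-n}\theta^+$ at once rather than first forming $\beta = \theta^+ - [\Dper]$, but this is a minor variation.) Your nefness check for $\beta$ is sound. Your positivity argument for $(B_i \cdot f(x))$ is salvageable but imprecise: not every component of $f(x)$ lies in $\cCexc^-$ (components that are not base curves need not lie in $\cC^-$ at all); what you actually need is that no component lies in $\cCper$, which follows from Theorem~\ref{basecurvesmain}(1) since curves in $\cCper$ are disjoint from $\Supp(R_{f^{-1}})$. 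The paper's route is cleaner: $f^n_*f(x)$ is a nonzero effective divisor because $(\theta^+\cdot f^n_*f(x)) = \lambda^n(\theta^+ \cdot f(x)) > 0$ by~(AS2), so $(B_i \cdot f(x)) = (A_i \cdot f^n_*f(x)) > 0$ by ampleness of $A_i$.

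The genuine gap is the final step, where you declare the base-point-freeness of $B_i$ and $f^*B_i$ an open problem demanding a bespoke ``explicit construction.'' No such construction is needed: the paper closes this by invoking Zariski's theorem (\cite[Remark~2.1.32]{Laz}) that a divisor with finite base locus is automatically semiample, so a suitable multiple is base point free. Concretely, after replacing $A_i$ by a very ample multiple, the pullback linear system $f^{n*}|A_i|$ is a sub-system of $|B_i|$ whose base locus lies in $\Ind_{f^n}$; hence $\mathrm{Bs}|B_i| \subseteq \Ind_{f^n}$ is finite, and similarly $\mathrm{Bs}|f^*B_i| \subseteq \Ind_{f^{n+1}}$. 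Zariski then gives an $m$ with $mB_i$ and $mf^*B_i$ base point free, and the factor $m$ is absorbed into $A_i$ by rescaling $r_i$. Your observation that the naive pullback system always has base points at $\Ind_f$ is correct but immaterial: the full linear system of a sufficiently divisible multiple contains far more sections than those pulled back, and Zariski's theorem guarantees they eliminate the finitely many base points. With this standard ingredient in place, your plan goes through.
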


\begin{proof} 
  After scaling, Proposition~\ref{Kawdecomp} yields
  $\lambda^{-1}\theta^+=A+[D]$, where $A\in\Pic(X)_{\R}$ is ample
  and $D$ is an effective $\R$-divisor supported on the curves in $\cC^+$.
  Thus $\theta^+=\lambda^{-1}f^*\theta^+=f^*A+f^*[D]$.
  
  Now use Theorem~\ref{basecurvesmain}. 
  The decomposition $\cC^+=\cCper\cup\cCexc^+$
  gives rise to a decomposition $D=\Dper+\Dexc$.
  Pick $n\ge 1$ such that $f^{-n}(C)=C$ for $C\in\cCper$,
  $f^{n*}C=0$ for $C\in\cCexc^+\cap\cCexc^-$ and 
  $f^{n*}C$ is (big and) nef for $C\in\cCexc^+\smallsetminus\cCexc^-$. 
  Then $f^{n*}[\Dper]=[\Dper]$ and $L:=f^{n*}[\Dexc]$ is nef.

  Finally we consider $f^{n*}A$.
  Write $A=\sum_{i=1}^pr_iA_i$, where $r_i>0$ and $A_i\in\Pic(X)$ is ample.
  This obviously implies $f^{n*}A=\sum_{i=1}^pr_iB_i$, where $B_i=f^{n*}A_i$.
  Clearly $B_i$ is then nef, but may not have the other required properties.
  
  However, we may scale the situation, by which we mean replacing  
  $A_i$ by $mA_i$ and $r_i$ by $r_i/m$,  
  for  a large and divisible integer $m$. 
  In particular, we may assume $A_i$ is very ample. 
  Then the base locus of \(B_i=f^{n*}A_i\) (resp., $f^*B_i=f^{(n+1)*}A_i$) is a subset of 
  \(\Ind_{f^n}\) (resp., $\Ind_{f^{n+1}}$) and hence finite.
  A result by Zariski (see~\cite[Remark~2.1.32]{Laz}) then implies that $mB_i$ and $mf^*B_i$
  are base point free for $m$ sufficiently divisible.
  After scaling, $B_i$ and $f^*B_i$ are base point free for all $i$.
  
  Finally, for any $x\in\Ind_f$, $f(x)$ is a nonzero (reduced) effective divisor
  on $X$. The effective divisor $f^n_*f(x)$ is also nonzero, since we have 
  \begin{equation*}
    (\theta^+\cdot f^n_*f(x))
    =(f^{n*}\theta^+\cdot f(x))
    =\lambda^n(\theta^+\cdot f(x))>0
  \end{equation*}
  by~(AS2). 
  Since $A_i$ is ample, this implies 
  $(B_i\cdot f(x))=(A_i\cdot f^n_*f(x))>0$ for all $i$, which completes the proof.
\end{proof}

\subsection{Examples}\label{examples}

We highlight some existing examples in the literature which exhibit the different types of base curves described in Theorem~\ref{basecurvesmain}.

Consider the involutions of $\mathbb{P}^2$
$$\sigma: [x:y:z] \mapsto [xyz+(-y+z)x^2:xyz+(-x+z)y^2:xyz],$$
$$\tau: [x:y:z] \mapsto [x:bx+(a+1)z-y:z].$$
Bedford and Diller~\cite{BD-B} described the dynamics of $f=\sigma \circ \tau$ for generic $a,b$ such that $f$ is strictly birational with $\lambda(f)>1$. After blow-ups at the points $[1:0:0],[1:b:0],[0:1:0]$ plus another point infinitely near to $[0:1:0]$, $f$ satisfies both (AS1) and (AS2). For this model of $f$, $\cCexc^+ \cup \cCexc^-$ is empty while $\cCper$ contains exactly two curves: the strict transform of $\{z=0\}$ and the exceptional curve over $[0:1:0]$.

Consider also the birational selfmap on $\mathbb{P}^1 \times \mathbb{P}^1$ given in affine coordinates by
$$f: (z,w) \mapsto \left( w+1-\epsilon,z \frac{w-\epsilon}{w+1} \right).$$
Diller and Favre~\cite{DF} described the dynamics of $f$, which is strictly birational with $\lambda(f)>1$ for all but finitely many $\epsilon$. The $f$-exceptional curves are $\{w=\epsilon\}$ and $\{w=-1\}$; the $f^{-1}$-exceptional curves are $\{z=-\epsilon\}$ and $\{z=1\}$. When $\epsilon = 1/k$ with $k \geq 4$ an integer, the forward orbit of $\{w=\epsilon\}$ meets $\Ind_{f^{-1}}$, and blowing up the points in this orbit yields a map satisfying (AS1) and (AS2) with nontrivial $\cC^{\pm}$. In this case,  the strict transform of $\{w=-1\}$ is the unique curve in $\cCexc^+ \smallsetminus \cCexc^-$, the strict transform of $\{z=1\}$ is the unique curve in $\cCexc^- \smallsetminus \cCexc^+$, the strict transform of the line through (1,0) and (0,-1) is the unique curve in $\cCper$, and $\cCexc^+ \cap \cCexc^-$ is empty. The sets $\cC^{\pm}$ are similar if $\epsilon = k/(k+2)$ with $k \geq 3$ an integer (in which case (AS1) and (AS2) are achieved via blow-ups along the forward orbit of $\{w=-1\}$).

\section{A general energy condition}\label{S203}
In this section we formulate and prove a general energy condition for birational surface maps 
defined over number fields. As shown in~\S\ref{S204}, this will imply Theorem~\ref{MainThm}.
We shall also prove Theorem~\ref{HeightThm}, establishing the existence of
a canonical height function.

Let $X$ be a rational surface and $f:X\dashrightarrow X$ be birational map 
of dynamical degree $\lambda>1$, defined over a number field $K$. 
Fix an algebraic closure $\bar K$ of $K$. 
After passing to a finite extension of $K$ inside $\bar K$, we may and will
assume that $\Ind_f$ and $\Ind_{f^{-1}}$ are defined over $K$.
\begin{theorem}\label{T101}
  Assume that $f$ is strictly birational and satisfies~(AS1) and~(AS2). 
  Then, for every $v\in M_K$ and every point $y\in X(K)$ whose forward orbit is disjoint 
  from $\Ind_f$, we have 
  \begin{equation}\label{e103}
    \sum_{k \geq 0} \lambda^{-k} \log \Dist_v\big( f^k(y),\Ind_f \big) > -\infty.\tag{$\mathrm{BD}_\nu$}
  \end{equation}
\end{theorem}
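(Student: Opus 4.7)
The plan is to generalize the $\mathbb{P}^2$ argument of~\S\ref{S301}, using the decomposition $\theta^+=\sum_{i=1}^p r_iB_i+L+[\Dper]$ of Proposition~\ref{newdecomp} in place of the single class $\cO_{\mathbb{P}^2}(1)$. Fix $y\in X(K)$ with forward orbit disjoint from $\Ind_f$, set $y_k:=f^k(y)$, and replace $f$ by a suitable iterate. By Lemma~\ref{pointorbits} I may assume the orbit is infinite and eventually avoids every curve in $\cC^+$; in the other cases (preperiodic, or orbit contained in $\cCper$) Theorem~\ref{basecurvesmain}(1),(2) forces $\Dist_v(y_k,\Ind_f)$ to stay bounded below and~$(\mathrm{BD}_v)$ is immediate.

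For each index $i$ and place $v\in M_K$ I will construct a local transfer function modelled on the $\varphi_v$ of~\S\ref{S301}. Choose spanning sets $\Sigma_i\subset H^0(X,B_i)$ and $\Sigma_i'\subset H^0(X,f^*B_i)$ with $\Sigma_i'\supseteq\{\sigma':\sigma\in\Sigma_i\}$ (via Proposition~\ref{P101}), and fix $\sigma_i\in\Sigma_i$ whose divisor contains no $f^{-1}$-exceptional curve (possible because $B_i$ is base point free). Define
\[
\varphi_{i,v}(z):=h_{\sigma_i,\Sigma_i,v}(f(z))-h_{\sigma_i',\Sigma_i',v}(z).
\]
Three properties should carry through. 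First, Lemma~\ref{L103} gives $\varphi_{i,v}\le C_{i,v}$ with $C_{i,v}=0$ outside a finite $S_i\subset M_K$. Second, the product formula gives $\sum_v\varphi_{i,v}(z)=h_{B_i}(f(z))-h_{f^*B_i}(z)$, which is bounded above by the negativity lemma~\eqref{e102}. Third, since $(B_i\cdot f(x))>0$ for $x\in\Ind_f$ by Proposition~\ref{newdecomp}(1), Proposition~\ref{P101}(2) forces $\sigma_i'$ to vanish at every point of $\Ind_f$; Lemma~\ref{L102} then yields the distance bound $h_{\sigma_i',\Sigma_i',v}(z)\ge-\log\Dist_v(z,\Ind_f)-D_{i,v}$.

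Assemble $\varphi_v:=\sum_i r_i\varphi_{i,v}$ (plus bounded corrections for the $L$ and $\Dper$ pieces of the decomposition) so that $\sum_v\varphi_v(z)=\lambda^{-1}h_{\theta^+}(f(z))-h_{\theta^+}(z)+O(1)$. Telescoping along the orbit gives
\[
\sum_{k=0}^{N-1}\lambda^{-k}\sum_v\varphi_v(y_k)=\lambda^{-N}h_{\theta^+}(y_N)-h_{\theta^+}(y)+O(1),
\]
which stays bounded as $N\to\infty$ because each $h_{B_i}$ is bounded below ($B_i$ is base point free), $\lambda^{-N}h_L(y_N)$ is bounded below (using nefness of $L$ and comparison with an ample class), and $h_{[\Dper]}$ is bounded below on the orbit, which eventually avoids $\cCper=\Supp(\Dper)$ (Proposition~\ref{heightbounds}). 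This simultaneously establishes the limit $\hh^+(y)=\lim_N\lambda^{-N}h_{\theta^+}(y_N)$ required for Theorem~\ref{HeightThm}. Exactly as in~\S\ref{S301}, combining the uniform upper bounds $\varphi_v\le C_v$ (summable over $v$) with this global lower bound propagates to $\sum_k\lambda^{-k}\varphi_v(y_k)>-\infty$ for every individual $v$, and the distance bound on the $\varphi_{i,v}$ converts this into~$(\mathrm{BD}_v)$.

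The main obstacle will be the nef divisor $L$, arising from the curves in $\cCexc^+\smallsetminus\cCexc^-$: unlike the $B_i$, $L$ is not base point free and carries no natural section-level vanishing at $\Ind_f$, so it furnishes no direct distance bound. I plan to handle this using Theorem~\ref{basecurvesmain}(3),(4d), which, after a further iterate, rewrites $L$ as a sum of big-and-nef classes $f^{n*}[C]$ for $C\in\cCexc^+\smallsetminus\cCexc^-$; each such class admits a decomposition as ample plus effective, so its local-height contribution can be absorbed into the bounded corrections above without disturbing the uniform upper bound or the distance bound.
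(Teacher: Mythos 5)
Your overall strategy matches the paper's: use Proposition~\ref{newdecomp} to write $\theta^+=\sum r_iB_i+L+[\Dper]$, reduce (via Lemma~\ref{pointorbits} and Proposition~\ref{heightbounds}) to the case where the orbit avoids $\cC^+$, telescope global heights, absorb the $L$ and $\Dper$ contributions as bounded errors (Lemmas~\ref{nefhtbd} and~\ref{periodichtbd}), and then ``single out'' one place $v$ in the decomposition of $h_{B_i}\circ f-h_{f^*B_i}$ into local heights. The last paragraph's worry about $L$ is a red herring: $L$ is only asked to supply a bounded correction, not a distance bound, so Lemma~\ref{nefhtbd} handles it without any further decomposition.

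There is, however, a genuine gap in your third ``carry-through'' property, and it breaks the final step. You assert the distance estimate $h_{\sigma_i',\Sigma_i',v}(z)\ge-\log\Dist_v(z,\Ind_f)-D_{i,v}$, which is true but is a lower bound on only one of the two terms in $\varphi_{i,v}(z)=h_{\sigma_i,\Sigma_i,v}(f(z))-h_{\sigma_i',\Sigma_i',v}(z)$. This does not yield the inequality you need, namely $\varphi_{i,v}(z)\le\log\Dist_v(z,\Ind_f)+O(1)$, because the other term $h_{\sigma_i,\Sigma_i,v}(f(z))\ge0$ is unbounded above as $\sigma_i(f(z))\to 0$. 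The paper's Proposition~\ref{lochtbd} gets around this by first using Lemma~\ref{L103} to eliminate $f(z)$ entirely: with $a_{\sigma,\tau}$ the constant from that lemma, one rewrites
\[
\varphi_{i,v}(z)=\log\frac{\max_{\tau\in\Sigma_i}|a_{\sigma,\tau}|_v\,|\tau'(z)|_v}{\max_{\tau'\in\Sigma_i'}|\tau'(z)|_v}\le C_v+\log\frac{\max_{\tau\in\Sigma_i}|\tau'(z)|_v}{\max_{\tau'\in\Sigma_i'}|\tau'(z)|_v},
\]
and then the distance bound comes from the fact that \emph{every} $\tau'$ with $\tau\in\Sigma_i$ vanishes on $\Ind_f$ while some member of $\Sigma_i'$ does not. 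This brings up the second problem: you impose the ``divisor not supported on an $f^{-1}$-exceptional curve'' condition only on the reference section $\sigma_i$, but Proposition~\ref{P101}(2) must be applied to every $\tau\in\Sigma_i$, so the condition needs to hold for the entire spanning set (as the paper requires); this is arrangeable because the bad sections form finitely many proper linear subspaces of $H^0(X,B_i)$. Finally, you fix $\sigma_i$ once and for all, but the local height $h_{\sigma_i,\Sigma_i,v}(f^k(y))$ is undefined whenever $\sigma_i(f^k(y))=0$; you need to let the reference section $\sigma_{i,k}\in\Sigma_i$ vary with $k$ so that $\sigma_{i,k}(f^k(y))\ne 0$, and take the constants $C_v,D_v$ uniform over the finitely many choices of $\sigma\in\Sigma_i$.
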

Here $\Dist_v$ is a distance on $X(K_v)$ as in~\S\ref{S102}. By making finite field extensions,
the estimate~($\mathrm{BD}_v$) holds for all points $y\in X(\bar K)$ with forward orbit disjoint from $\Ind_f$.

For the proof of Theorem~\ref{T101} we may and will replace $K$ by a finite extension inside 
$\bar K$ such that ``everything in sight'' is defined over $K$, notably all curves in $\cC^\pm$, 
the morphisms $\pi:Y\to X$ and $g:Y\to X$, etc.

\subsection{Growth of certain Weil heights}\label{birheightfacts}
\begin{lemma}\label{periodichtbd}
  For \(C \in \cCper\) the function $h_{[C]}\circ f-h_{f^*[C]}$ is 
  bounded on $(X\smallsetminus\Ind_f)(\bK)$.
\end{lemma}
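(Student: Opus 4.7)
The plan is to leverage the rigid structure of periodic base curves, which are disjoint from all indeterminacy and ramification, to reduce the statement to the functoriality of Weil heights under morphisms.

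First, I would invoke Theorem~\ref{basecurvesmain}(1) (equivalently, Corollary~\ref{C201}) to identify $f^*[C]$. For $C\in\cCper$, the divisor $f^*C$ is a prime curve $C_1\in\cCper$ with $f(C_1)=C$, and both $C$ and $C_1$ are disjoint from $\Ind_{f^n}\cup R_{f^n}$ for every $n\in\Z$; in particular, neither meets $\Ind_f$ or $\Ind_{f^{-1}}$.

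Next, I would pass to the resolution $Y$, with birational morphisms $\pi,g\colon Y\to X$ satisfying $g=f\circ\pi$, chosen so that $\pi$ is an isomorphism over $X\smallsetminus\Ind_f$ and $g$ is an isomorphism over $X\smallsetminus\Ind_{f^{-1}}$. The crux is the divisorial identity $g^*C=\pi^*C_1$ on $Y$. Since $C\cap\Ind_{f^{-1}}=\emptyset$, no $g$-exceptional component can appear in $g^*C$, so $g^*C$ equals the strict transform $\widetilde C$; similarly, since $C_1\cap\Ind_f=\emptyset$, we have $\pi^*C_1=\widetilde{C_1}$. On the open set $\pi^{-1}(X\smallsetminus\Ind_f)$, the relation $g=f\circ\pi$ yields
\begin{equation*}
g^{-1}(C)\cap\pi^{-1}(X\smallsetminus\Ind_f)
=\pi^{-1}\bigl(f^{-1}(C)\cap(X\smallsetminus\Ind_f)\bigr)
=\pi^{-1}(C_1),
\end{equation*}
so the two irreducible strict transforms agree on a dense open set and hence coincide.

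Finally, taking line bundle classes gives $g^*[C]=\pi^*[C_1]=\pi^*f^*[C]$ in $\Pic(Y)$. The functoriality of Weil heights under morphisms (recalled in~\S\ref{BGheights}) then yields
\begin{equation*}
h_{[C]}\circ g - h_{f^*[C]}\circ\pi
= h_{g^*[C]} - h_{\pi^*f^*[C]} + O(1)
= O(1)
\end{equation*}
on $Y(\bK)$. For $x\in(X\smallsetminus\Ind_f)(\bK)$, the fiber $\pi^{-1}(x)$ is a single point $y$ with $\pi(y)=x$ and $g(y)=f(\pi(y))=f(x)$; evaluating at $y$ yields the desired boundedness of $h_{[C]}\circ f-h_{f^*[C]}$ on $(X\smallsetminus\Ind_f)(\bK)$. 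The only real subtlety lies in the pullback identity $g^*C=\pi^*C_1$, but this hinges only on the strong disjointness properties of periodic base curves from all relevant indeterminacy sets.
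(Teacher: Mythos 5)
Your proof is correct and follows essentially the same route as the paper: identify $f^*C$ as a periodic curve $C_1\in\cCper$, use the disjointness of $C$ from $\Ind_{f^{-1}}$ and of $C_1$ from $\Ind_f$ to obtain the divisorial identity $g^*C=\pi^*C_1$ on the resolution $Y$, and then conclude by functoriality of Weil heights. The only difference is that you spell out the justification of $g^*C=\pi^*C_1$ (no exceptional components, strict transforms agreeing on a dense open set) in more detail than the paper, which simply asserts it from the disjointness; this is a sound elaboration, not a different approach.
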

\begin{proof} 
  By Theorem~\ref{basecurvesmain}, \(f^*C=C'\) for some $C'\in\cCper$. Further, 
  \(C' \cap \Ind_f = C\cap\Ind_{f^{-1}}=\emptyset\); hence
  \(g^*C=\pi^*C'\).
  For $x\in(X\setminus\Ind_f)(\bK)$ there exists a unique $y\in Y(\bK)$ with $\pi(y)=x$
  and $g(y)=f(x)$. Since Weil heights are functorial with respect to morphisms, we get
  \begin{equation*}
    h_{[C]}(f(x))
    =h_{[C]}(g(y))
    =h_{g^*[C]}(y)+O(1)\\
    =h_{\pi^*[C']}(y)+O(1)
    =h_{[C']}(\pi(y))+O(1)
    =h_{[C']}(x)+O(1),
  \end{equation*}
  which completes the proof.
\end{proof}
The following result is a special case of Proposition 21 and Remark 23 in~\cite{KS}.
\begin{lemma}\label{nefhtbd}
  If  $L\in\Pic(X)_{\R}$ is nef,
  then $h_L\circ f-h_{f^*L}$ is bounded above on $(X\setminus\Ind_f)(\bK)$.
\end{lemma}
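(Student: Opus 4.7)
The plan is to lift to the resolution $(Y,\pi,g)$ from~\S\ref{BGactions} and combine the negativity lemma~\eqref{e102} with the functoriality of Weil heights and the lower-bound statement in Proposition~\ref{heightbounds}.

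First, since $L\in\Pic(X)_\R$ is nef, the negativity lemma~\eqref{e102} shows that the class $\pi^*f^*L-g^*L$ is represented by an effective $\pi$-exceptional $\R$-divisor $E$ on $Y$. Second, functoriality of Weil heights for the morphisms $\pi,g:Y\to X$ gives
\begin{equation*}
  h_{\pi^*f^*L}=h_{f^*L}\circ\pi+O(1)
  \quad\text{and}\quad
  h_{g^*L}=h_L\circ g+O(1)
\end{equation*}
on $Y(\bK)$. Subtracting and using the linearity of $L\mapsto h_L$ on $\Pic(X)_\R$ modulo bounded functions yields
\begin{equation*}
  h_{f^*L}\circ\pi-h_L\circ g=h_{[E]}+O(1)
\end{equation*}
on $Y(\bK)$.

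Third, for any $x\in(X\smallsetminus\Ind_f)(\bK)$, let $y\in Y(\bK)$ be the unique point with $\pi(y)=x$; since $\pi$ is an isomorphism above $X\smallsetminus\Ind_f$, we also have $g(y)=f(x)$, and $y$ lies outside every $\pi$-exceptional curve. In particular $y\notin\Supp(E)$, so Proposition~\ref{heightbounds} applied to the effective $\R$-divisor $E$ gives a constant $C$ with $h_{[E]}(y)\ge -C$. Substituting into the identity above yields $h_{f^*L}(x)-h_L(f(x))\ge -C+O(1)$, which is the desired upper bound on $h_L\circ f-h_{f^*L}$.

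The argument is essentially bookkeeping once one identifies the right decomposition, so there is no real obstacle; the only point worth flagging is that~\eqref{e102} is available for arbitrary nef classes in $\Pic(X)_\R$ (not merely integral classes), and Proposition~\ref{heightbounds} likewise applies to effective $\R$-divisors, so no approximation argument is needed to pass from $\Z$-coefficients to $\R$-coefficients.
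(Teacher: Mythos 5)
Your proof is correct and follows essentially the same route as the paper: lift to the resolution $(Y,\pi,g)$, apply the negativity lemma~\eqref{e102} to realize $\pi^*f^*L-g^*L$ as an effective $\pi$-exceptional $\R$-divisor, invoke Proposition~\ref{heightbounds} for the lower bound off the support, and transfer back via functoriality of Weil heights. (Incidentally, your signs are kept straight throughout, whereas the paper's wording ``bounded above'' for $h_{\pi^*f^*L}-h_{g^*[L]}$ appears to be a slip for ``bounded below''.)
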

\begin{proof} 
  Since $L$ is nef, we have \(g^*L \leq \pi^*f^*L\) by~\eqref{e102}, and
  \(\pi^*f^*L-g^*L\) is the class of an effective $\pi$-exceptional divisor.
  Proposition~\ref{heightbounds} shows that 
  $h_{\pi^*f^*L}-h_{g^*[L]}$ is bounded above on 
   $(Y\smallsetminus\Supp(R_\pi))(\bK)$.
   The result now follows from the functoriality of Weil heights with respect to morphisms,
   as in the proof of Lemma~\ref{periodichtbd}.
\end{proof}

\subsection{Growth of certain local heights}\label{dynlocheights}
Consider a line bundle $B\in\Pic(X)$ satisfying condition~(1) in Proposition~\ref{newdecomp},
that is, $B$ and $f^*B$ are base point free and $(B\cdot f(x))>0$ for all $x\in\Ind_f$.

Let $\Sigma\subset H^0(X,B)$ and $\Sigma'\subset H^0(X,f^*B)$ be 
finite spanning sets of nonzero global sections such that: $\tau'\in\Sigma'$ for every
$\tau\in\Sigma$, for the operation $\tau\mapsto\tau'$ described in Proposition~\ref{P101}; and for each $\tau \in \Sigma$, $\Div(\tau)$ is not supported on any $f^{-1}$-exceptional curve.
Fix a reference section \(\sigma \in H^0(X,B)\), with associated section $\sigma'\in H^0(X,f^*B)$. 

\begin{proposition}\label{lochtbd}
  For each $v\in M_K$ there exist
  $C_v=C_{\sigma,\Sigma,\Sigma',v}\in\R$ and
  $D_v=D_{\sigma,\Sigma,\Sigma',v}\in\R$
  such that 
  \begin{equation*}
    h_{\sigma,\Sigma,v}(f(x))-h_{\sigma',\Sigma',v}(x)
    \le C_v + \min\{0,\log\Dist_v(x,\Ind_f)+D_v\}
  \end{equation*}
  for every $x \in X(K_v)$ such that $x\not\in\Ind_f$ and $\sigma(f(x))\ne0$. 
  Moreover, there is a finite set \(S=S_{\sigma,\Sigma} \subseteq M_K\) such that 
  $C_v=0$ for all $v\not\in S_\sigma$.
\end{proposition}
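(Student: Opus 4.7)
The plan is to use Lemma~\ref{L103} to rewrite $h_{\sigma,\Sigma,v}(f(x))$ in terms of sections evaluated at $x$ rather than at $f(x)$. For each $\tau \in \Sigma$, Lemma~\ref{L103} gives a nonzero constant $c_\tau \in K^*$ (with $c_\sigma = 1$) such that $\tau(f(x))/\sigma(f(x)) = c_\tau \cdot \tau'(x)/\sigma'(x)$ wherever both sides make sense. Substituting into the defining formulas and cancelling the $|\sigma'(x)|_v$ factors gives the identity
\[
h_{\sigma,\Sigma,v}(f(x)) - h_{\sigma',\Sigma',v}(x) = \log \frac{\max_{\tau \in \Sigma} |c_\tau|_v \, |\tau'(x)|_v}{\max_{\tilde\tau' \in \Sigma'} |\tilde\tau'(x)|_v},
\]
and the remaining task is to bound this ratio in two complementary ways.

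Since $\{\tau' : \tau \in \Sigma\} \subseteq \Sigma'$ by construction, the numerator is dominated by $\max_\tau |c_\tau|_v$ times the denominator, yielding the crude bound $C_v := \log \max_\tau |c_\tau|_v$. Because each $c_\tau$ lies in $K^*$ and $c_\sigma = 1$ forces the maximum to be at least $1$ at every non-archimedean place, one has $C_v = 0$ for all $v$ outside a finite set $S$ (the archimedean places together with the finitely many non-archimedean places where some $|c_\tau|_v > 1$).

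For the second bound, the key claim to establish is that each $\tau'$ vanishes at every $y \in \Ind_f$. This is where the hypotheses on $\Sigma$ and $B$ enter: for $y \in \Ind_f$, the assumption $(B \cdot f(y)) > 0$ together with the condition that $\Div(\tau)$ is not supported on any $f^{-1}$-exceptional curve triggers Proposition~\ref{P101}(2), giving $\tau'(y) = 0$. Applying Lemma~\ref{L102} with $L = f^*B$, spanning set $\Sigma'$, section $\tau'$, and vanishing point $y$, produces constants $D_v^{\tau, y}$ satisfying
\[
\log \frac{|\tau'(x)|_v}{\max_{\tilde\tau' \in \Sigma'} |\tilde\tau'(x)|_v} \le \log \Dist_v(x, y) + D_v^{\tau, y}.
\]
Setting $D_v := \max_{\tau \in \Sigma,\, y \in \Ind_f} D_v^{\tau, y}$ and specializing $y$ to be the point of $\Ind_f$ closest to $x$ (so that $\Dist_v(x, y) = \Dist_v(x, \Ind_f)$), I combine this with the identity above to conclude $h_{\sigma,\Sigma,v}(f(x)) - h_{\sigma',\Sigma',v}(x) \le C_v + \log \Dist_v(x, \Ind_f) + D_v$. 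The minimum of the two bounds is the desired inequality.

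The only delicate point is verifying the vanishing $\tau'(y) = 0$ on $\Ind_f$, which is precisely what the hypotheses on $\Sigma$ and $B$ were tailored to ensure via Proposition~\ref{P101}(2); everything else reduces to routine bookkeeping that combines Lemmas~\ref{L103} and~\ref{L102} with the definition of local heights.
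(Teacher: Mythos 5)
Your argument follows the paper's own proof essentially step by step: you pass from $f(x)$ to $x$ using the constants from Lemma~\ref{L103} (the paper's $a_{\sigma,\tau}$), use $\{\tau':\tau\in\Sigma\}\subseteq\Sigma'$ for the crude bound $C_v$, invoke Proposition~\ref{P101}(2) to get vanishing of each $\tau'$ on $\Ind_f$, and then apply Lemma~\ref{L102} for the distance bound. The one small blemish is the phrase ``$c_\sigma=1$ forces the maximum to be at least $1$,'' which tacitly assumes $\sigma\in\Sigma$ even though the proposition only fixes $\sigma$ as a reference section in $H^0(X,B)$; this is not needed, since each $c_\tau$ is a fixed element of $K^*$ and hence $|c_\tau|_v=1$ for all but finitely many $v$, giving $\max_\tau|c_\tau|_v=1$ and thus $C_v=0$ outside a finite set directly.
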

\begin{proof}
  For any $\tau\in\Sigma\subset H^0(X,B)$, Lemma~\ref{L103} shows that 
  the quantity
  \begin{equation*}
    a_{\sigma,\tau}
    :=\frac{\tau(f(x))/\sigma(f(x))}{\tau'(x)/\sigma'(x)}
  \end{equation*}
  is well defined and independent of $x$, as long as $x\not\in\Ind_f$,
  $\sigma(f(x))\ne 0$, and $\tau(f(x))\ne0$.
  Set
  \begin{equation*}
    C_v:=\max_{\tau\in\Sigma}\log|a_{\sigma,\tau}|_v
  \end{equation*}
  for $v\in M_K$. Then $C_v=0$ for all but finitely many $v$.
  Further, if $x\not\in\Ind_f$ and $\sigma(f(x))\ne 0$, then
  \begin{align*}
    h_{\sigma,\Sigma,v}(f(x))-h_{\sigma',\Sigma',v}(x)
    &=\log\max_{\tau\in\Sigma}\frac{|\tau(f(x))|_v}{|\sigma(f(x))|_v}
    -\log\max_{\tau'\in\Sigma'}\frac{|\tau'(x)|_v}{|\sigma'(x)|_v}\\
    &=\log\max_{\tau\in\Sigma}|a_{\sigma,\tau}|_v\cdot\frac{|\tau'(x)|_v}{|\sigma'(x)|_v}
    -\log\max_{\tau'\in\Sigma'}\frac{|\tau'(x)|_v}{|\sigma'(x)|_v}\\
    &\le C_v+\log\max_{\tau\in\Sigma}\frac{|\tau'(x)|_v}{|\sigma'(x)|_v}
   -\log\max_{\tau'\in\Sigma'}\frac{|\tau'(x)|_v}{|\sigma'(x)|_v}\\
    &=C_v+\log\frac{\max_{\tau\in\Sigma}|\tau'(x)|_v}{\max_{\tau'\in\Sigma'}|\tau'(x)|_v}.
  \end{align*}
  The right hand side is clearly bounded above by $C_v$.
  Further, it follows from (2) in Proposition~\ref{P101} that, for any $y\in\Ind_f$, 
  the sections $\tau'$, for $\tau\in\Sigma$, all vanish at $y$,
  whereas at least one of the sections in $\Sigma'$ does not vanish there.
  Then Lemma~\ref{L102} shows that the 
  right hand side is bounded above by 
  $C_v+\log\Dist_v(x,\Ind_f)+D_v$, completing the proof.
\end{proof}

\subsection{Proof of Theorem~\ref{T101}}
Pick any $y\in X(K)$ with well-defined forward orbit.
If the Zariski closure of the orbit is disjoint from $\Ind_f$, then~\eqref{e103} is trivial. 
By Lemma~\ref{pointorbits} we therefore assume that,
for $n\gg0$, $f^n(y)$ does not lie on any of the curves in $\cC^+$.
Now~\eqref{e103} holds for $f^n(y)$ iff it holds for $y$, so
we may assume the forward orbit of $y$ does not intersect any of the curves in $\cC^+$.

First we use the decomposition $\theta^+=A+[D]$  
from Proposition~\ref{Kawdecomp}. 
Since $A$ is ample and the forward orbit of $y$ is disjoint from the support of $D$,
Proposition~\ref{heightbounds} implies that 
$h_{\theta^+}$ is bounded below along the forward orbit of $y$. Thus 
\begin{equation*}
  h^+(n) := \lambda^{-n}h_{\theta^+}(f^n(y)) - h_{\theta^+}(y)
\end{equation*}
is uniformly (in $n$) bounded below.

Next we use the decomposition  $\theta^+=\sum_{i=1}^p r_i B_i + L + [\Dper]$ from 
Proposition~\ref{newdecomp}. Write $B:=\sum_{i=1}^pr_iB_i$.
For \(k \in \N\), let
\begin{align*}
  \alpha_k &:= h_B(f^k(y)) - h_{f^*B}(f^{k-1}(y))\\
  \beta_k &:= h_L(f^k(y)) - h_{f^*L}(f^{k-1}(y))\\
  \gamma_k &:= h_{[\Dper]}(f^k(y)) - h_{f^*[\Dper]}(f^{k-1}(y))\\
  \delta_k &:= h_{f^*\theta^+}(f^{k-1}(y)) - \lambda h_{\theta^+}(f^{k-1}(y)).
\end{align*}
So
\begin{equation*}
  h^+(n) 
  = \sum_{k=1}^n \lambda^{-k}\alpha_k 
  + \sum_{k=1}^n \lambda^{-k}\beta_k 
  + \sum_{k=1}^n \lambda^{-k}\gamma_k 
  + \sum_{k=1}^n \lambda^{-k}\delta_k
\end{equation*}
for every \(n \in \N\). 

First, since \(f^*\theta^+=\lambda \theta^+\), there is a uniform (in \(k\)) upper bound on \(\delta_k\).
Second, since $\Dper$ is supported on curves in $\cCper$, 
Lemma~\ref{periodichtbd} gives a uniform (in \(k\)) upper bound on \(\gamma_k\). 
Third, Lemma~\ref{nefhtbd} gives a uniform (in \(k\)) upper bound on \(\beta_k\). 
Thus there must be a uniform (in \(n\)) lower bound on
\[\sum_{k=1}^n \lambda^{-k}\alpha_k.\]

We use notation as in~\S\ref{dynlocheights}.
For each $i$, pick a basis $\Sigma_i$ of \(H^0(X,B_i)\) and a 
spanning set $\Sigma'_i$ of \(H^0(X,f^*B_i)\) with the properties stated in~\S\ref{dynlocheights}. 
For each \(k \in \N\), pick $\sigma_{i,k}\in\Sigma_i$
such that \(\sigma_{i,k}(f^k(y)) \neq 0\). 
Then $\sigma'_{i,k}(f^{k-1}(y))\ne0$ and 
\begin{equation*}
  \alpha_k 
  = \sum_{i=1}^pr_i
  \sum_{v \in M_K}
  \left(
    h_{\sigma_{i,k},\Sigma_i,v}(f^k(y))
    -h_{\sigma'_{i,k},\Sigma'_i,v}(f^{k-1}(y))
  \right)
\end{equation*}
for every \(k \in \N\).
Proposition~\ref{lochtbd} gives a uniform (in \(k\)) upper bound on
\begin{equation*}
  \alpha'_k 
  = \sum_{i=1}^pr_i
  \sum_{w\ne v}
  \left(
    h_{\sigma_{i,k},\Sigma_i,w}(f^k(y))
    -h_{\sigma'_{i,k},\Sigma'_i,w}(f^{k-1}(y))
  \right);
\end{equation*}
indeed, the inner sum is non-positive for all but finitely many $w$.
Thus there must be a uniform (in \(n\)) lower bound on
\begin{equation*}
  \sum_{k=1}^n \lambda^{-k}(\alpha_k-\alpha_k') 
  = \sum_{k=1}^n \lambda^{-k} 
  \sum_{i=1}^pr_i
  \left(
    h_{\sigma_{i,k},\Sigma_i,v}(f^k(y))
    -h_{\sigma'_{i,k},\Sigma'_i,v}(f^{k-1}(y))
  \right)
\end{equation*}
It then follows from Proposition~\ref{lochtbd} that
\begin{equation*}
  \sum_{k \geq 0} \lambda^{-k} \log \Dist_v \big( f^k(y),\Ind_f \big) > -\infty,
\end{equation*}
completing the proof.

\subsection{Proof of Theorem~\ref{HeightThm}}\label{PfHeightThm}
Let $f_{\bK}: X_{\bK} \dashrightarrow X_{\bK}$ be the base change to $\bK$. Since we are only concerned with the birational conjugacy class,
we can assume \(f_{\bK}\) satisfies (AS1) and (AS2). 
Since $\theta^+$ is nef, Lemma~\ref{nefhtbd} gives
\begin{equation*}
  h_{\theta^+}\circ f \leq \lambda h_{\theta^+} + O(1)
\end{equation*}
on $(X\smallsetminus\Ind_f)(\bK)$. 
It then follows from a standard dynamical argument (e.g., the proof of
Proposition 1.2 in~\cite{CS}) that 
\(\hh^+(x):=\lim_{n\to\infty}\lambda^{-n}h_{\theta^+}(f^n(x))\) is
well-defined and contained in $[-\infty,\infty)$ for all \(x \in
X(\bK)\) with well-defined forward orbit. 

To prove that $\hh^+$ is nonnegative we use the decomposition $\theta^+=A+[D]$ from
Propositions~\ref{Kawdecomp}. Here $A$ is ample and $D$ is an effective
$\R$-divisor supported on the curves in $\cC^+$, so $h_{\theta^+}$ is
bounded below on points in $X(\bK)$ that do not lie on the curves in $\cC^+$.

Pick $x\in X(\bK)$ with well-defined forward orbit. If, for $n\gg0$,
$f^n(x)$ does not lie on any of the curves in $\cC^+$, then it is
clear that $\hh^+(x)\ge 0$. 
If $x$ is preperiodic, then \(\hh^+(x)=0\).  
By Lemma~\ref{pointorbits} the only remaining case is when
the Zariski closure $Z$ of the forward orbit of $x$ is a cycle of curves in \(\cCper\). But $f$ is an isomorphism of $Z$, so for any 
$\alpha\in\Pic(Z)_\R$ and any $x\in Z(\bK)$, $|h_\alpha(f^n(x))|$ grows at most linearly in $n$.  
The functoriality of Weil heights now gives 
\begin{equation*}
  \hh^+(x)
  =\lim_{n\to\infty}\lambda^{-n}h_{\theta^+}(f^n(x)) 
  =\lim_{n\to\infty}\lambda^{-n}h_{\theta^+|_{Z}}(f^n(x))
  =0.
\end{equation*}
This shows that $\hh^+$ is well-defined and
nonnegative. We obviously have 
$\hh^+\circ f=\lambda\hh^+$.
Lemma~\ref{pointorbits} and the preceding argument further show that $\hh^+(x)=0$
for every point $x\in X(\bK)$ such that the Zariski closure of the
forward orbit of $x$ does not intersect $\Ind_f$.

Now suppose $\hh^+\equiv 0$ on $X(\bK)$. We will show that 
$\lambda^{-n}h_A\circ f^n\to 0$ on $X(\bK)$ for every ample $A\in\Pic(X)_\R$.
If suffices to do this for one choice of $A$. Indeed, if $A$ and $B$
are ample, then there exists a constant $R>0$ such that $h_B\le
Rh_A+O(1)$. We may therefore pick $A$ as in the decomposition 
$\theta^+=A+[D]$ as above.
Pick any $x\in X(\bK)$ with well-defined forward orbit. 
Since $A$ is ample, $h_A$ is bounded below along the orbit of $x$.

If the orbit of $x$ has at most finite intersection with the support
of $D$, then $h_{[D]}$ is also bounded below along the orbit. 
Since $\lim_{n\to\infty}\lambda^{-n}h_{\theta^+}(f^n(x))=0$,
this implies $\lim_{n\to\infty}\lambda^{-n}h_A(f^n(x))=0$.
The latter equality trivially holds also if $x$ is preperiodic.

Finally, suppose the Zariski closure $Z$ of the orbit of $x$ is a cycle of curves in $\cCper$. Again since the $f$ is an
isomorphism of $Z$, the argument above shows that 
$\lim_{n\to\infty}\lambda^{-n}h_A(f^n(x))=0$.

A conjecture of Silverman (see~\cite[Conjecture~3, p.650]{Sil14}) would
imply that $\hh^+(x)>0$ for all points $x\in X(\bar K)$ with Zariski
dense orbit. The existence of such points was proved by 
Xie~\cite[Theorem~1.4]{XiePerpts}, using a result of Amerik~\cite{Amerik}.

\section{Complex dynamics}\label{S204}
\subsection{Proof of Theorem~\ref{MainThm}}\label{PfMainThm}
Our situation is as follows. 
We have a number field $K$ together with a fixed embedding \(K \hookrightarrow \C\). 
We also have a smooth projective surface $X$ and a birational selfmap $f:X \dashrightarrow X$, both 
defined over $K$. Let $f_\C:X_\C \dashrightarrow X_\C$ be the base change and assume that $f_\C$ has
dynamical degree $\lambda>1$.  We then want to show that $f_\C$ is birationally conjugate to 
a birational selfmap of a smooth complex projective surface satisfying the Bedford-Diller condition.

If $f_\C$ is birationally conjugate to a complex surface automorphism, then the Bedford-Diller energy condition is trivially satisfied (since automorphisms have no indeterminacy points). We may therefore assume $f_\C$ is strictly birational, in the sense that no such conjugation exists. 
Table 1 in~\cite{DF} shows that \(X_\C\), and hence $X$, must be a rational surface in this case. 

Extend the given embedding $K\subset\C$ to an embedding $\bK\subset\C$ 
of an algebraic closure of $K$. 
By~\cite[Theorem~0.1]{DF} we can make a finite number of blowups on $X_{\bK}$ 
such that the lift of $f_{\bK}$ satisfies~(AS1). Replacing $K$ by a finite extension inside $\bar K$, we may assume that the blowups are defined over $K$. We are therefore reduced to the case when $f_{\bK}$ satisfies~(AS1). 
Similarly, by~\cite[Proposition~4.1]{BD-A}, we can blow down curves on $X_{\bK}$ so that the induced selfmap satisfies both~(AS1) and~(AS2). After a finite field extension, we are then reduced to the case when $f_{\bK}$ itself satisfies both~(AS1) and~(AS2).

Let $\theta^\pm\in\Pic(X_{\bK})_\R$ be the invariant classes. We may assume $(\theta^+\cdot\theta^+)>0$, or else $f_{\bK}$, and hence $f_\C$,  is conjugate to an automorphism. We are now in position to invoke Theorem~\ref{T101} and conclude the proof.

\subsection{Proof of Corollary~\ref{MainCorCurr}}\label{PfMainCorCurr}

There is a birational morphism \(\rho:X \rightarrow X'\) such that
\(f' := \rho \circ f \circ \rho^{-1}\) satisfies (AS1) and (AS2). By
Theorem~\ref{MainThm}, \(f'\) has finite dynamical energy, as defined
in~\cite{DDG2}. By the main Theorem in~\cite{BD-A} and Proposition 2.2 and Theorem 2 in~\cite{DDG2}, the currents $T^{\pm}(f')$ are laminar and do not charge any pluripolar set, and the $f'$-invariant measure $\mu(f'):=T^+(f') \wedge T^-(f')$ is a geometric intersection. Proposition 2.8 in~\cite{BD-A} shows that \(\rho_*T^{\pm} = T^{\pm}(f')\), while Theorem 1 in~\cite{Fav-B} shows that neither \(T^+\) nor \(T^-\) charges any curve in the support of \(R_\rho\). It follows that the desired conclusions hold for \(T^{\pm}\) and \(\mu\).

\subsection{Proof of Corollary~\ref{MainCorMeas}}\label{PfMainCorMeas}

There is a birational morphism \(\pi:Y \rightarrow X\) such that \(g := \pi^{-1} \circ f \circ \pi\) satisfies (AS1), and there is a birational morphism \(\rho:Y \rightarrow Y'\) such that \(g' := \rho \circ g \circ \rho^{-1}\) satisfies (AS1) and (AS2). Take \(\theta^{\pm}(g)\) to be scaled so that \((\theta^+(g) \cdot \theta^-(g))=1\); taking \(\theta^{\pm}(g') = \rho_*\theta^{\pm}(g)\), it follows that \((\theta^+(g') \cdot \theta^-(g'))=1\) also. Set \(\nu' := T^+(g') \wedge T^-(g')\), let \(\nu\) be the strict transform of \(\nu'\) under \(\rho\) (so that \(\rho_*\nu = \nu'\)), and set \(\mu := \pi_*\nu\).

By Corollary~\ref{MainCorCurr}, \(\nu\) does not charge any pluripolar set, so both \(\nu\) and \(\mu\) are probability measures satisfying (1). The main Theorem in~\cite{BD-A} shows that \(\nu'\) satisfies (2); thus \(\nu\) and \(\mu\) also satisfy (2).

By Th\'eor\`eme 1 in~\cite{DS}, \(h_{\rm{top}}(f) \leq \lambda\). As noted in \S 4.2 in~\cite{Duj06}, we also have \(h_\mu(f) \leq h_{\rm{top}}(f)\). By Theorems 1 and 2 in~\cite{Duj06}, \(h_{\nu'}(g')=\log(\lambda)\), and thus (3) holds for \(\nu'\), \(\nu\) and \(\mu\). By Theorem 2 (see also Theorem 5.4) in~\cite{Duj06}, (4) holds for \(\nu'\), and hence also for \(\nu\) and \(\mu\), since these measures do not charge any curve.


\begin{thebibliography}{DDG11}

\bibitem[Ame11]{Amerik}
E.~Amerik.
\newblock \emph{Existence of non-preperiodic algebraic points for a
  rational self-map of infinite order}.
\newblock Math. Res. Lett. \textbf{18} (2011), 251--256.

\bibitem[Bed03]{Bed03}
E. Bedford.
\newblock \emph{On the dynamics of birational mappings of the plane}.
\newblock J. Korean Math. Soc. \textbf{40} (2003), 373--390.

\bibitem[BD05]{BD-A}
E. Bedford and J. Diller.
\newblock \emph{Energy and invariant measures for birational surface maps}.
\newblock Duke Math. J. \textbf{128} (2005), 331--368.

\bibitem[BD06]{BD-B}
E. Bedford and J. Diller.
\newblock \emph{Dynamics of a two parameter family of plane birational maps: maximal entropy}.
\newblock J. Geom. Anal. \textbf{16} (2006), 409--430.

\bibitem[BLS93a]{BLS1}
E. Bedford, M. Lyubich, and J. Smillie.
\newblock \emph{Polynomial diffeomorphisms of $\mathbf{C}^2$ {IV}: the measure of
  maximal entropy and laminar currents}.
\newblock Invent. Math. \textbf{112} (1993), 77--125.

\bibitem[BLS93b]{BLS2}
E. Bedford, M. Lyubich, and J. Smillie.
\newblock \emph{Distribution of periodic points of polynomial diffeomorphisms of $\mathbf{C}^2$}.
\newblock Invent. Math. \textbf{114} (1993), 277--288.

\bibitem[BS91a]{BS1}
E. Bedford and J. Smillie.
\newblock \emph{Polynomial diffeomorphisms of $\mathbf{C}^2$: currents, equilibrium
  measure and hyperbolicity}.
\newblock Invent. Math. \textbf{103} (1991) 69--99.

\bibitem[BS91b]{BS2}
E. Bedford and J. Smillie.
\newblock \emph{Polynomial diffeomorphisms of $\mathbf{C}^2$ {II}: stable manifolds
  and recurrence}.
\newblock J. Amer. Math. Soc. \textbf{4} (1991), 657--679.

\bibitem[BS92]{BS3}
E. Bedford and J. Smillie.
\newblock \emph{Polynomial diffeomorphisms of $\mathbf{C}^2$ {III}: ergodicity,
  exponents and entropy of the equilibrium measure}.
\newblock Math. Ann.\ \textbf{294} (1992), 395--420.

\bibitem[BG06]{BG}
E. Bombieri and W. Gubler.
\newblock \emph{Heights in Diophantine Geometry}.
\newblock Cambridge University Press, 2006.

\bibitem[BFJ08]{deggrowth}
S.~Boucksom, C.~Favre and M.~Jonsson.
\newblock \emph{Degree growth of meromorphic surface maps}.
\newblock Duke Math. J. \textbf{141} (2008), 519--538.

\bibitem[Buf11]{Buff}
X. Buff.
\newblock \emph{Courants dynamiques pluripolaires}.
\newblock Ann. Fac. Sci. Toulouse, Math. \textbf{20} (2011), 203--214.

\bibitem[Can01]{Can-B}
S. Cantat.
\newblock \emph{Dynamique des automorphismes des surfaces K3}.
\newblock Acta Math. \textbf{187} (2001), 1--57.

\bibitem[CS93]{CS}
G. Call and J. Silverman.
\newblock \emph{Canonical heights on varieties with morphisms}.
\newblock Compos. Math. \textbf{89} (1993), 163--205.

\bibitem[dTV10]{dTV10}
H.~De~Th\'elin and G. Vigny.
\newblock \emph{Entropy of meromorphic maps and dynamics of birational maps}.
\newblock M\'em. Soc. Math. Fr. (N.S.) \textbf{122} (2010).

\bibitem[Dil96]{DillerP2}
J.  Diller.
\newblock \emph{Dynamics of birational maps of $\mathbb{P}^2$}.
\newblock Indiana Univ. Math. J. \textbf{45} (1996) 721--772.

\bibitem[DDG11]{DDG2}
J.  Diller, R. Dujardin, and V. Guedj.
\newblock \emph{Dynamics of meromorphic maps {II}: energy and invariant measure}.
\newblock Comment. Math. Helv. \textbf{86} (2011), 277--316.

\bibitem[DF01]{DF}
J. Diller and C. Favre.
\newblock \emph{Dynamics of bimeromorphic maps of surfaces}.
\newblock Amer. J. Math. \textbf{123} (2001), 1135--1169.

\bibitem[DJS07]{DJS}
J. Diller, D. Jackson and A. Sommese.
\newblock \emph{Invariant curves for birational surface maps}.
\newblock Trans. Amer. Math. Soc. \textbf{359} (2007), 2973--2991.

\bibitem[DS05]{DS}
T.-C. Dinh and N. Sibony
\newblock \emph{Une borne sup\'erieure pour l'entropie topologique d'une application rationelle}.
\newblock Ann. Math. \textbf{161} (2005), 1637--1644.

\bibitem[Duj04]{Duj04}
R. Dujardin.
\newblock \emph{Laminar currents in $\mathbb{P}^2$}.
\newblock Math. Ann. \textbf{325} (2003), 745--765. 

\bibitem[Duj06]{Duj06}
R. Dujardin.
\newblock \emph{Laminar currents and birational dynamics}.
\newblock Duke Math. J. \textbf{131} (2001), 219--247.

\bibitem[Fav98]{Fav-A}
C. Favre.
\newblock \emph{Points p{\'e}riodiques d'applications birationelles de $\mathbb{P}^2$}.
\newblock Ann. Inst. Fourier \textbf{48} (1998), 999--1023.

\bibitem[Fav00]{Fav-B}
C. Favre.
\newblock \emph{Multiplicity of holomorphic functions}.
\newblock Math. Ann. \textbf{316} (2000), 355--378.

\bibitem[Fri91]{Friedland}
S. Friedland. 
\newblock \emph{Entropy of polynomial and rational maps}.
\newblock  Ann.\ of Math. \textbf{133} (1991), 359--368.

\bibitem[JW12]{JW}
M. Jonsson and E. Wulcan.
\newblock \emph{Canonical heights for plane polynomial maps of small topological degree}.
\newblock Math. Res. Lett. \textbf{19} (2012), 1207--1217.

\bibitem[Kaw06]{Kaw06}
S. Kawaguchi.
\newblock \emph{Canonical height functions for affine plane automorphisms}.
\newblock Math. Ann. \textbf{335} (2006), 285--310.

\bibitem[Kaw08]{Kaw08}
S. Kawaguchi.
\newblock \emph{Projective surface automorphisms of positive entropy from an
  arithmetic viewpoint}.
\newblock Amer. J. Math. \textbf{130} (2008), 159--186.

\bibitem[Kaw13]{Kaw13}
S. Kawaguchi.
\newblock \emph{Local and global canonical height functions for affine space regular automorphisms}.
\newblock Algebra Number Theory \textbf{7} (2013), 1225--1252.

\bibitem[KS15]{KS}
S. Kawaguchi and J. Silverman. 
\newblock \emph{On the dynamical and arithmetic degrees of rational self-maps of
  algebraic varieties}.
\newblock \texttt{arXiv:1208.0815v3}. To appear in J. Reine Angew. Math.

\bibitem[Laz04]{Laz}
R.~Lazarsfeld. 
\newblock\emph{Positivity in algebraic geometry I}.
\newblock Ergebnisse der Mathematik und ihrer Grenzgebiete. 3. Folge, vol. 48.
\newblock Springer-Verlag, Berlin, 2004.

\bibitem[Mil06]{Milnor}
J.~Milnor.
\newblock\emph{Dynamics in one complex variable}, Third edition.
\newblock Ann.\ of Math.\ Stud., vol 160.
\newblock Princeton Univ. Press.
\newblock Princeton, NJ, 2006.

\bibitem[RS97]{RuSh}
A. Russakovskii and B. Shiffman.
\newblock \emph{Value distribution for sequences of rational mappings and complex dynamics}.
\newblock Indiana Univ. Math. J. \textbf{46} (1997), 897--932.

\bibitem[Sib99]{Sib99}
N.~Sibony.
\newblock \emph{Dynamique des applications rationnelles de $\mathbf{P}\sp k$}.
\newblock In \emph{Dynamique et g{\'e}om{\'e}trie complexes (Lyon, 1997)}, 
\newblock Panor. Synth{\`e}ses, 8, 97--185.
\newblock Soc. Math. France, Paris, 1999.

\bibitem[Sil91]{Silverman91}
J.~H.~Silverman.
\newblock \emph{Rational points on K3 surfaces: a new canonical height}.
\newblock Invent. Math. \textbf{105} (1991), 343--373.

\bibitem[Sil93]{Silverman93}
J.~H.~Silverman.
\newblock \emph{Integer points, Diophantine approximation and iteration of rational maps}.
\newblock Duke Math. J. \textbf{71} (1993), 793--829.

\bibitem[Sil94]{Silverman94}
J.~H.~Silverman.
\newblock \emph{Geometric and arithmetic properties of the H\'enon map}.
\newblock Math. Z. \textbf{215} (1994), 237--250.

\bibitem[Sil14]{Sil14}
J.~H.~Silverman.
\newblock \emph{Dynamical degrees, arithmetic entropy, and canonical heights 
  for dominant rational self-maps of projective space}.
\newblock Ergodic Theory Dynam. Systems. \textbf{34} (2014), 647--678.

\bibitem[Vig15]{Vig15}
G. Vigny.
\newblock \emph{Exponential decay of correlations for generic regular birational maps of $\mathbb{P}^k$}.
\newblock Math. Ann. \textbf{362} (2015), 1033--1054.

\bibitem[Xie11]{XiePerpts}
J. Xie.
\newblock \emph{Periodic points of birationals maps on projective surfaces}.
\newblock Duke Math. J. \textbf{164} (2015), 903--932.

\end{thebibliography}
\end{document}